\documentclass[11pt,reqno]{amsart}
\usepackage{fullpage,enumitem,colonequals}
\usepackage{mathrsfs} % for \mathscr (script letters)
\usepackage{amsmath,amsfonts,amsthm,amssymb,latexsym,amscd,amsopn,eucal,mathrsfs}

\usepackage[all]{xy} 

\usepackage[dvipsnames]{xcolor}

\usepackage[colorlinks]{hyperref}
\hypersetup{
    colorlinks,
    citecolor=blue,
    linkcolor=red,      
    urlcolor=cyan,
}

% This is for resizeable \Sha
\usepackage[OT2,T1]{fontenc}
\DeclareSymbolFont{cyrletters}{OT2}{wncyr}{m}{n}
\DeclareMathSymbol{\Sha}{\mathalpha}{cyrletters}{"58}

% Color comments!
\usepackage{color}

 % for defined terms

% Characters
\newcommand{\Aff}{\mathbb{A}}
\newcommand{\C}{\mathbb{C}}

\newcommand{\F}{\mathbb{F}}
\newcommand{\G}{\mathbb{G}}

\newcommand{\PP}{\mathbb{P}}
\newcommand{\Q}{\mathbb{Q}}
\newcommand{\R}{\mathbb{R}}
\newcommand{\Z}{\mathbb{Z}}

% mathcal characters

\newcommand{\calS}{\mathcal{S}}
\newcommand{\calT}{\mathcal{T}}

\newcommand{\Ntot}{N_{\mathrm{tot}}}
\newcommand{\Nloc}{N_{\mathrm{loc}}}

% Math operators

\DeclareMathOperator{\Br}{Br}

\DeclareMathOperator{\disc}{disc}

\DeclareMathOperator{\Gal}{Gal}

\DeclareMathOperator{\im}{im}

\DeclareMathOperator{\Pic}{Pic}

\DeclareMathOperator{\rank}{rank}

\DeclareMathOperator{\Sel}{Sel}

% Categories

% Text subscripts, superscripts

\newcommand{\et}{\text{\'et}}

\newcommand{\els}{{\operatorname{\#ELS}}}
\newcommand{\sol}{{\operatorname{\#soluble}}}

\newcommand{\PGL}{\operatorname{PGL}}

 % binary direct sum
 % direct sum of a collection

 % binary intersection
 % intersection of a collection

 % binary tensor product
 % tensor product of a collection
 % binary union
 % union of a collection

\newcommand{\ra}{\rightarrow}

\newtheorem{theorem}[equation]{Theorem}
\newtheorem{lemma}[equation]{Lemma}

\newtheorem{proposition}[equation]{Proposition}

\theoremstyle{definition}
\newtheorem{definition}[equation]{Definition}

\newtheorem{conjecture}[equation]{Conjecture}

\theoremstyle{remark}
\newtheorem{remark}[equation]{Remark}

\usepackage{microtype}
\usepackage{caption}

\setcounter{tocdepth}{1}

\makeatletter
\renewcommand\subsubsection{\@startsection{subsubsection}{2}%
  \z@{.5\linespacing\@plus.7\linespacing}{-.5em}%
  {\normalfont\bfseries}}
\makeatother

\numberwithin{equation}{section}
% \numberwithin{equation}{subsection}

\usepackage[alphabetic,backrefs,lite,nobysame]{amsrefs} % for bibliography

\title{Everywhere local solubility for hypersurfaces in products of projective spaces}

\author{Tom~Fisher}
\address{University of Cambridge,
          DPMMS, Centre for Mathematical Sciences,
          Wilberforce Road, Cambridge CB3 0WB, UK}
\email{T.A.Fisher@dpmms.cam.ac.uk}

\author{Wei Ho}
\address{University of Michigan \\ Department of Mathematics \\ Ann Arbor, MI 48109, USA}
\email{weiho@umich.edu}

\author{Jennifer Park}
\address{The Ohio State University \\ Department of Mathematics \\ Columbus, OH 43210, USA}
\email{park.2720@osu.edu}

\date{\today}

\begin{document}

\begin{abstract} 
We prove that a positive proportion of hypersurfaces in products of projective spaces over $\Q$ are everywhere locally soluble, for almost all multidegrees and dimensions, as a generalization of a theorem of Poonen and Voloch \cite{PV}. We also study the specific case of genus $1$ curves in $\PP^1 \times \PP^1$ defined over $\Q$, represented as bidegree $(2,2)$-forms, and show that the proportion of everywhere locally soluble such curves is approximately $87.4\%$. As in the case of plane cubics \cite{BCF-hassecubic}, the proportion of these curves in $\PP^1 \times \PP^1$ soluble over $\Q_p$ is a rational function of $p$ for each finite prime $p$. Finally, we include some experimental data on the Hasse principle for these curves.
\end{abstract}
\maketitle

\section{Introduction}

Let $V$ be a variety defined over $\Q$. The study of rational points on $V$ often involves determining the local points $V(\Q_\nu)$ for completions $\Q_\nu$ of $\Q$. We say that $V$ is {\em (globally) soluble} if the set $V(\Q)$ of rational points is nonempty, and $V$ is \textit{everywhere locally soluble} if the set $V(\Q_\nu)$ is nonempty for all places $\nu \leq \infty$ of $\Q$.

Poonen and Voloch \cite{PV} show that a positive proportion of all hypersurfaces in $\PP^n$ of fixed degree $d$ are everywhere locally soluble for $n, d \geq 2$ and $(n,d) \neq (2,2)$. In particular, they prove that this proportion (as a limit) is exactly the product $c = \prod_\nu c_\nu > 0$ of local factors $c_\nu$, where $c_\nu$ is the proportion of hypersurfaces that have a $\Q_\nu$-point. In \cite{BBL}, Bright, Browning, and Loughran generalized this theorem to other families of varieties. Poonen and Voloch also conjecture that the proportion of globally soluble hypersurfaces is $c$ when $2 \leq d \leq n$ (with $c=0$ when $(n,d) = (2,2)$), and is $0$ when $d > n + 1$. This conjecture implies that the Hasse principle is true for $100\%$ and for $0\%$ of the everywhere locally soluble hypersurfaces in the two cases, respectively. Browning, Le~Boudec, and Sawin \cite{TimPierreWill} have recently proved this conjecture for $2 \leq d \leq n$ and $(n,d) \neq (3,3)$, i.e., that the Hasse principle is satisfied $100\%$ of the time for Fano hypersurfaces of degree $d$ and dimension at least $3$ in $\PP^n$.

However, Poonen and Voloch \cite{PV} do not make any conjectures about local versus global solubility in the boundary (Calabi-Yau) case where $d = n+1$; this case is perhaps the most interesting and the most mysterious. The simplest of these cases, that of plane cubics ($d = 3$ and $n = 2$), has been studied by Bhargava, Cremona, and the first author \cite{BCF-hassecubic}, in which the authors explicitly compute the proportion $c$ of everywhere locally soluble plane cubics; the same authors \cite{BCF-binaryquartic} also study the proportion of everywhere locally soluble hyperelliptic curves. In addition, Bhargava \cite{manjul-hassecubic} shows that a positive proportion of plane cubics fail the Hasse principle and a positive proportion satisfy the Hasse principle.

In this paper, we are interested in answering the analogous questions and computing the explicit proportion of everywhere locally soluble curves defined in \textit{products} of projective spaces. We also study a specific family of genus one curves over $\Q$ (analogous to the plane cubics studied in \cite{BCF-hassecubic}) and determine the proportion of such curves that are everywhere locally soluble. In forthcoming work, Bhargava and the second author show that the Hasse principle fails for a positive proportion of curves in this family \cite{BH-hasse22}. In this paper, we include some data on the success and failure of the Hasse principle for randomly selected sets of these curves with bounded coefficients.

In Section \ref{S: PoonenVoloch}, we begin by proving the analogue of the theorem of Poonen and Voloch for hypersurfaces in products of projective spaces:

\begin{theorem} \label{thm:PVgeneralization}
Let $d_1, \ldots, d_k, n_1, \ldots, n_k$ be positive integers.
The proportion of multidegree $(d_1,\ldots,d_k)$ hypersurfaces over $\Q$ in $\PP^{n_1} \times \cdots \times \PP^{n_k}$ that are everywhere locally soluble tends to a real number $c > 0$, where $c$ is the product $\prod_\nu c_\nu$ over all places $\nu$ and $c_\nu$ is the proportion of such multidegree homogeneous polynomials with a nontrivial zero over $\Q_\nu$, as long as one of the following holds:
\begin{enumerate}
\item $k = 1$ and $n_1 \geq 2$ but not $(n_1,d_1) = (2,2)$;
\item $k \geq 2$.
\end{enumerate}
\end{theorem}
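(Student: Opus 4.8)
The plan is to follow the strategy of Poonen and Voloch \cite{PV}, adapted to the multiprojective setting, reducing the statement to a product of local densities together with a uniform tail estimate. Fix the multidegree $(d_1,\ldots,d_k)$ and write $V \isom \Aff^M$ for the affine space of multihomogeneous polynomials, with integer points $V_{\Z} = \Z^M$, where $M = \prod_{i=1}^k \binom{n_i+d_i}{n_i}$ is the number of monomials. For a growing box let $\Ntot(B)$ be the number of $f \in V_{\Z}$ with all coefficients in $[-B,B]$, and let $\Nloc(B)$ be the number of such $f$ that are everywhere locally soluble. For each place $\nu$ let $c_\nu$ be the density, with respect to the normalized Haar measure on $V_{\Z_p} \isom \Z_p^M$ (resp. Lebesgue measure on the unit box for $\nu=\infty$), of those $f$ whose vanishing locus $X_f \subset \PP^{n_1}\times\cdots\times\PP^{n_k}$ has a $\Q_\nu$-point. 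The theorem follows once we establish three things: that each $c_\nu$ is well defined and strictly positive; that $\prod_\nu c_\nu$ converges to a positive real number; and that $\lim_{B\to\infty}\Nloc(B)/\Ntot(B) = \prod_\nu c_\nu$.

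The archimedean factor is immediate: the condition that $f$ have a smooth real zero is open and nonempty, so $c_\infty>0$. For each finite prime $p$, positivity of $c_p$ follows by exhibiting a single $f$ whose reduction is a smooth hypersurface carrying an $\F_p$-point and invoking Hensel's lemma; such $f$ fill a set of positive measure. The heart of the matter is a uniform bound
\[
1-c_p = O\!\left(p^{-1-\delta}\right)
\]
for some $\delta>0$ depending only on the $n_i$ and $d_i$, with implied constant independent of $p$. This is what forces $\sum_p (1-c_p)<\infty$, hence $\prod_p c_p>0$, and it is precisely here that hypotheses (i)--(iii) enter.

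To prove the uniform bound I would first dispose of the degenerate multidegrees. If some $d_i=1$, then $f$ is linear in the $i$-th block of variables, so at any point of the remaining factors where the coefficient linear forms do not all vanish one may solve for a point of $\PP^{n_i}$; hence $X_f$ has a $\Q_\nu$-point for every $\nu$, giving $c_\nu=1$ with nothing to prove. So assume all $d_i\ge 2$. By Hensel's lemma, a smooth $\F_p$-point of $X_f \bmod p$ lifts, so an insoluble $f$ satisfies $X_f(\F_p)\subseteq (X_f)_{\sing}(\F_p)$. Since all $d_i\ge2$, the line bundle $\calO(d_1,\ldots,d_k)$ is very ample on the product, which has dimension $\sum_i n_i\ge 2$; a Bertini argument then shows the generic member of the linear system is smooth and geometrically integral of dimension $\sum_i n_i -1\ge 1$, and the Lang--Weil estimate supplies a smooth $\F_p$-point for $p$ large. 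Consequently, for $p$ large, an insoluble $f$ must have $X_f$ singular along a subscheme of dimension $\ge \dim X_f -1$, or non-reduced, or geometrically reducible: otherwise the $O(p^{\dim X_f -2})$ singular points cannot exhaust the $\asymp p^{\dim X_f}$ points of a geometrically integral $X_f$. A dimension count shows that, under (i)--(iii), this bad locus has codimension at least $2$ in $V$, yielding the exponent $1+\delta$. The excluded case $(n_1,d_1)=(2,2)$ is exactly where the codimension drops to $1$: a conic that is a conjugate pair of lines has only its node as an $\F_p$-point, and such forms form a codimension-one family.

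Finally, to interchange the limit with the product I would separate the primes. For a finite set $S$ of primes, the reductions of $f$ modulo $\prod_{p\in S}p^{m}$ equidistribute as $B\to\infty$, so the proportion of $f$ in the box soluble at every place in $S\cup\{\infty\}$ tends to $\prod_{\nu\in S\cup\{\infty\}}c_\nu$; letting $S$ exhaust the primes and using the uniform bound to control the tail completes the argument. The main obstacle, as in \cite{PV}, is making the tail estimate uniform across \emph{all} primes simultaneously: for primes growing with $B$, equidistribution modulo $p$ is unavailable, and one must instead bound directly the number of integer points of bounded height on the codimension-$\ge 2$ insoluble locus, uniformly in $p$. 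Establishing this count, together with verifying the codimension-$\ge 2$ geometry for every multidegree allowed by (i)--(iii) in the multiprojective setting, is where the real work lies.
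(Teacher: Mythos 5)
Your proposal follows the same skeleton as the paper's proof: Hensel plus Lang--Weil at large primes, reduction to a codimension-$\ge 2$ ``bad locus'' in coefficient space, and the Poonen--Voloch density machinery to pass from that to $\prod_\nu c_\nu > 0$. But there are genuine gaps. The first is that your identification of the bad locus is wrong in exactly the cases the theorem most needs to cover. Because you only use the bound $O(p^{\dim X_f - 2})$ on singular points, your trichotomy forces you to include among the bad $f$ those whose reduction is geometrically integral but singular along a subscheme of codimension $1$ in $X_f$. When $\sum_i n_i = 2$ --- plane curves ($k=1$, $n_1=2$, e.g.\ cubics) and curves in $\PP^1\times\PP^1$ ($k=2$, $n_1=n_2=1$, e.g.\ the $(2,2)$-forms studied in the rest of this paper) --- this bad locus contains \emph{every} singular integral curve, i.e.\ the entire discriminant hypersurface, which has codimension $1$; so the ``dimension count'' you assert cannot succeed for your bad locus. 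The correct reduction is that for $p$ large, insolubility forces $f \bmod p$ to fail to be absolutely irreducible: if $X_f \bmod p$ is geometrically integral, its singular locus automatically has dimension $\le \dim X_f - 1$ and degree bounded in terms of $\mathbf{d}$ alone, so its $O(p^{\dim X_f - 1})$ points are still outnumbered by the $\gg p^{\dim X_f}$ points furnished by Lang--Weil (for curves, apply Weil to the normalization). Thus integral-but-singular reductions never obstruct solubility and must be kept out of the bad locus.

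Second, both load-bearing steps are deferred rather than proved. With the bad locus corrected to the non-absolutely-irreducible locus, it decomposes into images of multiplication maps indexed by splittings $\mathbf{d} = \mathbf{r} + (\mathbf{d}-\mathbf{r})$, and the codimension-$\ge 2$ claim is exactly the inequality
\[ \prod_{i=1}^k \binom{n_i + r_i}{n_i} + \prod_{i=1}^k \binom{n_i + d_i-r_i}{n_i} < \prod_{i=1}^k \binom{n_i + d_i}{n_i} \]
for all $\mathbf{r}$ with $0 \le r_i \le d_i$ and $0 < \sum_i r_i < \sum_i d_i$. Proving that this holds precisely under hypotheses (i)--(iii) is the paper's Lemma \ref{lemma:binom}, established by a combinatorial argument; it is the one genuinely new ingredient beyond \cite{PV}, and you assert it (``a dimension count shows\dots'') while conceding at the end that it ``is where the real work lies.'' Likewise, the uniform tail estimate you flag as the main remaining obstacle does not need to be built from scratch: it is precisely Lemmas 20 and 21 of Poonen--Stoll \cite{poonenstoll-casselstate}, which apply verbatim once the bad locus is known to have codimension $\ge 2$, and this is how the paper concludes. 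As written, then, your proposal is a correct outline of the known strategy, but the decisive combinatorial lemma is unproved and the geometric reduction feeding into it would fail for the curve cases.
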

For the cases with $k=1$ not covered by (i), it may be shown that the proportion of everywhere locally soluble hypersurfaces is $0$.

We also predict the success or failure of the Hasse principle for families of hypersurfaces in products of projective spaces in many cases, depending on the multidegree of the hypersurfaces:
\begin{conjecture} \label{conj:PVgeneralization}
For any positive integers $d_i, n_i$ for $i \in \{1,\ldots,k\}$, consider the family of multidegree $(d_1, \ldots, d_k)$-hypersurfaces in $\PP^{n_1} \times \cdots \times \PP^{n_k}$ over $\Q$. Then
\begin{enumerate}
\item If $d_i > n_i+1$ for all $i = 1, \ldots, k$, then $100\%$ of everywhere locally soluble $(d_1, \ldots, d_k)$-hypersurfaces fail the Hasse principle.
\item  If $d_i < n_i+1$ for some $i = 1, \ldots, k$, then $100\%$ of everywhere locally soluble $(d_1, \ldots, d_k)$-hypersurfaces satisfy the Hasse principle.
With these conditions, we have that the proportion of soluble hypersurfaces equals the product of local factors
$c = \prod_\nu c_\nu.$ \label{conj:PVRC}
\end{enumerate}
\end{conjecture}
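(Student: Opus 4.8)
The plan is to compare two counting functions for the family of multidegree $(d_1,\ldots,d_k)$ forms with integer coefficients of height at most $H$: the number $N_{\mathrm{ELS}}(H)$ of everywhere locally soluble members and the number $N_{\mathrm{sol}}(H)$ of globally soluble ones. By Theorem \ref{thm:PVgeneralization}, the proportion $N_{\mathrm{ELS}}(H)/\Ntot(H)$ tends to $c=\prod_\nu c_\nu>0$, where $\Ntot(H)$ counts all integer forms of height at most $H$ and $N=\prod_i \binom{n_i+d_i}{d_i}$ is the number of monomials. Writing $N_{\mathrm{ins}}=N_{\mathrm{ELS}}-N_{\mathrm{sol}}$ for the everywhere-locally-soluble-but-globally-insoluble count, part (i) is the assertion $N_{\mathrm{sol}}(H)=o(\Ntot(H))$, while part (ii) is the assertion $N_{\mathrm{ins}}(H)=o(\Ntot(H))$; in the latter case the density statement $c=\prod_\nu c_\nu$ follows at once by combining with Theorem \ref{thm:PVgeneralization}. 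So both parts reduce to an upper bound on a count of (in)soluble forms, and the two regimes are separated precisely by the sign of the adjunction defect $d_i-(n_i+1)$ in each factor.

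For part (i), I would bound $N_{\mathrm{sol}}(H)$ from above by counting incidences. Every globally soluble form vanishes at some $P\in(\PP^{n_1}\times\cdots\times\PP^{n_k})(\Q)$, and vanishing at a fixed $P$ is a single linear condition on the coefficient vector, so
\[
N_{\mathrm{sol}}(H)\;\leq\;\sum_{P}\#\{\,f : f(P)=0,\ \|f\|\leq H\,\}.
\]
The inner count is $\ll H^{N-1}$ once $P$ has multiheight $\lesssim H$, and it decays for points of larger height; the delicate point is that the product has infinitely many rational points, so the naive sum diverges and must be organized by the height of $P$. When $d_i>n_i+1$ for all $i$, the canonical bundle $\mathcal{O}(d_1-n_1-1,\ldots,d_k-n_k-1)$ restricts to an ample class on a smooth member, so the hypersurface is of general type, and the Lang--Vojta philosophy predicts its rational points are not Zariski dense and concentrate in a proper closed subset of bounded degree. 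The crux is to convert this into a genuine power saving: one wants to show that for $100\%$ of the family the contributing points lie in such a thin set, forcing $N_{\mathrm{sol}}(H)=o(\Ntot(H))$. This is the principal obstacle, and it is essentially a case of the Lang--Vojta conjecture; an unconditional proof would therefore need either that conjecture or a determinant/sieve input (in the spirit of the dimension-growth bounds of Heath-Brown and others) strong enough to beat the trivial $H^{N-1}$ incidence bound after summation over $P$.

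For part (ii), the guiding model is the theorem of Browning, Le Boudec, and Sawin \cite{TimPierreWill}, which establishes exactly this conclusion for $\PP^n$ with $n\geq d+2$. I would exploit a Fano factor: fix an index $i_0$ with $d_{i_0}<n_{i_0}+1$ and project away the $i_0$-th factor, realizing a generic member as a family of degree-$d_{i_0}$ hypersurfaces in $\PP^{n_{i_0}}$ fibered over $\prod_{i\neq i_0}\PP^{n_i}$. Over a fixed rational base point the fibre is a low-degree hypersurface in $\PP^{n_{i_0}}$, for which the Hasse principle holds (via Birch's theorem or the circle method when $n_{i_0}$ is large, and via descent or explicit local-to-global results for small $n_{i_0}$). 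The scheme is then: for $100\%$ of everywhere locally soluble members, produce a rational base point over which the fibre is itself everywhere locally soluble; deduce a rational point from the fibral Hasse principle; and bound the exceptional members by a sieve. The exclusions $(k,n_1,n_2,d_1,d_2)=(2,1,2,\geq2,2)$ and $(2,2,1,2,\geq2)$ are exactly the configurations where the relevant fibre degenerates to a conic or a genus-one curve carrying a Brauer--Manin obstruction, so that the Hasse principle genuinely fails on a positive proportion, consistent with the companion work \cite{BH-hasse22}.

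The hardest technical points in part (ii) will be uniformity and the sieve error. The fibral Hasse-principle input must be uniform as the fibre varies over a family of bounded height, which in the circle-method range requires control of singular series and singular integrals uniformly in the coefficients, and for small $n_{i_0}$ requires a descent argument that degenerates precisely at the excluded multidegrees. Threading this through the product structure, so that the incidence and sieve bounds over one factor do not destroy the power saving when multiplied across the remaining factors, is where the method of \cite{TimPierreWill} must be genuinely extended rather than merely cited; I expect this to be the main difficulty on the Hasse-holds side, paralleling the Lang--Vojta obstacle on the Hasse-fails side.
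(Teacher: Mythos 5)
First, a framing point: this statement is a \emph{conjecture} in the paper. The paper does not prove it; it offers a counting heuristic for part (i) and, for part (ii), an argument conditional on Colliot-Th\'el\`ene's conjecture (Conjecture \ref{conj:CT}). So your proposal, which is likewise a plan with acknowledged open steps, must be compared against the paper's motivation rather than a proof. For part (i) your write-up contains a genuine wrong turn. You set up the same incidence count as the paper, but then assert that ``the naive sum diverges and must be organized by the height of $P$'' and that the crux is Lang--Vojta. This misses the paper's key mechanism, which is exactly the opposite: the number of forms of height at most $H$ vanishing at $a$ is $c(a)H^{m-1}/\phi(a) + O(H^{m-2})$ where $\phi(a)$ is a lattice covolume, the covolume factors as $\phi(a) = \prod_{i=1}^k \phi_i(a_i)$ along the product structure \eqref{eq:sumphi}, and the AM--GM bound \eqref{eq:amgm}, $\phi_i(a_i) \geq m_i^{1/2}\bigl|\prod_\ell a_{i\ell}\bigr|^{d_i/(n_i+1)}$, shows that each factor sum $\sum_{a_i} 1/\phi_i(a_i)$ \emph{converges} precisely when $d_i > n_i+1$. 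The convergence of $\sum_a 1/\phi(a)$ under the hypothesis of part (i) is what drives the prediction $N(H) = O(H^{m-1}) = o(\Ntot(H))$; no Lang--Vojta input appears, and your version never identifies where the numerical condition $d_i > n_i+1$ for all $i$ would enter quantitatively. (Moreover, Lang--Vojta by itself --- non-density of rational points on each member --- would not give the uniform power saving you need, so it is both unnecessary for the heuristic and insufficient as invoked.)

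For part (ii), the paper's route is genuinely different from yours. Conditional on Conjecture \ref{conj:CT}, it suffices to prove two unconditional statements: Proposition \ref{prop:RCmultideg}, that a smooth multidegree hypersurface is rationally connected if and only if $d_i < n_i+1$ for some $i$ (proved by projecting away from the Fano factor and applying Graber--Harris--Starr --- note the projection is used only to establish rational connectedness, not to run a fibration method); and Proposition \ref{prop:BMvacuous}, that $\Br K \to \Br X$ is an isomorphism for smooth complete intersections of dimension $\geq 3$ in $\mathscr{P}$ (via weak Lefschetz), so the Brauer--Manin obstruction is vacuous. Low-dimensional cases are handled ad hoc, and the excluded multidegrees are exactly the bidegree $(2,d_2)$-surfaces in $\PP^2 \times \PP^1$, i.e.\ conic bundles where Iskovskikh's example shows the Hasse principle can fail; the obstruction there lives on the total space (a surface), not on a degenerate fibre as your description suggests. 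Your BLS-style plan (fibral Hasse principle over a Fano factor, uniform circle-method input, sieve for exceptional members) is a legitimately different strategy and, if completed, would have the advantage of being unconditional; but as written its two critical steps are precisely the open problems, so it reduces the conjecture to statements at least as hard as what it replaces, whereas the paper reduces part (ii) to one standard conjecture plus two propositions it actually proves.
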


Note that the numerical conditions in Conjecture \ref{conj:PVgeneralization}(\ref{conj:PVRC}) are significantly more general than just requiring the hypersurface to be Fano, which is equivalent to having $d_i < n_i + 1$ for {\em all} $i$ (not just some $i$).
Still, as in the original Poonen--Voloch heuristic, there are some remaining boundary cases, namely when $d_i \geq n_i + 1$ for all $i$ with equality for at least one $i$. In this paper, we compute the explicit probability of everywhere local solubility for the simplest such  case with $k > 1$, namely the family of bidegree $(2,2)$ curves in $\PP^1 \times \PP^1$. Explicitly, these are the curves defined by the bihomogeneous bidegree $(2,2)$-polynomials of the form
\begin{align} \label{eq:bideg22}
F(X_0, X_1, Y_0, Y_1) = &\  a_{00}X_0^2Y_0^2 + a_{01}X_0^2Y_0Y_1 + a_{02}X_0^2Y_1^2 \nonumber\\ 
&\ + a_{10}X_0X_1Y_0^2 + a_{11}X_0X_1Y_0Y_1 + a_{12}X_0X_1Y_1^2 \\
&\ + a_{20}X_1^2Y_0^2 + a_{21}X_1^2Y_0Y_1 +a_{22} X_1^2Y_1^2 \nonumber
\end{align}
where $a_{ij} \in \Q$ for $i,j \in \{0,1,2\}$. Smooth curves of this form have genus one.
We show:
\begin{theorem}
\label{thm:loc}
For a finite prime $p$, the probability a bidegree $(2,2)$-form with coefficients in $\Z_p$ is soluble over
$\Q_p$ is
\[ \rho(p) = 1 - \frac{p (p-1) (p^2-1) f(p)}{8 (p^8-1) (p^9-1)} \]
where $f(p) = 4 p^{11} - 4 p^{10} + 4 p^9 - p^8 + 5 p^7
   - 2 p^6 + 5 p^5 - p^4 + 2 p^3 - 2 p^2 + 6 p - 2$.
\end{theorem}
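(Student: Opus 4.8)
The plan is to compute the complementary probability $1 - \rho(p)$, the Haar measure of the insoluble $(2,2)$-forms in $\Z_p^9$, by stratifying according to the reduction of $F$ modulo $p$. The starting point is that $\PP^1 \times \PP^1$ is proper over $\Z_p$, so a $\Q_p$-point of the curve $C = \{F = 0\}$ extends to a $\Z_p$-point and hence reduces to an $\F_p$-point of the reduced curve $\overline C = \{\overline F = 0\}$; conversely, by Hensel's lemma any smooth $\F_p$-point of $\overline C$ lifts to a $\Q_p$-point. Thus $F$ is soluble whenever $\overline C$ is smooth: a smooth member of the $(2,2)$-system is a geometrically integral curve of arithmetic genus $1$ (by adjunction on $\PP^1\times\PP^1$), and such a curve over $\F_p$ is a torsor under its Jacobian, hence has an $\F_p$-point by Lang's theorem (or the Hasse--Weil bound). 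Consequently all insolubility is concentrated on the discriminant locus where $\overline C$ is singular, a hypersurface of codimension $1$, which already matches the overall order $1/p$ of $1 - \rho(p)$.

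Next I would extract the self-similar part of the integral. Partitioning $\Z_p^9$ by whether $p$ divides every coefficient, the case $\overline F = 0$ has measure $p^{-9}$, and there $F = pF'$ has the same solubility as $F'$ with $F'$ again Haar-distributed; this yields the relation $\rho(p) = (\text{contribution of } \overline F \neq 0) + p^{-9}\rho(p)$, which accounts for the factor $(p^9 - 1)$ in the denominator. It therefore remains to compute, for $\overline F \ne 0$, the conditional solubility probability, and by the previous paragraph only the singular reductions contribute anything other than certain solubility.

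The core of the argument is a classification of the singular $(2,2)$-forms over $\F_p$ up to the action of $\GL_2 \times \GL_2$ together with scaling, and, for each stratum, the computation of both its $p$-adic measure and the conditional probability of $\Q_p$-solubility. The relevant degenerations include the nodal and other singular irreducible curves, the various reducible configurations (unions of bidegrees $(1,1)+(1,1)$, $(2,1)+(0,1)$, $(2,0)+(0,2)$, and so on, possibly Galois-conjugate over $\F_{p^2}$), and the non-reduced reductions such as a double $(1,1)$-curve. The counts of these strata are orbit counts for $\GL_2 \times \GL_2$, which is why $\lvert \GL_2(\F_p)\rvert = p(p-1)^2(p+1) = p(p-1)(p^2-1)$ and the point-counting factors $(p^2-1)$ and $(p^8-1)$ appear in the answer. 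For a stratum whose $\F_p$-points are all smooth, Hensel gives solubility; the real work lies in the strata whose only $\F_p$-points are singular, or whose $\F_p$-points are absent (as for conjugate components meeting in conjugate points). There I would move the singular point to the origin in an affine chart $x = X_1/X_0$, $y = Y_1/Y_0$, rescale by $x \mapsto p\,x$, $y \mapsto p\,y$ and its weighted variants, and recognize the rescaled form as a new instance of the same problem. This produces a second self-similar recursion, which I expect to be responsible for the factor $(p^8 - 1)$, and whose resolution expresses each conditional probability as an explicit rational function of $p$.

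Assembling the strata, dividing by $1 - p^{-9}$, and simplifying should collapse to the stated $\rho(p)$, with the numerator polynomial $f(p)$ and the factor $8$ emerging from the combinatorics of the degenerate strata and their automorphisms. I expect the main obstacle to be precisely this most degenerate layer: enumerating the non-reduced and reducible reductions completely, correctly computing the solubility probability in each via the rescaling recursion (where a point that is singular modulo $p$ may or may not lift depending on finer congruence conditions), and ensuring that no stratum is double-counted across the $\GL_2 \times \GL_2$-classification. As a safeguard against bookkeeping errors, I would compare the resulting rational function against a direct machine count of soluble forms modulo small powers of $p$ for the first few primes.
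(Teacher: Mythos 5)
Your overall skeleton matches the paper's: stratify $\Z_p^9$ by the reduction mod $p$, use Hensel's lemma to dispose of every form whose reduction has a smooth $\F_p$-point, and handle the remaining strata by $p$-adic rescalings that set up solvable recursions. The paper's Sections 3 and 4 do exactly this (its classification is by factorization type rather than by $\GL_2\times\GL_2$-orbits, but that is cosmetic), and your $\overline F = 0$ self-similarity is implicit in its normalization by $(p^9-1)/(p-1)$.

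The gap is in the mechanism you propose for the hard strata. ``Move the singular point to the origin and rescale $x \mapsto px$, $y \mapsto py$'' closes up into a self-similar recursion only when the singular $\F_p$-points of the reduction are isolated, as for the type $(1,1)\overline{(1,1)}$ meeting in two rational points, where the paper obtains the lifting probability $\alpha = 1/(p+1)$ this way and then multiplies the probabilities at the two points because those events depend on disjoint sets of coefficients. For the non-reduced types $(2,0)(0,1)^2$, $(1,1)^2$ and $(1,0)^2(0,1)^2$ the singular locus of the reduction is a whole curve: there is no single point to move, a $\Q_p$-point may reduce to any of the $p+1$ points of a double component, and the lifting events at those points are not independent in any obvious way. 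The paper copes by rescaling transverse to the double components (e.g.\ $Y_0 \leftarrow pY_0$), which produces valuation patterns lying \emph{outside} the original list of bad strata; it must therefore introduce auxiliary classes (the line condition $\delta_{\rm line}$, the quantities $\delta_1,\delta_2,\varepsilon_1,\varepsilon_2$, and the primed probabilities $\xi_i'$) and solve a coupled affine-linear system across all of them --- that system, not a single second recursion, is what produces the factor $(p^8-1)$. Moreover, for the two worst strata, $(1,1)^2$ and $(1,1)\overline{(1,1)}$ meeting in a single rational point, the paper does not close the recursion internally at all: it uses the measure-preserving map $F_0Y_0^2 + F_1Y_0Y_1 + F_2Y_1^2 \mapsto (F_1,\,-F_0F_2)$ to generalised binary quartics and imports the probabilities $\sigma,\tau,\tau^*$ already computed in \cite{BCF-binaryquartic}. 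Your plan would have to either rediscover this reduction or redo those substantial computations from scratch; as stated, ``recognize the rescaled form as a new instance of the same problem'' is precisely the step that fails for these strata.
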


The probability a $(2,2)$-form is soluble over the reals (when we
choose the coefficients uniformly at random in $[-B, \ldots, B]$ 
with $B$ large) appears to be about $96.46\%$, based on a Monte Carlo simulation. 
Note that the probability of solubility over $\R$ is greater than $7/8$, since for insolubility
$a_{00},a_{02},a_{20},a_{22}$ must all have the same sign.

Theorem~\ref{thm:loc} gives $\prod_{p < \infty} \rho(p) \approx 0.90592$.
Combining this with the estimate of the previous paragraph, we have $$\prod_{p \le \infty} \rho(p) \approx 0.8739,$$
thereby giving the following theorem:

\begin{theorem}
\label{thm:global}
The probability that a random $(2,2)$-form with coefficients in $\Q$ is $p$-locally soluble for all $p < \infty$ is approximately $90.592\%$; assuming the truth of the Monte Carlo experiment, the probability that a random $(2,2)$-form with coefficients in $\Q$ is everywhere locally soluble is approximately $87.39\%$.
\end{theorem}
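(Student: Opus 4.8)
The plan is to assemble Theorem~\ref{thm:PVgeneralization} and Theorem~\ref{thm:loc} into a convergent product of local densities and then estimate that product numerically. The family of bidegree $(2,2)$-forms in $\PP^1 \times \PP^1$ falls squarely into case~(ii) of Theorem~\ref{thm:PVgeneralization}: here $k = 2$, $n_1 = n_2 = 1$, and $d_1 = d_2 = 2 \geq 2$, so the hypothesis ``if $n_1 = n_2 = 1$ then $d_1, d_2 \geq 2$'' holds. That theorem therefore already guarantees that the proportion of everywhere locally soluble such forms converges to the product $\prod_{\nu \leq \infty} c_\nu$, where $c_\nu$ is the probability of $\Q_\nu$-solubility, which for each finite $p$ is exactly the density $\rho(p)$ computed in Theorem~\ref{thm:loc}. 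Running the same sieve without imposing the archimedean condition shows that the proportion soluble at every finite prime converges to $\prod_{p < \infty} \rho(p)$. Thus both assertions reduce to evaluating these two products, the first with and the second without the factor $\rho(\infty)$.

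Next I would substitute the explicit rational function for $\rho(p)$ from Theorem~\ref{thm:loc} and confirm convergence. Reading off the leading terms of the numerator ($\sim 4p^{15}$) and denominator ($\sim 8p^{17}$) of $1 - \rho(p)$ gives the asymptotic $1 - \rho(p) = \tfrac{1}{2p^2} + O(p^{-3})$, so $\sum_p (1 - \rho(p))$ converges by comparison with the prime zeta function and $\prod_{p < \infty} \rho(p)$ tends to a positive limit. To pin down five significant digits I would compute the finite product $\prod_{p \leq P} \rho(p)$ for a large cutoff $P$ and control the discarded tail: an explicit constant $C$ with $0 < 1 - \rho(p) \leq C p^{-2}$ for all $p \geq P$ yields $0 \leq -\log \prod_{p > P} \rho(p) \leq 2C \sum_{p > P} p^{-2} < 2C/P$, which is driven below the desired precision by taking $P$ large. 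Evaluating this gives $\prod_{p < \infty} \rho(p) \approx 0.90592$, which is the first assertion.

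For the second assertion I would multiply by the archimedean factor $\rho(\infty)$. Unlike the finite places, $\rho(\infty)$ has no closed-form rational expression; the value $\rho(\infty) \approx 0.9646$ is the output of the Monte Carlo simulation described before Theorem~\ref{thm:loc}, and this is precisely why the statement is qualified by ``assuming the truth of the Monte Carlo experiment.'' Combining the two factors gives $\prod_{p \leq \infty} \rho(p) \approx 0.90592 \cdot 0.9646 \approx 0.8739$, completing the proof.

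The only genuine obstacle is the archimedean density: the finite part $90.592\%$ is rigorous once the explicit $O(p^{-2})$ tail bound on $1 - \rho(p)$ is recorded, whereas the everywhere-local figure $87.39\%$ can only be as accurate as the simulated estimate of $\rho(\infty)$. I would note in passing the two clean sanity checks already flagged in the text, namely that $\rho(\infty) > 7/8$ because insolubility over $\R$ forces $a_{00}, a_{02}, a_{20}, a_{22}$ to share a common sign, which keeps the final product safely positive and consistent with the reported value.
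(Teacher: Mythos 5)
Your proposal is correct and follows essentially the same route as the paper, which derives Theorem~\ref{thm:global} in a short paragraph by combining Theorem~\ref{thm:PVgeneralization} (case~(ii), $k=2$, $n_1=n_2=1$, $d_1=d_2=2$) with the local densities $\rho(p)$ of Theorem~\ref{thm:loc}, numerically evaluating $\prod_{p<\infty}\rho(p)\approx 0.90592$, and multiplying by the Monte Carlo estimate $\rho(\infty)\approx 0.9646$ for the real place. Your additional details --- the explicit asymptotic $1-\rho(p)=\tfrac{1}{2p^2}+O(p^{-3})$ justifying convergence, the tail bound for the truncated product, and the remark that the same sieve argument applies with the archimedean place omitted --- are all correct and merely make explicit what the paper leaves implicit.
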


A heuristic similar to that in \cite{manjul-hassecubic} suggests that of the
$(2,2)$-forms that are everywhere locally soluble, exactly $1/4$ should be globally soluble. In Section~\ref{sec:exp}, we report on an experiment to test this
prediction numerically. 

\subsection*{Acknowledgments} Much of this work was completed during a workshop organized by Alexander Betts, Tim Dokchitser, Vladimir Dokchitser and Celine Maistret, a trimester organized by David Harari, Emmanuel Peyre, and Alexei Skorobogatov, and a workshop organized by Michael Stoll; we thank the organizers as well as Baskerville Hall, Institut Henri Poincar\'e, and Franken-Akademie Schloss Schney, respectively, for their hospitality during those periods. We also thank Bhargav Bhatt, John Cremona, David Harari, Max Lieblich, Daniel Loughran, Bjorn Poonen, and the anonymous referee for helpful conversations and comments.

WH was partially supported by NSF grants DMS-$1701437$ and DMS-$1844763$ and the Sloan Foundation. JP was partially supported by NSF grant DMS-$1902199$.

\section{Generalization of Poonen--Voloch's theorem and conjecture}
\label{S: PoonenVoloch}

In this section, we generalize the main result and conjectures of \cite{PV} to hypersurfaces in products of projective spaces. The main idea of the proof of the theorem is identical to that of \cite{PV} but relies on a more general combinatorial inequality. We work over the field $\Q$ for this section. 

Fix an integer $k \geq 1$ and positive integers $n_i, d_i$ for $1 \leq i \leq k$. We consider multidegree $(d_1, \cdots, d_k)$ hypersurfaces in the product space $\mathscr{P} := \PP^{n_1} \times \cdots \times \PP^{n_k}$. In particular, let $\Z[\{x_{ij}\}]_{\mathbf d}$ denote the set of multihomogeneous polynomials in $\Z[\{x_{ij}\ : {1 \leq i \leq k, 0 \leq j \leq n_i}\}]$ of multidegree ${\mathbf d} = (d_1,\ldots,d_k)$. There are $m_i := {{n_i + d_i} \choose {d_i}}$ monomials of degree $d_i$ in $(n_i+1)$ variables, so the total number of monomials of multidegree $\mathbf{d}$ in $\Z[\{x_{ij}\}]_{\mathbf d}$ is $m = \prod_{i=1}^k m_i$.

Define the \textit{height} $h(f)$ of $f \in \Z[\{x_{ij}\}]_{\mathbf d}$ to be the maximum of the absolute values of the coefficients of $f$.
Let $M_\Q$ be the set of places of $\Q$. We define the following counts for $H > 0$ and $\nu \in M_{\Q}$:
\begin{align*}
\Ntot(H) &:= \#\{ f \in \Z[\{x_{ij}\}]_{\mathbf d} : h(f) \leq H \} = (2 \lfloor H \rfloor + 1)^m\\
N(H) &:= \#\{ f \in \Z[\{x_{ij}\}]_{\mathbf d} : h(f) \leq H \textrm{ and } \exists\, \mathbf{x} = \{x_{ij}\} \in \prod_{i=1}^k \Z^{n_i+1} \setminus \{0\} \textrm{ with } f(\mathbf{x}) = 0\}\\
N_\nu(H) &:= \#\{ f \in \Z[\{x_{ij}\}]_{\mathbf d} : h(f) \leq H \textrm{ and } \exists\, \mathbf{x} = \{x_{ij}\} \in \prod_{i=1}^k \Q_\nu^{n_i+1} \setminus \{0\} \textrm{ with } f(\mathbf{x}) = 0 \}\\
\Nloc(H) &:= \#\{ f \in \Z[\{x_{ij}\}]_{\mathbf d} : h(f) \leq H \textrm{ and } \forall\, \nu \in M_\Q, \exists\, \mathbf{x} \in \prod_{i=1}^k \Q_\nu^{n_i+1} \setminus \{0\} \textrm{ with } f(\mathbf{x}) = 0 \}.
\end{align*}

In other words, $\Ntot(H)$ is the total number of multidegree $\mathbf{d}$ polynomials in $\mathscr{P}$ of height at most $H$, and $N(H)$, $N_\nu(H)$, and $\Nloc(H)$ are the number of such polynomials that are globally soluble, soluble over $\Q_\nu$, and everywhere locally soluble, respectively. The limits
\[
\lim_{H \to \infty} \frac{N(H)}{\Ntot(H)} \quad \textrm{and} \quad \lim_{H \to \infty} \frac{\Nloc(H)}{\Ntot(H)},
\]
if they exist, will be called the proportion of globally soluble and everywhere locally soluble multidegree $\mathbf{d}$ hypersurfaces, respectively. Note that for any place $\nu$, the local proportion $\lim_{H \to \infty} N_\nu(H)/\Ntot(H)$ exists. Indeed, as explained in Remark 2.3 of \cite{PV}, after normalizing the Haar measure (or Lebesgue measure for $\nu = \infty$) on the space $\Z_\nu^m$ of the multihomogeneous polynomials of degree $\mathbf{d}$ in the variables $\{x_{ij} : 1 \leq i \leq k, 0 \leq j \leq n_i \}$, we see that this local proportion is the measure of the $\nu$-adically closed subset of $\Z_\nu^m$ corresponding to the multihomogeneous polynomials with a nontrivial zero over $\Q_\nu$. We restate Theorem \ref{thm:PVgeneralization} using this language, now ignoring some of the trivial cases where some $d_i = 1$:

\begin{theorem} \label{thm:localproportion}
Let $c_\nu = \lim_{H \to \infty} N_\nu(H)/\Ntot(H)$.
As $H \to \infty$, the proportion $\Nloc(H)/\Ntot(H)$ tends to $c := \prod_\nu c_\nu > 0$,
if one of the following conditions holds:
\begin{enumerate}
\item $k = 1$ and $n_1, d_1 \geq 2$ but not $(n_1,d_1) = (2,2)$;
\item $k = 2$, and if $n_1 = n_2 = 1$, then $d_1, d_2 \geq 2$;
\item $k \geq 3$.
\end{enumerate}
\end{theorem}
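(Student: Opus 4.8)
The plan is to follow the sandwiching strategy of Poonen and Voloch, isolating the single new ingredient---a codimension estimate for a ``bad locus'' of forms reduced modulo $p$---as a combinatorial inequality adapted to the multiprojective setting. Since each local factor $c_\nu$ exists, the first task is to show that the infinite product $\prod_\nu c_\nu$ converges to a positive real number; after disposing of $\nu = \infty$ and finitely many small primes by hand, this reduces to proving that $1 - c_p$ is summable over primes $p$. The second and main task is to show $\Nloc(H)/\Ntot(H) \to c$. Fix a cutoff $L$ and sandwich: every everywhere locally soluble $f$ is in particular soluble at $\infty$ and at all $p \le L$, while conversely an $f$ soluble at $\infty$ and at every $p \le L$ can fail to lie in $\Nloc$ only by being insoluble at some $p > L$.

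For the upper bound I would use the Chinese Remainder Theorem to see that the conditions of $\Q_p$-solubility for distinct $p \le L$ (together with $\R$-solubility) involve independent residues of the coefficient vector, so that the proportion of $f$ with $h(f) \le H$ soluble at $\infty$ and at every $p \le L$ tends, as $H \to \infty$, to $c_\infty \prod_{p \le L} c_p$; letting $L \to \infty$ gives $\limsup_H \Nloc(H)/\Ntot(H) \le c$. For the lower bound the same count yields
\[
\frac{\Nloc(H)}{\Ntot(H)} \;\ge\; \frac{\#\{h(f)\le H : f \text{ soluble at } \infty \text{ and all } p\le L\}}{\Ntot(H)} \;-\; \sum_{p > L} \frac{\#\{h(f)\le H : f \text{ insoluble over } \Q_p\}}{\Ntot(H)},
\]
so it suffices to bound the tail sum on the right uniformly in $H$ and show it vanishes as $L \to \infty$.

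The geometric input controlling a single prime is that, for $p$ large relative to $\mathbf d$ and $\mathbf n$, a smooth $\F_p$-point on the reduction $X_f \bmod p$ lifts by Hensel's lemma to a $\Q_p$-point, while a smooth multidegree-$\mathbf d$ hypersurface in $\mathscr P_{\F_p}$---which has positive dimension under hypotheses (i)--(iii)---always carries an $\F_p$-point by the Lang--Weil estimates; hence insolubility over $\Q_p$ forces $X_f \bmod p$ to have no smooth $\F_p$-point. The crux is therefore a uniform bound of the form $O(p^{-2})$ on the density of those $\bar f \in \F_p[\{x_{ij}\}]_{\mathbf d}$ whose hypersurface has no smooth $\F_p$-point. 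Merely being singular is a codimension-one condition (density $\sim 1/p$, not summable), so the saving must come from the observation that insolubility requires \emph{every} $\F_p$-point of $X_f$ to be singular, a far more restrictive condition. Quantifying this---showing that the relevant bad locus has codimension at least $2$ in the $m = \prod_i \binom{n_i+d_i}{d_i}$-dimensional space of forms---is exactly the combinatorial inequality, and the case distinction on $(k, n_i, d_i)$ in the theorem is precisely the range in which it holds; the excluded cases (plane conics, and bidegree $(d_1,d_2)$ with $n_1 = n_2 = 1$ and some $d_i = 1$) are those where the codimension drops to one and independence fails.

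I expect the main obstacle to be this combinatorial codimension estimate together with the attendant geometric sieve. Translating a mod-$p$ density bound into a bound on the integer count $\#\{h(f) \le H : X_f \bmod p \text{ has no smooth } \F_p\text{-point}\}$ is routine while $p$ is small compared with $H$, but the range $p \gtrsim H$ requires a separate, more delicate argument---counting forms of bounded height with singular reduction---to keep the tail $\sum_{p>L}$ under control uniformly in $H$. Once the codimension-$\ge 2$ inequality is established in the multiprojective setting and fed into this sieve, summability of $1 - c_p$, positivity of $\prod_\nu c_\nu$, and the matching lower bound all follow as in Poonen--Voloch.
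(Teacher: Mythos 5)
Your high-level architecture does match the paper's: the sandwiching, CRT-independence, and tail-control steps you outline are exactly what the paper imports wholesale by citing Lemmas 20 and 21 of Poonen--Stoll, and the one genuinely new ingredient is indeed a combinatorial codimension bound. But your proposal has a real gap precisely at that new ingredient, in two respects. First, the bridge from ``$F$ insoluble over $\Q_p$'' to membership in an \emph{algebraic} codimension-$2$ locus is missing. You invoke Lang--Weil only for \emph{smooth} hypersurfaces and then describe the bad locus as ``every $\F_p$-point of $X_f$ is singular.'' That condition is arithmetic (it depends on the $\F_p$-points for each $p$ separately), not the set of $\F_p$-points of a closed subscheme defined over $\Z$; yet both the $O(p^{-2})$ density bound and, crucially, the uniform-in-$H$ tail estimate for $p \gtrsim H$ (Poonen--Stoll's Lemma 21) require the latter. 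The correct bridge, which the paper uses, is Lang--Weil for \emph{absolutely irreducible}, possibly singular, hypersurfaces: such a hypersurface has $\gg p^{\dim}$ rational points while its singular locus contributes only $O(p^{\dim - 1})$, so for large $p$ it has a smooth $\F_p$-point. Hence insolubility at a large prime forces the reduction to be geometrically \emph{reducible}, and the geometrically reducible forms do constitute a closed subscheme over $\Z$, namely the union over splittings $\mathbf{r} + (\mathbf{d}-\mathbf{r}) = \mathbf{d}$ of the images of the multiplication maps on pairs of forms of complementary multidegrees. With only your version of Lang--Weil, the best available conclusion is ``bad implies singular reduction,'' which, as you yourself note, is a codimension-one condition and not summable.

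Second, the combinatorial inequality itself---essentially the entire content of this theorem beyond Poonen--Voloch---is never stated, let alone proved. What is needed (the paper's Lemma \ref{lemma:binom}) is that for every splitting with $0 < \sum_i r_i < \sum_i d_i$,
\[
\prod_{i=1}^k \binom{n_i + r_i}{n_i} + \prod_{i=1}^k \binom{n_i + d_i - r_i}{n_i} < \prod_{i=1}^k \binom{n_i + d_i}{n_i},
\]
which says exactly that the source of each multiplication map has projective dimension at most $m-3$, i.e.\ that the reducible locus has codimension at least $2$. Note that this inequality is about factorization loci, not about singular points, which is why the reducibility bridge above is indispensable to your plan. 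The paper proves it by identifying the three products as counts of tuples $(A_1,\ldots,A_k)$ of $n_i$-element subsets of $\{1,\ldots,n_i+d_i\}$ avoiding prescribed blocks, and then exhibiting at least two tuples outside $S_1 \cup S_2$; it is exactly in this last step that the trichotomy (i)--(iii) enters, case by case. Your assertion that the case distinction ``is precisely the range in which it holds'' restates what must be proved rather than proving it. Your identification of the excluded cases is correct, but without the inequality and the reducibility bridge, the sieve cannot close.
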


\begin{proof}
If $f$ is absolutely irreducible modulo $p$, then for sufficiently large $p$, the Lang-Weil estimate guarantees a smooth point on the hypersurface $f = 0$ modulo $p$, which may be lifted to a $\Q_p$-point by Hensel's lemma. By Lemmas 20 and 21 of \cite{poonenstoll-casselstate}, it suffices to show that the space of reducible polynomials is of codimension at least $2$ in the space of all polynomials in $\Z[\{x_{ij}\}]_{\mathbf d}$. This follows from Lemma \ref{lemma:binom} below, which shows that the product of the projective spaces of multidegree $(r_1,\cdots,r_k)$ polynomials and of multidegree $(d_1-r_1,\cdots, d_k-r_k)$ polynomials has dimension at most $m - 3$, where $m-1$ is the dimension of 
the projective space of multidegree $(d_1,\cdots,d_k)$ polynomials.
\end{proof}

\begin{lemma} \label{lemma:binom}
Fix a positive integer $k$. Given positive integers $n_i, d_i, r_i$ for $1 \leq i \leq k$ such that $0 \leq r_i \leq d_i$ for all $i$ but $0 < \sum_i r_i < \sum_i d_i$, we have
\begin{equation} \label{eq:binominequality}
\prod_{i=1}^k \binom{n_i + r_i}{n_i} + \prod_{i=1}^k \binom{n_i + d_i-r_i}{n_i} < \prod_{i=1}^k \binom{n_i + d_i}{n_i} 
\end{equation}
if one of the following conditions holds:
\begin{enumerate}
\item $k = 1$ and $n_1, d_1 \geq 2$ but not $(n_1,d_1) = (2,2)$; \label{cond:k1}
\item $k = 2$, and if $n_1 = n_2 = 1$, then $d_1, d_2 \geq 2$; \label{cond:k2}
\item $k \geq 3$. \label{cond:k3}
\end{enumerate}
\end{lemma}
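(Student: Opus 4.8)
The plan is to translate \eqref{eq:binominequality} into a problem about counting monomials and then to exhibit enough monomials ``missed'' by a natural pair of maps. Write $a_i=\binom{n_i+r_i}{n_i}$, $b_i=\binom{n_i+d_i-r_i}{n_i}$, $c_i=\binom{n_i+d_i}{n_i}$, and $A=\prod_i a_i$, $B=\prod_i b_i$, $C=\prod_i c_i$, so that $A,B,C$ count the monomials of multidegree $\mathbf r$, $\mathbf d-\mathbf r$, $\mathbf d$ respectively in the variables $\{x_{ij}\}$. Writing $e_{ij}$ for the exponent of $x_{ij}$ in a monomial, I would introduce two maps into the multidegree-$\mathbf d$ monomials: multiplication by $\prod_i x_{i0}^{d_i-r_i}$ carries the multidegree-$\mathbf r$ monomials injectively onto those $M$ with $e_{i0}\ge d_i-r_i$ for all $i$, while multiplication by $\prod_i x_{i1}^{r_i}$ (legitimate since every $n_i\ge1$) carries the multidegree-$(\mathbf d-\mathbf r)$ monomials injectively onto those with $e_{i1}\ge r_i$ for all $i$. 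In each block the constraints $e_{i0}\ge d_i-r_i$ and $e_{i1}\ge r_i$ force $e_{i0}+e_{i1}=d_i$ with all other exponents zero, so the two images $\mathrm{im}_1,\mathrm{im}_2$ meet in the single monomial $\prod_i x_{i0}^{d_i-r_i}x_{i1}^{r_i}$. Inclusion--exclusion then gives $A+B=\#(\mathrm{im}_1\cup\mathrm{im}_2)+1$, so if $\mathcal{G}$ denotes the set of multidegree-$\mathbf d$ monomials outside $\mathrm{im}_1\cup\mathrm{im}_2$ we have $\#\mathcal{G}=C-A-B+1$, and \eqref{eq:binominequality} becomes exactly the assertion $\#\mathcal{G}\ge2$.

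A monomial lies in $\mathcal{G}$ precisely when some block has $e_{i0}<d_i-r_i$ and some (possibly different) block has $e_{j1}<r_j$; since $0<\sum_i r_i<\sum_i d_i$, the sets $I_A=\{i:r_i<d_i\}$ and $I_B=\{j:r_j\ge1\}$ are nonempty, which is what lets these ``witness'' exponents be made small. For case (iii) I would choose distinct witness blocks $\alpha\ne\beta$ with $\alpha\in I_A$, $\beta\in I_B$ (possible whenever $k\ge2$, since $I_A,I_B$ are nonempty with union $\{1,\dots,k\}$ and hence cannot both be one common singleton), fix a block-$\alpha$ monomial with $e_{\alpha0}<d_\alpha-r_\alpha$ and a block-$\beta$ monomial with $e_{\beta1}<r_\beta$, and let the remaining blocks range freely. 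This produces at least $(c_\alpha-a_\alpha)(c_\beta-b_\beta)\prod_{i\ne\alpha,\beta}c_i\ge 1\cdot1\cdot2=2$ elements of $\mathcal{G}$, the last factor coming from any third block; so $k\ge3$ follows immediately, with room to spare.

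For case (i) there is a single block, and $\mathcal{G}$ is the set of degree-$d_1$ monomials in $n_1+1$ variables with $e_0\le d_1-r_1-1$ and $e_1\le r_1-1$. When $n_1=2$ the remaining exponent is determined, so $\#\mathcal{G}=r_1(d_1-r_1)$, which is $\ge2$ for every $0<r_1<d_1$ exactly when $d_1\ge3$; when $n_1\ge3$ one already obtains $\binom{n_1}{2}\ge3$ monomials by fixing $(e_0,e_1)=(d_1-r_1-1,r_1-1)$ and distributing the remaining degree $2$ among $x_2,\dots,x_{n_1}$. Thus $\#\mathcal{G}\ge2$ whenever $n_1,d_1\ge2$ with $(n_1,d_1)\ne(2,2)$, and the excluded case $(2,2)$ is exactly the one where $r_1(d_1-r_1)=1$.

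The delicate case, and the main obstacle, is case (ii). Here $\mathcal{G}$ can be as small as two monomials, and—as $(n_1,n_2,d_1,d_2)=(1,1,2,2)$ with $\mathbf r=(1,1)$ shows—its two elements may be witnessed by two \emph{different} assignments of blocks to the two constraints, so any bound from a single assignment sees only one of them and the slick argument of the second paragraph fails. I would therefore compute $\#\mathcal{G}=C-A-B+1$ directly, splitting into the exhaustive subcases: (a) some interior block ($0<r_i<d_i$) has $n_i\ge2$, where a gap monomial in that block times a free choice in the other already gives at least $c_j\ge2$ elements; (b) both blocks lie on the boundary ($r_i\in\{0,d_i\}$), where $C=AB$ and $(A-1)(B-1)\ge2$ unless both $c_i=2$; (c) exactly one interior block, of size $n=1$; and (d) both blocks interior with $n_1=n_2=1$. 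In (c) and (d) the counts reduce to expressions of the shape $r_1(c_2-1)-1$ and $r_1'r_2+r_1r_2'-1$ (with $r_i'=d_i-r_i$), which are nonnegative, and in fact $\ge1$ except precisely when $n_1=n_2=1$ with one of the $d_i$ equal to $1$; the exceptions in (b) likewise force $n_1=n_2=1$ with $d_1=d_2=1$. These degenerate configurations are exactly those excluded by hypothesis (ii). The hard part will be the bookkeeping in (b)--(d): keeping track of which block witnesses which constraint so that the two guaranteed monomials are never accidentally identified, and verifying that the failure loci match the stated exceptions $(2,2)$ and ``$n_1=n_2=1$ with a degree-one factor.''
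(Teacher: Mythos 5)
Your proposal is correct, and its core reduction is the same as the paper's: the paper works with $k$-tuples of $n_i$-element subsets of $\{1,\dots,n_i+d_i\}$ (the stars-and-bars avatar of your monomials), defines subsets $S_1,S_2$ of the full set $S$ counted by the two products on the left-hand side of the inequality, notes $|S_1\cap S_2|=1$, and reduces to exhibiting at least two elements of $S\setminus(S_1\cup S_2)$ --- precisely your reformulation $\#\mathcal{G}\ge 2$. Your cases $k=1$ and $k\ge 3$ are also essentially the paper's arguments (its $k=1$ count is $r_1(d_1-r_1)\binom{n_1}{2}$, matching yours; for $k\ge3$ it uses $\sum_{i\ne j}r_i(d_j-r_j)\ge 2$ where you use a single cross term times a third block's $c_i\ge 2$). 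The genuine divergence is the $k=2$ case, which you rightly flag as the delicate one: rather than computing $C-A-B$ exactly, the paper counts two \emph{disjoint} families of missed elements at once --- those where block $1$ witnesses one constraint and block $2$ the other, and vice versa (its parameter $\delta\in\{0,1\}$), with the exact intersection sizes prescribed so the two families cannot overlap --- obtaining the uniform bound $\bigl(r_1(d_2-r_2)+r_2(d_1-r_1)\bigr)n_1n_2\ge 2$ under hypothesis (ii) in one stroke. This two-assignment device is exactly what repairs the single-assignment failure you diagnosed with your $(n_1,n_2,d_1,d_2)=(1,1,2,2)$, $\mathbf{r}=(1,1)$ example. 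Your subcase computation is a valid substitute: I checked your formulas $(A-1)(B-1)$ in (b), $r_1(c_2-1)-1$ in (c), and $r_1(d_2-r_2)+r_2(d_1-r_1)-1$ in (d), and every exceptional configuration is indeed excluded by hypothesis (ii); your route is more laborious, but it buys exact counts (note your case (d) shows the paper's bound is an equality there) and makes visible that the hypotheses are sharp. One small caveat: your phrase ``except precisely when $n_1=n_2=1$ with one of the $d_i$ equal to $1$'' overstates the failure locus in case (c), where failure additionally requires $r_1=1$ or $d_1-r_1=1$; since hypothesis (ii) excludes the larger set anyway, this is harmless.
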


\begin{proof}
We define
\[ S = \{ (A_1, \ldots, A_k) : A_i \subset \{1, 2, \ldots, n_i + d_i \} 
\text{ with } |A_i| = n_i \} \]
and subsets
\begin{align*}
S_1 &= \{ (A_1, \ldots, A_k) \in S : A_i \cap \{1, \ldots, r_i\} 
  = \emptyset \text{ for all } 1 \le i \le k \}, \\
S_2 &= \{ (A_1, \ldots, A_k) \in S : A_i \cap \{r_i + 1, \ldots, d_i\} 
  = \emptyset \text{ for all } 1 \le i \le k \}. 
\end{align*}
Then $|S_1 \cap S_2| = 1$. To prove $|S_1| + |S_2| < |S|$ it suffices
to show that $|S \setminus (S_1 \cup S_2)| \ge 2$.
\begin{enumerate}[itemsep=0.5\baselineskip]
\item If $k=1$ then by choosing $A_1$ with
\[ |A_1 \cap \{1, \ldots, r_1\}| = 1 \quad \text{ and } \quad  
|A_1 \cap \{r_1+1, \ldots, d_1\}| = 1 \]
(and so $|A_1 \cap \{ d_1 + 1, \ldots , n_1 + d_1 \}| = n_1 - 2$)
we have $|S \setminus (S_1 \cup S_2)| \ge r_1(d_1 - r_1) \binom{n_1}{2}$,
which is at least 2 under the stated hypotheses.

\item If $k=2$ then by choosing $(A_1,A_2)$ with
\begin{align*}
|A_1 \cap \{1, \ldots, r_1\}| &= \delta, &
|A_1 \cap \{r_1+1, \ldots, d_1\}| &= 1-\delta, \\
|A_2 \cap \{1, \ldots, r_2\}| &= 1-\delta, &
|A_2 \cap \{r_2+1, \ldots, d_2\}| &= \delta, 
\end{align*}
for $\delta = 0,1$, we have 
$|S \setminus (S_1 \cup S_2)| \ge (r_1(d_2 - r_2) + r_2(d_1 - r_1))n_1 n_2$,
which is at least 2 under the stated hypotheses.

\item If $k \ge 3$ then $|S \setminus (S_1 \cup S_2)| \ge \sum_{i \not= j}
r_i(d_j - r_j) \ge 2$. \qedhere

\end{enumerate}
\end{proof}

We also discuss the analogue of the conjecture of Poonen--Voloch in our setting:
\begin{conjecture}  \label{conj:Hassehypersurface}
For any positive integers $d_i, n_i$ for $i \in \{1,\ldots,k\}$, consider the family of multidegree $(d_1, \ldots, d_k)$-hypersurfaces in $\PP^{n_1} \times \cdots \times \PP^{n_k}$ over $\Q$. Then
\begin{enumerate}
\item If $d_i > n_i+1$ for all $i = 1, \ldots, k$, then $100\%$ of everywhere locally soluble $(d_1, \ldots, d_k)$-hypersurfaces fail the Hasse principle.  \label{conj:Hassegeneraltype}
\item If $d_i < n_i+1$ for some $i = 1, \ldots, k$, then $100\%$ of everywhere locally soluble $(d_1, \ldots, d_k)$-hypersurfaces satisfy the Hasse principle.
\label{conj:HasseRC}
\end{enumerate}
\end{conjecture}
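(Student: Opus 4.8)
The statement is a conjecture rather than a theorem, and a complete proof lies beyond current technology: part~(\ref{conj:Hassegeneraltype}) subsumes the (open) Poonen--Voloch prediction \cite{PV} that $0\%$ of degree-$d$ hypersurfaces in $\PP^n$ are globally soluble when $d>n+1$, while part~(\ref{conj:HasseRC}) subsumes the Hasse principle for almost all Fano hypersurfaces, proved only recently by Browning--Le Boudec--Sawin \cite{TimPierreWill} in the range $n\ge d+2$. Accordingly, what I can propose is a strategy reducing each part to these benchmark inputs. In both parts the endgame is the same: by Theorem~\ref{thm:localproportion} the everywhere-locally-soluble proportion equals $c=\prod_\nu c_\nu>0$ in the applicable ranges, so it suffices to control $N(H)$, the count of globally soluble forms. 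Part~(\ref{conj:Hassegeneraltype}) asks for the upper bound $N(H)=o(\Ntot(H))$, and part~(\ref{conj:HasseRC}) for the matching asymptotic $N(H)\sim c\,\Ntot(H)$.

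\emph{Part~(\ref{conj:Hassegeneraltype}): the general-type range.} Here I would count incidences between forms and their rational points. A rational point $P=(P_1,\ldots,P_k)\in\mathscr{P}(\Q)$, written with primitive integral representatives of heights $T_i=\|P_i\|$, imposes the single linear condition $f(P)=0$ on the $m$ coefficients of $f$; the coefficient vector of this condition consists of the monomial values $\prod_{i,j}P_{ij}^{e_{ij}}$, of size $\asymp\prod_i T_i^{d_i}$. Estimating integral solutions of bounded height to one such equation and summing over $P$ with the Schanuel-type count $\#\{P_i:\|P_i\|\le T_i\}\asymp T_i^{n_i+1}$ yields a bound for $N(H)$ as a multi-sum over $(T_1,\ldots,T_k)$. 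The hypothesis $d_i>n_i+1$ for \emph{all} $i$ is exactly what forces the exponents in this sum to make it subdominant relative to $\Ntot(H)=(2\lfloor H\rfloor+1)^m$. The main obstacle is that this remains heuristic until one controls the contribution of points whose height satisfies $\prod_i T_i^{d_i}\asymp H$; in the single-factor case $k=1$ this boundary regime is precisely the unresolved core of the Poonen--Voloch conjecture, so a rigorous argument would require genuinely new input (for instance strong determinant-method or circle-method bounds) rather than a rearrangement of the counting.

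\emph{Part~(\ref{conj:HasseRC}): the Fano-in-one-factor range.} Suppose without loss of generality that $d_1\le n_1$. I would fibre the problem over the remaining factors: for a rational point $Q=(P_2,\ldots,P_k)\in\PP^{n_2}\times\cdots\times\PP^{n_k}$, the restriction $F_Q:=F(\,\cdot\,,P_2,\ldots,P_k)$ is a degree-$d_1$ form on $\PP^{n_1}$, which is Fano because $d_1\le n_1$; if some $F_Q$ has a $\Q$-point then so does $F$. The plan is to show that for $100\%$ of everywhere locally soluble $F$ one can choose $Q$ so that $F_Q$ is itself everywhere locally soluble, and then satisfies the Hasse principle by the circle method in the range $n_1\ge d_1+2$ \cite{TimPierreWill} or by the corresponding conjecture otherwise. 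Combined with the ELS proportion $c$ from Theorem~\ref{thm:localproportion}, this gives $N(H)\sim c\,\Ntot(H)$ and hence the claim.

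The hard part is twofold. First, one must produce, for almost all $F$, a single fibre $F_Q$ that is everywhere locally soluble and to which the circle method applies \emph{uniformly} across the family; this is where one imports the full strength of the recent sieve/circle-method machinery rather than a pointwise Hasse-principle statement, and controlling the dependence on the coefficients of $F$ is the crux. Second, one must account for the excluded tuples $(k,n_1,n_2,d_1,d_2)=(2,1,2,\ge2,2)$ and $(2,2,1,2,\ge2)$. In these cases the only Fano fibration direction produces conics or binary forms, and the ambient geometry is close enough to the bidegree-$(2,2)$ curves studied in this paper that the Hasse principle genuinely fails for a positive proportion---consistent with the positive-proportion failure \cite{BH-hasse22} exhibited for the neighbouring family---so the conclusion of~(\ref{conj:HasseRC}) is simply false there. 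Any proof must therefore use the excluded-case hypotheses precisely to detect and rule out this Brauer--Manin-type obstruction, which is the subtlest part of the statement to justify.
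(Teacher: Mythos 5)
This statement is a conjecture, and the paper accordingly offers heuristic motivation rather than a proof; your proposal should be measured against that motivation. For part~(\ref{conj:Hassegeneraltype}) your strategy is essentially the paper's: the paper fixes a rational point $a$, counts forms of height at most $H$ vanishing at $a$ as lattice points on a hyperplane (getting $c(a)H^{m-1}/\phi(a) + O(H^{m-2})$, with $\phi(a)$ the covolume of the lattice), factors $\phi(a) = \prod_i \phi_i(a_i)$, and applies AM--GM to show that $\sum_a 1/\phi(a)$ converges precisely because $d_i > n_i+1$ for \emph{all} $i$, whence $N(H) = O(H^{m-1}) = o(\Ntot(H))$. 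Your incidence count over points of multiheight $(T_1,\ldots,T_k)$ is the same computation in different clothing, with the same honestly acknowledged heuristic status.

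For part~(\ref{conj:HasseRC}) you take a genuinely different route, and it has a genuine gap at its central step. The paper does not try to manufacture rational points by fibering and the circle method; it argues geometrically and conditionally: Proposition~\ref{prop:RCmultideg} shows (via Fano fibers and Graber--Harris--Starr \cite{graberharrisstarr}) that $d_i < n_i+1$ for some $i$ forces $X$ to be rationally connected; Proposition~\ref{prop:BMvacuous} shows that for smooth complete intersections of dimension $\geq 3$ in $\mathscr{P}$ the map $\Br K \to \Br X$ is an isomorphism, so the Brauer--Manin obstruction is vacuous; hence, conditional on Colliot-Th\'el\`ene's conjecture \cite{CT-conj03}, the Hasse principle holds for \emph{all} such $X$ of dimension $\geq 3$, with the low-dimensional cases handled separately (degree-one factors, quadrics, cubic surfaces via \cite{PV}, and the excluded conic-bundle cases via Iskovskikh \cite{iskovskikh}). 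Your fibration plan founders exactly where you flag "the crux": everywhere local solubility of $F$ does \emph{not} imply the existence of a rational $Q$ with $F_Q$ everywhere locally soluble. Iskovskikh's surface --- which is precisely the excluded tuple $(k,n_1,n_2,d_1,d_2)=(2,2,1,2,4)$ --- is an everywhere locally soluble conic bundle over $\PP^1$ with no rational points; since each of its fibers over a rational point is a conic and conics satisfy the Hasse principle, \emph{no} fiber over a rational point is everywhere locally soluble. So the implication you need is exactly what the Brauer--Manin obstruction blocks, and proving it for $100\%$ of forms outside the excluded cases is the whole content of the conjecture; it would require fibration-method input (Schinzel-type hypotheses or Harpaz--Wittenberg descent-fibration techniques), not merely a uniform version of \cite{TimPierreWill} on the fibers. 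Relatedly, your account of the excluded tuples misidentifies the phenomenon: it is not proximity to the bidegree $(2,2)$ curves (those sit on the Calabi--Yau boundary $d_i = n_i+1$, outside both parts of the conjecture, and \cite{BH-hasse22} concerns that family), but rather the classical Brauer--Manin obstruction on conic bundles over $\PP^1$, which is invisible to any argument that only uses the Hasse principle fiberwise.
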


Combining Conjecture \ref{conj:Hassehypersurface}(\ref{conj:HasseRC}) with Theorem \ref{thm:localproportion} implies that the proportion of globally soluble hypersurfaces in $\mathscr{P}$ is a product of local densities for many cases (the last part of Conjecture \ref{conj:PVgeneralization}):  If $d_i < n_i+1$ for some $i = 1, \ldots, k$, we have that
\begin{equation} \label{eq:globalequalslocal}
\lim_{H \to \infty} \frac{N(H)}{\Ntot(H)} = c = \prod_\nu c_\nu. 
\end{equation}

\begin{remark}
  If $d_i < n_i + 1$ for some $i$, and yet the hypotheses of Theorem
  \ref{thm:localproportion} are not satisfied then either some
  $d_i = 1$ or $(k,n_1,d_1) = (1,2,2)$. In the first case both sides
  of \eqref{eq:globalequalslocal} are $1$.  In the second case
  concerning plane conics,
  see, e.g., Theorem 2 of \cite{bcfjk}, which states that
  $c_p =1 - \frac{p}{2(p+1)^2}$ for $p$ a prime; and immediately implies
  the result of Serre \cite{serre} 
  that both sides of \eqref{eq:globalequalslocal} are $0$ in this case.
\end{remark}

\subsection{Motivation for Conjecture \ref{conj:Hassehypersurface}(\ref{conj:Hassegeneraltype})}
Fix a point $a = (a_1,\ldots,a_k) \in \Z^{n_1+1} \times \cdots \times \Z^{n_k + 1}$ with each $a_i \neq 0$ and having coprime coordinates, and consider the set of multidegree $(d_1, \ldots, d_k)$-polynomials vanishing on $a$. The set of these polynomials form a hyperplane in $\Z^m$, and to count the multidegree $(d_1, \ldots, d_k)$ polynomials up to height $H$ vanishing on $a$ is to count the set of integral points contained in this hyperplane, whose coordinates are bounded by $H$. As in \cite{PV}, the number of integral points of height at most $H$ in this hyperplane is given by
 \[
 \frac{c(a)H^{m-1}}{\phi(a)} + O(H^{m-2}),
 \]
where $\phi(a)$ denotes the covolume of the lattice of integer points on the hyperplane of polynomials vanishing on $a$, and $c(a)$ denotes the $(m-1)$-dimensional volume of the part of the hyperplane inside $[-1,1]^m$. 

Ignoring the error term, we get that
\[
N(H) \leq H^{m-1} \sum_{a} \frac{c(a)}{\phi(a)},
\]
where the sum ranges over $a \in \Z^{n_1+1} \times \cdots \times \Z^{n_k + 1}$, excluding the zero vectors in each of the components. Since $c(a)$ is bounded by definition, it remains to understand the convergence of $\sum_{a} \frac{1}{\phi(a)}$. Lemma 3.1 of \cite{PV} shows that $\phi(a)$ equals the Euclidean norm of the vector $b$ formed by the monomials of degree $(d_1, \ldots, d_k)$ in the coordinates of $(a_1,\ldots, a_k)$ (the coprime condition in that lemma is not needed). 
Let $\phi_i(a_i)$ denote the Euclidean norm of the vector formed by plugging in the coordinates of $a_i$ into each of the degree $d_i$ monomials in $n_i+1$ variables. Because $\phi(a) = \prod_{i=1}^k \phi_i(a_i)$, we have
\begin{equation} \label{eq:sumphi}
\sum_{a \in (\Z^{n_1+1}\setminus\{0\}) \times \cdots \times (\Z^{n_k + 1}\setminus \{0\})} \frac{1}{\phi(a)}
= \left(\sum_{a_1 \in \Z^{n_1+1}\setminus\{0\}} \frac{1}{\phi_1(a_1)}\right) \cdot \cdots \cdot \left(\sum_{a_k \in \Z^{n_k+1}\setminus\{0\}}\frac{1}{\phi_k(a_k)}\right).
\end{equation}
We claim that each of the sums $\sum_{a_i \in \Z^{n_i+1}\setminus\{0\}}\frac{1}{\phi_i(a_i)}$ converges.

We decompose $\Z^{n_i+1}\setminus\{0\} = T_{n_i+1} \cup T_{n_i} \cup \cdots \cup T_1$, where $T_j$ consists of the vectors in $\Z^{n_i+1}$ with exactly $j$ nonzero coordinates. Let $a_i = (a_{i0},\ldots,a_{in_i})$. Then by applying the AM-GM inequality to the definition of $\phi_i(a_i)$, we have
\begin{equation} \label{eq:amgm}
\phi_i(a_i) \geq m_i^{1/2} \left| \prod_{\ell=0}^{n_i} a_{i\ell} \right|^{\frac{d_i}{n_i+1}}.
\end{equation}
In fact, for $a_i \in T_j$, the product over $\ell$ on the right side of \eqref{eq:amgm} may be taken to be the product of only the $j$ nonzero $a_{i\ell}$ if we replace $m_i$ with $m_{ij} := {j + d_i \choose d_i}$ and $n_i+1$ by $j$.
We thus have
\begin{align*}
\sum_{a \in \Z^{n_i+1}\setminus\{0\}}\frac{1}{\phi_i(a_i)} &= \sum_{j = 1}^{n_i+1}\sum_{a_i \in T_j}\frac{1}{\phi_i(a_i)}\\
&\leq \sum_{j = 1}^{n_i+1}\sum_{a_i \in T_j} m_{ij}^{-1/2} \left| \prod_{\ell \colon a_{i\ell} \neq 0} a_{i\ell} \right|^{-\frac{d_i}{j}}.
\end{align*} 
The above sum converges if $d_i > j$ for all $j = 1, \ldots, n_i +1$, so it converges if $d_i > n_i +1$. Thus, the original quantity \eqref{eq:sumphi} converges if $d_i > n_i + 1$ for all $i = 1, \ldots, k$. 

This heuristic therefore predicts $N(H) = O(H^{m-1})$. Combining this with the estimate $\Ntot(H) \sim (2H)^m$ gives Conjecture \ref{conj:Hassehypersurface}(\ref{conj:Hassegeneraltype}). 

\subsection{Motivation for Conjecture \ref{conj:Hassehypersurface}(\ref{conj:HasseRC})}

We may restrict ourselves to only consider polynomials $f$ for which $f = 0$ defines a smooth geometrically integral hypersurface $X$ in $\mathscr{P}$ (see also \cite[Remark 2.1]{PV}).
We use a well known conjecture of Colliot-Th\'el\`ene (see, e.g., \cite{CT-conj03}):

\begin{conjecture}[Colliot-Th\'el\`ene] \label{conj:CT}
Let $X$ be a smooth proper geometrically integral variety over a number field. If $X$ is (geometrically) rationally connected, then the Brauer-Manin obstruction to the Hasse principle for $X$ is the only obstruction.
\end{conjecture}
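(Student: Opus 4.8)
We emphasize at the outset that this is one of the central open conjectures in the arithmetic of rationally connected varieties, so what follows is a description of the strategy one would pursue rather than a proof that can actually be carried out; a complete argument is well beyond what is currently known. Write $K$ for the number field and $\Adeles_K$ for its ring of ad\`eles. The object to control is the Brauer--Manin set $X(\Adeles_K)^{\Br}$, the subset of adelic points orthogonal to $\Br(X)$ under the reciprocity pairing, and the assertion is precisely that $X(\Adeles_K)^{\Br} \neq \emptyset$ forces $X(K) \neq \emptyset$. The plan is to reduce a general smooth proper geometrically rationally connected $X$ to more tractable building blocks, establish the statement for those, and then propagate it along geometric constructions.

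First I would record the base cases that are already available. For $K$-rational varieties, and more generally for Severi--Brauer varieties and quadrics, the conjecture reduces to Hasse--Minkowski together with the theory of central simple algebras. For homogeneous spaces of connected linear algebraic groups it is a theorem, by work of Borovoi building on Kneser, Harder, and Sansuc, and for several classes of del Pezzo surfaces there are partial results obtained through descent. These play the role of the inductive base.

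The main engine for passing beyond the base cases is the \emph{fibration method}: one fibers $X$ over a rational base $\PP^n$ so that the generic fiber already enjoys the desired property, and then assembles local points satisfying the Brauer--Manin condition into a global point, while controlling the arithmetic of the finitely many bad fibers. Here I would invoke the modern incarnations of this technique (Colliot-Th\'el\`ene--Sansuc--Swinnerton-Dyer, and the more recent results of Harpaz--Wittenberg and of Browning--Matthiesen--Skorobogatov), which marry the fibration argument with deep analytic input --- notably additive-combinatorial theorems on prime values of integer polynomials --- in order to handle degeneration above codimension-one loci. Descent theory, replacing $X$ by its universal torsors, is the complementary device that trades the Brauer group for a geometrically simpler cover.

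The decisive obstacle, and precisely the reason the conjecture remains open, is that no uniform mechanism is known to apply to \emph{every} rationally connected variety: the fibration method demands a fibration of a very special shape, with fibers of a controlled type and controlled degeneration, and mere rational connectedness does not manufacture one. Already in dimension two the statement is unresolved --- del Pezzo surfaces of degree $1$, and general surfaces of degrees $2$, $3$, and $4$, are not settled --- so even the two-dimensional case most relevant to the curves studied in this paper lies out of reach. Accordingly, I would not expect this plan to yield a proof in full generality; its realistic outcome is a proof only in those special geometries where a suitable fibration or descent presentation can be produced, which is exactly why the statement is used here as a cited input rather than established.
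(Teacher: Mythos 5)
Your assessment is exactly right: this statement is Colliot-Th\'el\`ene's conjecture, which the paper does not prove either --- it is stated as a conjecture (with a citation to the literature) and used purely as a conditional hypothesis in the motivation for Conjecture~\ref{conj:Hassehypersurface}(\ref{conj:HasseRC}). Your survey of the known partial results and of the fibration/descent strategy is accurate, and your conclusion that no proof is currently possible matches the paper's treatment of the statement as an open, cited input.
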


In particular, we show in Proposition \ref{prop:RCmultideg} that if $X$ is a multidegree $(d_1,\ldots,d_k)$-hypersurface in $\mathscr{P} = \PP^{n_1} \times \cdots \times \PP^{n_k}$ with $d_i < n_i+1$ for some $i$, then $X$ is rationally connected.
We will show in Proposition \ref{prop:BMvacuous} below that the Brauer-Manin obstruction is vacuous for smooth complete intersections (of dimension at least $3$) in $\mathscr{P}$ over number fields. Thus, conditional on Conjecture \ref{conj:CT}, we see that the Hasse principle is satisfied for these $X$ of dimension $\geq 3$.

In almost all cases with $d_i < n_i + 1$ for some $i$ where $X$ has
dimension $1$ or $2$, either some $d_i = 1$ (and so $X$ has a rational
point) or $X$ is a quadric hypersurface (and so is known to satisfy
the Hasse principle). One nontrivial case is handled in the proof of
\cite{PV}*{Proposition~3.4}, which shows, under
Conjecture~\ref{conj:CT}, that the Hasse principle holds for a density
$1$ set of cubic surfaces in $\PP^2$. The last remaining nontrivial case is that of bidegree $(2,d_2)$-surfaces in $\PP^2 \times \PP^1$
with $d_2 \geq 2$. Here, it is possible for the Hasse principle to
not be satisfied (e.g., the famous example of Iskovskih
\cite{iskovskih-counterex} where $d_2 = 4$), but as we explain in Lemma \ref{lem:conicbundle} below, Conjecture \ref{conj:CT} still implies that, for any fixed $d_2 \geq 2$, the
Hasse principle holds for 100\% of these hypersurfaces. The argument, which relies on the generic member of this family having no Brauer-Manin obstruction by \cite[Theorem 2.6]{iskovskih-rationalsurfaces}, is analogous to that for cubic surfaces.

\begin{proposition} \label{prop:RCmultideg}
Fix an integer $k \geq 1$ and positive integers $d_i, n_i$ for $i \in \{1,\ldots,k\}$. Let $X$ be a smooth integral multidegree $(d_1,\ldots,d_k)$-hypersurface in $\PP^{n_1} \times \cdots \times \PP^{n_k}$ over $\C$. Then $X$ is rationally connected if and only if $d_i < n_i+1$ for some $i$.
\end{proposition}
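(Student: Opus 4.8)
The plan is to reduce everything to the canonical class of $X$ together with the two standard results relating positivity of $-K_X$ to rational connectedness. First I would record the canonical bundle. Since $\omega_{\mathscr P} = \mathcal{O}_{\mathscr P}(-n_1-1,\dots,-n_k-1)$ and $X$ is cut out by a section of $\mathcal{O}_{\mathscr P}(d_1,\dots,d_k)$, adjunction gives
\[
\omega_X \;=\; \mathcal{O}_{\mathscr P}(e_1,\dots,e_k)\big|_X, \qquad e_i := d_i - n_i - 1 .
\]
Thus the dichotomy in the statement is exactly ``$e_i < 0$ for some $i$'' versus ``$e_i \ge 0$ for all $i$,'' and I would prove the two implications separately.

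For the ``only if'' direction I would argue by contraposition: assuming $d_i \ge n_i+1$, i.e. $e_i \ge 0$, for every $i$, I would produce a nonzero canonical form. Twisting the ideal sheaf sequence of $X$ by $\mathcal{O}_{\mathscr P}(e_1,\dots,e_k)$ gives
\[
0 \to \mathcal{O}_{\mathscr P}(e_1-d_1,\dots,e_k-d_k) \to \mathcal{O}_{\mathscr P}(e_1,\dots,e_k) \to \omega_X \to 0 .
\]
Every twist $e_i - d_i = -n_i-1$ is negative, so by the K\"unneth formula $H^0(\mathscr P, \mathcal{O}_{\mathscr P}(e_1-d_1,\dots,e_k-d_k)) = 0$; hence the restriction $H^0(\mathscr P, \mathcal{O}_{\mathscr P}(e_1,\dots,e_k)) \hookrightarrow H^0(X,\omega_X)$ is injective, and its source has dimension $\prod_i \binom{n_i+e_i}{n_i} \ge 1$. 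So $h^0(X,\omega_X) \ge 1$, whence $\kappa(X) \ge 0$; since a rationally connected variety is uniruled and therefore has Kodaira dimension $-\infty$, $X$ cannot be rationally connected.

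For the ``if'' direction, suppose $d_i < n_i+1$ for some $i$; after permuting the factors I may assume $i = k$, so $d_k \le n_k$. If $k=1$ this says $X$ is a smooth hypersurface of degree $d_1 \le n_1$ in $\PP^{n_1}$, which is Fano and hence rationally connected by Koll\'ar--Miyaoka--Mori and Campana, so I am done. For $k \ge 2$ I would use the projection $p\colon \mathscr P \to \mathscr P' := \PP^{n_1}\times\cdots\times\PP^{n_{k-1}}$ forgetting the last factor and study $p|_X\colon X \to \mathscr P'$. This map is proper and surjective (the fiber over $t$ is the degree-$d_k$ hypersurface $V(F(t,\cdot)) \subset \PP^{n_k}$, which is never empty and, for general $t$, a proper hypersurface), so by generic smoothness in characteristic $0$ its general fiber is a smooth hypersurface of degree $d_k \le n_k$ in $\PP^{n_k}$---a (possibly linear) Fano variety, hence rationally connected. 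Since the base $\mathscr P'$ is rational and therefore rationally connected, the theorem of Graber--Harris--Starr on rationally connected fibrations lets me conclude that $X$ is rationally connected.

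The elementary half is the ``only if'' direction; the substance is in the ``if'' half. The main obstacle there is not the final invocation of Graber--Harris--Starr but verifying that the fibration $p|_X$ behaves as claimed: I must check that $p|_X$ is dominant with general fiber an honest smooth hypersurface of the expected dimension $n_k-1$ (rather than all of $\PP^{n_k}$, or a singular or degenerate fiber), which is exactly where generic smoothness and the condition $d_k \ge 1$ enter. Once the general fiber is identified as a smooth Fano hypersurface, the two cited theorems do the rest.
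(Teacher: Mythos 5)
Your proposal is correct and follows essentially the same route as the paper: the ``only if'' direction via a nonzero section of $\omega_X$ (the paper simply asserts $H^0(X,\omega_X)\neq 0$, which your adjunction-plus-ideal-sheaf argument makes explicit), and the ``if'' direction via projection away from a factor with $d_i < n_i+1$, generic smoothness identifying the general fiber as a smooth Fano hypersurface rationally connected by Campana and Koll\'ar--Miyaoka--Mori, and the Graber--Harris--Starr theorem applied to the fibration over the (rationally connected) product of the remaining projective spaces.
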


\begin{proof}
One direction is easy: if $d_i > n_i$ for all $i$, then $H^0(X,\omega_X)$ is nonzero, hence $X$ is not rationally connected.
For the reverse direction, without loss of generality, suppose $d_1 < n_1 + 1$. Then consider the projection $\pi: X \to \PP^{n_2} \times \cdots \times \PP^{n_k}$ (note that this codomain is a point if $k =1$). The fibers of this projection $\pi$ are degree $d_1$ hypersurfaces in $\PP^{n_1}$; the general ones are smooth and, by assumption, Fano and thus rationally connected \cite{campana,kollarmiyaokamori}. 

We apply a theorem of Graber--Harris--Starr \cite{graberharrisstarr}, which states that if $X \to Y$ is a dominant morphism of complex varieties where the general fiber and $Y$ are both rationally connected, then $X$ is rationally connected as well.
Here the projection map $\pi$ is surjective by a dimension count, and the codomain $\PP^{n_2} \times \cdots \times \PP^{n_k}$ is rationally connected, so $X$ itself is rationally connected.
\end{proof}

In the following proposition, the notation $\Br Y$ refers to the cohomological Brauer group $H^2_\et(Y,\G_m)$ for a scheme $Y$.

\begin{proposition} \label{prop:BMvacuous}
Let $K$ be a number field. Let $X$ be a smooth complete intersection in a product $\mathscr{P} =  \PP^{n_1} \times \cdots \times \PP^{n_k}$ satisfying $\dim X \geq 3$. Then the natural map $\Br K \to \Br X$ is an isomorphism, hence the Brauer-Manin obstruction for $X$ is vacuous.
\end{proposition}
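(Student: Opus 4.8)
\emph{The plan.} I will compute $\Br X$ through the Hochschild--Serre spectral sequence, reducing the whole statement to the geometry of $\overline{X} := X \times_K \Kbar$ together with the Galois action on $\Pic \overline{X}$, where I write $G_K = \Gal(\Kbar/K)$. The low-degree exact sequence attached to $H^p(G_K, H^q_\et(\overline{X},\G_m)) \Rightarrow H^{p+q}_\et(X,\G_m)$, after using Hilbert 90 to kill $H^1(G_K,\Kbar^\times)$ and identifying $H^2(G_K,\Kbar^\times) = \Br K$, reads
\[
0 \to \Pic X \to (\Pic \overline{X})^{G_K} \to \Br K \to \ker(\Br X \to \Br \overline{X}) \to H^1(G_K, \Pic \overline{X}) \to H^3(G_K,\Kbar^\times).
\]
It therefore suffices to establish three facts: (a) $\Br \overline{X} = 0$, so that $\ker(\Br X \to \Br \overline{X}) = \Br X$; (b) $H^1(G_K, \Pic \overline{X}) = 0$; and (c) the restriction $\Pic X \to (\Pic \overline{X})^{G_K}$ is surjective. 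Granting these, the map $\Br K \to \Br X$ is both injective (by exactness and (c)) and surjective (by exactness and (b)), hence an isomorphism.

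\emph{The Picard group and (b), (c).} First I would pin down $\Pic \overline{X}$ via the Grothendieck--Lefschetz theorem. Each defining multidegree hypersurface of $X$ is an ample divisor in $\overline{\mathscr{P}} = \PP^{n_1}_{\Kbar} \times \cdots \times \PP^{n_k}_{\Kbar}$, so $X$ is a complete intersection of ample divisors in the smooth projective variety $\overline{\mathscr{P}}$; since $\dim X \ge 3$, restriction $\Pic \overline{\mathscr{P}} \to \Pic \overline{X}$ is an isomorphism. As $\Pic \overline{\mathscr{P}} \cong \Z^k$ is freely generated by the pullbacks of the hyperplane classes $\mathcal{O}(1)$ from the factors, we get $\Pic \overline{X} \cong \Z^k$. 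These generators are pullbacks of line bundles defined over $K$ (via the $K$-rational projections), so the $G_K$-action on $\Pic \overline{X}$ is trivial. This yields (b), since $H^1(G_K, \Z^k) = \Hom_{\mathrm{cont}}(G_K, \Z^k) = 0$ as $G_K$ is profinite and $\Z$ is torsion-free, and (c), since each generator lifts to a class in $\Pic X$.

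\emph{The geometric Brauer group and (a).} Next I would deduce the vanishing of $\Br \overline{X}$ by transporting the corresponding statement from the ambient product. The weak Lefschetz theorem in \'etale cohomology gives, for $\dim X \ge 3$ and every $n$ (all $n$, as $\Char \Kbar = 0$), an isomorphism $H^2_\et(\overline{\mathscr{P}}, \mu_n) \isomto H^2_\et(\overline{X}, \mu_n)$ compatible with cycle class maps. For the product one computes by K\"unneth that $H^2_\et(\overline{\mathscr{P}},\mu_n) \cong (\Z/n)^k$, generated by the hyperplane classes, so $\Pic \overline{\mathscr{P}}/n \to H^2_\et(\overline{\mathscr{P}},\mu_n)$ is an isomorphism. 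Combining the commutative square of restriction maps (the two vertical Lefschetz isomorphisms on $\Pic/n$ and on $H^2_\et$, the top horizontal isomorphism just noted) shows $\Pic \overline{X}/n \to H^2_\et(\overline{X},\mu_n)$ is an isomorphism as well. The Kummer sequence $0 \to \Pic \overline{X}/n \to H^2_\et(\overline{X},\mu_n) \to \Br \overline{X}[n] \to 0$ then forces $\Br \overline{X}[n] = 0$ for all $n$; since $\overline{X}$ is smooth, $\Br \overline{X}$ is torsion, so $\Br \overline{X} = 0$, giving (a).

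\emph{Conclusion and the main obstacle.} Feeding (a)--(c) into the exact sequence yields $\Br K \isomto \Br X$. The vacuousness of the Brauer--Manin obstruction is then formal: any $\alpha \in \Br X$ is the image of some $\beta \in \Br K$, so for every adelic point $(x_\nu) \in X(\Adeles_K)$ the local evaluations $\alpha(x_\nu)$ equal the constant classes $\beta_{K_\nu}$, and $\sum_\nu \inv_\nu(\beta_{K_\nu}) = 0$ by the reciprocity law of global class field theory; hence $X(\Adeles_K)^{\Br} = X(\Adeles_K)$. The one genuinely non-formal input---and thus the main obstacle---is the invocation of the two Lefschetz theorems in the \emph{product} setting rather than in a single $\PP^N$: one must confirm that a complete intersection in $\mathscr{P}$ really is a complete intersection of ample divisors (so both Grothendieck--Lefschetz for $\Pic$ and weak Lefschetz for $H^2_\et$ apply with the sharp bound $\dim X \ge 3$) and that $H^2_\et(\overline{\mathscr{P}},\mu_n)$ is spanned by divisor classes. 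Everything else is homological algebra and reciprocity.
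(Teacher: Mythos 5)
Your proposal is correct and takes essentially the same route as the paper's proof: the identical low-degree exact sequence (which the paper derives from the Leray spectral sequence), Grothendieck--Lefschetz to identify $\Pic X_{\Kbar}$ with $\Pic \mathscr{P}_{\Kbar} \cong \Z^k$ carrying trivial Galois action (giving both the Picard isomorphism and $H^1(K,\Pic X_{\Kbar})=0$), and weak Lefschetz combined with K\"unneth and the Kummer sequence to show $\Br X_{\Kbar} = 0$. The only differences are cosmetic: you work with $\mu_n$ for all $n$ rather than $\Z/\ell\Z$ for primes $\ell$, and you make explicit the ampleness hypothesis and the final reciprocity argument, both of which the paper leaves implicit.
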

\begin{proof}
We need only slightly modify the proof in \cite[Appendix A and B]{PV} for the same result where $X$ is a smooth complete intersection in a single projective space $\PP^n$ (see also \cite[Proposition 2.6]{Schindler} for a smooth complete intersection in the product of two projective spaces). We summarize the argument here. We use the low degree exact sequence from the Leray spectral sequence
\begin{equation} \label{eq:lowdegexactseq}
0 \to \Pic X \to (\Pic X_{\overline{K}})^{G_K} \to \Br K \to \ker (\Br X \to \Br  X_{\overline{K}}) \to H^1(K, \Pic  X_{\overline{K}})
\end{equation}
where $G_K = \Gal(\overline{K}/K)$. We need to check that $ \Pic X \to (\Pic X_{\overline{K}})^{G_K}$ is an isomorphism, $H^1(K, \Pic  X_{\overline{K}}) = 0$, and $\Br X_{\overline{K}} = 0$.

The restriction map $\Pic \mathscr{P}_{\overline{K}} \to \Pic X_{\overline{K}}$ is an isomorphism since $X$ is a smooth complete intersections of dimension at least $3$ \cite{hartshorne-amplebook}*{Corollary 3.3}. Then since $\Pic \mathscr{P} \to \Pic \mathscr{P}_{\overline{K}}$ is an isomorphism, we find that the injections
\[ \Pic X \hookrightarrow (\Pic X_{\overline{K}})^{G_K} \hookrightarrow \Pic X_{\overline{K}} \]
are isomorphisms and $H^1(K, \Pic X_{\overline{K}}) = 0$.

To show that $\Br X_{\overline{K}} = 0$ here, we use the Kummer sequence to obtain, for any prime $\ell$,
\[
\xymatrix{
0 \ar[r] & (\Pic \mathscr{P}_{\overline{K}})/ \ell \ar[r]  \ar[d] & H^2(\mathscr{P}_{\overline{K}},\Z/\ell\Z) \ar^{\psi}[d] \\
0 \ar[r] & (\Pic X_{\overline{K}})/ \ell \ar[r] & H^2 (X_{\overline{K}},\Z/\ell\Z) \ar[r] & (\Br X_{\overline{K}})[\ell] \ar[r] & 0.
}
\]
The top horizontal injection is in fact an isomorphism, since both groups are rank $k$ over $\Z/\ell\Z$ (using the Kunneth formula to compute $H^2(\mathscr{P},\Z/\ell\Z)$). By a version of Weak Lefschetz (see, e.g., \cite{PV}*{Corollary B.5} with $V = \mathscr{P}$), the vertical map $\psi$ is an isomorphism, so we have $(\Br X_{\overline{K}})[\ell] = 0$ for all $\ell$. Since $\Br X_{\overline{K}}$ is torsion, it is in fact $0$.

Thus, from \eqref{eq:lowdegexactseq}, the map $\Br K \to \Br X$ is an isomorphism, so no elements of $\Br X$ obstruct rational points on $X$.
\end{proof}

\begin{lemma} \label{lem:conicbundle}
Assume Conjecture \ref{conj:CT}. Fix an integer $d \geq 2$ and consider the family of bidegree $(2,d)$ hypersurfaces in $\PP^2 \times \PP^1$. Then $100\%$ of everywhere locally soluble such hypersurfaces satisfy the Hasse principle.
\end{lemma}
\begin{proof}
These bidegree $(2,d)$ surfaces $S$ are conic bundles over $\PP^1$, say given by equations of the form $f(x_1,x_2,x_3,y_1,y_2) = 0$, where $x_1, x_2, x_3$ and $y_1, y_2$ are coordinates for $\PP^2$ and $\PP^1$, respectively. We may also represent $f$ as a symmetric $3 \times 3$ matrix $M(y_1,y_2)$ of degree $d$ forms in $y_1, y_2$, i.e., such that $f(x,y) = \vec{x}^{\text{T}}M \vec{x}$. Let $Z$ be the singular locus in $\PP^1$ of the fibration $S \to \PP^1$; explicitly, $Z$ is defined by the determinant of the matrix $M(y_1,y_2)$, which has degree $3d$ in $y_1, y_2$, and degree $3$ in the coefficients $a_1,\ldots,a_{6(d+1)}$ of $f$.

For the generic member of this family over $\Q(a_1,\ldots,a_{6(d+1)})$, we claim that $Z$ is irreducible. This follows, e.g., from specialization, since for a prime $p$, the determinant of the $3 \times 3$ matrix
$$\begin{pmatrix} 0 & y_2^d & y_1^d \\ y_2^d & y_1^d & 0 \\ y_1^d & 0 & p y_2^d \end{pmatrix}$$
is irreducible by Eisenstein's criterion. Thus, the singular locus is irreducible for the generic surface over $\Q(a_1,\ldots,a_{6(d+1)})$, and by Hilbert irreducibility, the same is true for a density $1$ set of such surfaces over $\Q$. Finally, if $Z$ is irreducible, Theorem 2.6 of \cite{iskovskih-rationalsurfaces} immediately implies that there is no Brauer-Manin obstruction for the surface $S$, so by Conjecture \ref{conj:CT}, the Hasse principle holds.
\end{proof}

\section{Counting polynomials over finite fields}
\label{sec:count}

Our strategy for proving Theorems \ref{thm:loc} and \ref{thm:global} can be viewed as an extension of Hensel's lemma. If the reduction of the $(2,2)$-form mod $p$ has a smooth $\F_p$-point, then Hensel's lemma implies that this can be lifted to a $\Q_p$-point; conversely, if the reduction of the $(2,2)$-form has no $\F_p$-points, then clearly it has no $\Q_p$-points. The $p$-adic solubility of many $(2,2)$-forms can be determined in this way, so it is crucial to understand $(2,2)$-forms over finite fields. We work over a finite field $\F_q$, where $q$ is a power of a prime $p$, for this section, since it is no extra work to do so, though we will only need these results over $\F_p$ for the later sections.

\subsection{Preliminaries}

Many $(2,2)$-forms modulo $p$ contain binary quadratic forms as one of its factors, so we collect some relevant calculations here. These are easy to check.

\begin{lemma}\label{L: affinebinquad}
Of the $q^2$ monic quadratic polynomials $f \in \F_q[X]$,
\begin{itemize}
\item $q(q-1)/2$ have two distinct roots in $\F_q$;
\item $(q^2-q)/2$ have conjugate roots in $\F_{q^2}$;
\item $q$ have a double root in $\F_q$.
\end{itemize}
\end{lemma}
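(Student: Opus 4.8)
The plan is to classify the monic quadratics by the multiset of their roots in $\overline{\F_q}$. Writing $f = X^2 + bX + c$ with $b, c \in \F_q$ immediately shows there are exactly $q^2$ monic quadratics, consistent with the claimed total $q(q-1)/2 + (q^2-q)/2 + q = q^2$; the real content is to distribute these $q^2$ polynomials among the three factorization types.

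First I would use the fact that a monic quadratic over $\F_q$ is uniquely determined by its multiset $\{\rho_1, \rho_2\}$ of roots in $\overline{\F_q}$, and that such a multiset arises from a polynomial over $\F_q$ precisely when it is stable under the Frobenius $x \mapsto x^q$. This gives three mutually exclusive and exhaustive cases. If both roots lie in $\F_q$ and are distinct, then $f$ is determined by an unordered pair of distinct elements of $\F_q$, giving $\binom{q}{2} = q(q-1)/2$ polynomials. If the two roots coincide, then $f = (X - \alpha)^2$ for a unique $\alpha \in \F_q$, giving $q$ polynomials. Finally, if the roots do not lie in $\F_q$, then Frobenius-stability forces them to form a conjugate pair $\{\gamma, \gamma^q\}$ with $\gamma \in \F_{q^2} \setminus \F_q$; there are $q^2 - q$ such $\gamma$, and since each irreducible quadratic corresponds to exactly two of them, this yields $(q^2 - q)/2$ polynomials.

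Alternatively, for odd $q$ one could argue via the discriminant $\Delta = b^2 - 4c$: for each fixed $b$ the map $c \mapsto \Delta$ is a bijection of $\F_q$, so among the $q$ values of $c$ exactly one gives $\Delta = 0$ (double root), $(q-1)/2$ give nonzero squares (two distinct roots in $\F_q$), and $(q-1)/2$ give nonsquares (conjugate roots in $\F_{q^2}$); summing over $b$ recovers the same three counts. I would nonetheless prefer the root-counting argument, since it is characteristic-free and handles even $q$ without modification, whereas the discriminant argument breaks down when $4$ is not invertible.

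There is no genuine obstacle here; the statement is routine, and the only points requiring care are that the three cases are truly disjoint and exhaustive and that the conjugate-pair count is divided correctly by $2$. The identity $q(q-1)/2 + (q^2-q)/2 + q = q^2$ serves as a final consistency check confirming that every monic quadratic has been counted exactly once.
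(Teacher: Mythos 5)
Your proof is correct. The paper offers no argument for this lemma at all (it is dismissed with ``These are easy to check''), so there is nothing to compare against; your root-multiset classification via Frobenius stability is the standard complete argument, and your remark that it works uniformly in characteristic $2$ --- where the discriminant argument would fail --- is exactly the right point of care, since the paper does use these counts at $p=2$.
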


\begin{lemma}
\label{lem:countbq}
Of the $q^3$ binary quadratic forms $f \in \F_q[X,Y]$,
\begin{itemize}
\item $(q-1)(q+1)q/2$ have two distinct roots in $\F_q$;
\item $(q-1)(q^2-q)/2$ have two conjugate roots in $\F_{q^2}$;
\item $(q-1)(q+1)$ have a double root in $\F_q$;
\item $1$ is the zero form.
\end{itemize}
\end{lemma}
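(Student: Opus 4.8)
The plan is to parametrize the nonzero binary quadratic forms by the data of their zeros on $\PP^1$, reducing the count to an enumeration of effective degree-$2$ divisors. A nonzero form $f = aX^2 + bXY + cY^2$ factors over $\Fbar_q$ as a product of two (not necessarily distinct) linear forms, and hence determines an effective divisor $D_f$ of degree $2$ on $\PP^1_{\Fbar_q}$, namely its scheme of zeros. Since $f$ has coefficients in $\F_q$, this divisor is stable under Frobenius, i.e.\ it is an $\F_q$-rational effective divisor of degree $2$. Conversely, every such divisor is cut out by a binary quadratic form over $\F_q$ that is unique up to scaling by $\F_q^\times$. First I would establish that $f \mapsto D_f$ induces a bijection between the set of nonzero forms modulo the scaling action of $\F_q^\times$ and the set of $\F_q$-rational effective divisors of degree $2$ on $\PP^1$.

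Granting this, the next step is to sort the divisors into the three geometric types and count each. Using $|\PP^1(\F_q)| = q+1$, the divisors $P + Q$ with $P \neq Q$ both $\F_q$-rational number $\binom{q+1}{2} = q(q+1)/2$, and the divisors $2P$ with $P$ rational number $q+1$; the conjugate pairs $P + \Frob(P)$ with $P \in \PP^1(\F_{q^2}) \setminus \PP^1(\F_q)$ number $\tfrac12\bigl((q^2+1) - (q+1)\bigr) = (q^2-q)/2$, using $|\PP^1(\F_{q^2})| = q^2 + 1$. Multiplying each of these three totals by the orbit size $q - 1$ (every nonzero form has trivial stabilizer under scaling) and adjoining the single zero form recovers the four quantities in the statement; summing them gives $q^3$, which serves as a consistency check.

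The one point requiring care is the descent claim in the first step, that an $\F_q$-rational degree-$2$ divisor is always cut out by a form defined over $\F_q$, not merely over $\Fbar_q$. For the split and double-root types this is immediate, since the linear forms are themselves $\F_q$-rational; for the conjugate type one multiplies a linear form $\ell$ over $\F_{q^2}$ by its Frobenius conjugate $\Frob(\ell)$, and $\ell \cdot \Frob(\ell)$ has coefficients fixed by Frobenius, hence lies in $\F_q[X,Y]$. Beyond this, the argument is purely combinatorial.

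Alternatively—and this is the route suggested by the placement of Lemma~\ref{L: affinebinquad}—one can dehomogenize. Forms with nonzero $X^2$-coefficient biject with pairs consisting of a scalar in $\F_q^\times$ and a monic quadratic $g \in \F_q[X]$, whose affine root structure matches the form's zeros away from $[1:0]$, so Lemma~\ref{L: affinebinquad} counts these directly; the forms with vanishing $X^2$-coefficient all have $Y$ as a factor and are enumerated by hand. The only bookkeeping is then in tracking the point at infinity $[1:0]$, which supplies the extra distinct-root and double-root forms (namely $Y(bX + cY)$ with $b \neq 0$, and $cY^2$) needed to reconcile the affine counts with the projective ones. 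Either way the computation is short, consistent with the assertion that these facts are easy to check.
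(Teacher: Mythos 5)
Your proposal is correct. Note that the paper offers no proof of this lemma at all---it simply remarks before the two counting lemmas that ``these are easy to check''---so there is nothing to compare line by line; your write-up fills in the details the authors left implicit. Both of your routes are valid. The divisor-theoretic one is clean: nonzero forms modulo $\F_q^\times$-scaling biject with $\F_q$-rational effective degree-$2$ divisors on $\PP^1$ (injectivity from unique factorization into linear forms over $\Fbar_q$, surjectivity from your descent observation that $\ell\cdot\Frob(\ell)$ has Frobenius-fixed coefficients), every nonzero form has trivial scaling stabilizer so each orbit has size $q-1$, and the three divisor types number $q(q+1)/2$, $(q^2-q)/2$, and $q+1$ respectively, giving exactly the stated totals; the sum $(q^3-q)/2+(q^3-2q^2+q)/2+(q^2-1)+1=q^3$ confirms the bookkeeping. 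Your second route, via dehomogenization, is almost certainly what the authors had in mind given that Lemma~\ref{L: affinebinquad} immediately precedes this one: forms with nonzero $X^2$-coefficient are a scalar times a monic quadratic, contributing $(q-1)\cdot q(q-1)/2$, $(q-1)\cdot(q^2-q)/2$, and $(q-1)\cdot q$ to the three cases, and the forms divisible by $Y$ contribute the remaining $q(q-1)$ split forms and $q-1$ double-root forms, reconciling the two tables. The only substantive point either way is the descent claim for the irreducible case, which you address correctly.
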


\subsection{Reducible $(2,2)$-forms}

Now we look at the bihomogeneous polynomials of bidegree $(d_1,d_2)$ in $\F_q[X_0, X_1; Y_0,Y_1]$ for $0 \leq d_0, d_1 \leq 2$, starting with the irreducible $(d_1, d_2)$-forms with $(d_1, d_2) \neq (2,2)$. These correspond to $(d_1,d_2)$-curves in $\PP^1 \times \PP^1$, where we assume that $(X_0, X_1)$ corresponds to the coordinates in the first factor of $\PP^1$ and $(Y_0, Y_1)$ corresponds to the coordinates in the second $\PP^1$.

\begin{lemma}
\label{count:irred}
The number of irreducible bihomogeneous polynomials in $\F_q[X_0,X_1;Y_0,Y_1]$ 
(with bidegrees as indicated) 
are as follows:

\[ \begin{array}{c|lll}
\textup{Bidegree} & 
\multicolumn{3}{c}{\textup{Number of forms up to scaling by $\F_q^\times$}}\\
\hline
(1,0) %\textup{ or } (0,1) 
& \hspace{3em} & m_{10} = q+1  \\ 
(2,0) %\textup{ or } (0,2) 
&& m_{20} = (q^2 - q)/2 \\
(1,1) && m_{11} = q^3-q  \\  
(2,1) %\textup{ or } (1,2) 
&& m_{21} = q^5-q^3    
\end{array} \]
\end{lemma}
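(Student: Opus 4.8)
The plan is to compute each of the four counts by reducing it to the factorization behaviour of bihomogeneous forms, using that $\F_q[X_0,X_1,Y_0,Y_1]$ is a UFD in which every bihomogeneous element has bihomogeneous irreducible factors. The bidegree $(1,0)$ count is immediate: every nonzero $(1,0)$-form $aX_0+bX_1$ is irreducible, and up to scaling these are the $q+1$ points of $\PP^1$, so $m_{10}=q+1$. For bidegree $(2,0)$ an irreducible form is exactly an $\F_q$-irreducible binary quadratic, i.e.\ one with conjugate roots in $\F_{q^2}$; Lemma~\ref{lem:countbq} counts $(q-1)(q^2-q)/2$ of these, and dividing by $q-1$ to pass to forms up to scaling gives $m_{20}=(q^2-q)/2$.

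For bidegree $(1,1)$ I would identify a form $\sum_{i,j} a_{ij}X_iY_j$ with its $2\times 2$ coefficient matrix $A=(a_{ij})$ over $\F_q$. Such a form factors (necessarily as a $(1,0)$-form times a $(0,1)$-form) precisely when $A$ has rank $1$. The rank-$1$ matrices number $(q^2-1)(q+1)$, so among the $q^4-1$ nonzero forms there are $(q^4-1)-(q^2-1)(q+1)=q(q-1)^2(q+1)$ irreducible ones, and division by $q-1$ yields $m_{11}=q^3-q$.

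The substantive case is bidegree $(2,1)$, which I would treat by counting reducible forms and subtracting from the total $\tfrac{q^6-1}{q-1}=q^5+q^4+q^3+q^2+q+1$ of all $(2,1)$-forms up to scaling. Since the $Y$-degree is $1$, in any irreducible factorization exactly one factor has positive $Y$-degree, and tabulating the possible bidegrees shows the reducibles fall into exactly three types: (A) an irreducible $(2,0)$-form times a $(0,1)$-form; (B) an irreducible $(1,1)$-form times a $(1,0)$-form; and (C) a product of two $(1,0)$-forms and one $(0,1)$-form. By unique factorization these types are disjoint and each reducible form arises exactly once, so I can count each type as a product of projective counts: Types (A) and (B) contribute $m_{20}(q+1)$ and $m_{11}(q+1)$, while Type (C) contributes $\tfrac{(q+1)(q+2)}{2}(q+1)$, the first factor being the number of size-$2$ multisets of points of $\PP^1$. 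Subtracting the sum of these three from $q^5+q^4+q^3+q^2+q+1$ gives $m_{21}=q^5-q^3$.

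The main obstacle is the $(2,1)$ case, and specifically verifying the completeness and disjointness of the list of reducible types while avoiding double-counting: I must confirm via unique factorization that every reducible form lies in exactly one of A, B, C, and I must treat the repeated $(1,0)$-factor in Type (C) as an unordered multiset rather than an ordered pair (this is where the $\tfrac{(q+1)(q+2)}{2}$, as opposed to $(q+1)^2$, enters). Once these combinatorial points are pinned down, the rest is routine polynomial algebra.
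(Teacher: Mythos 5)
Your proof is correct and essentially identical to the paper's: the substantive $(2,1)$ count is obtained by exactly the same subtraction, $m_{21} = (q^6-1)/(q-1) - m_{10}m_{20} - m_{10}m_{11} - m_{10}^2(m_{10}+1)/2$, using the same three reducible types and the same multiset count $\tfrac{(q+1)(q+2)}{2}$ for the repeated $(1,0)$-factors, while the $(1,0)$ and $(2,0)$ cases match the paper verbatim. The only difference is cosmetic: for bidegree $(1,1)$ you count rank-$1$ coefficient matrices and subtract, whereas the paper identifies irreducible $(1,1)$-forms with graphs of M\"obius maps; these are the same count in disguise, since your number of irreducible forms $(q^4-1)-(q^2-1)(q+1)$ equals $|\GL_2(\F_q)|$, and dividing by $q-1$ gives $|\PGL_2(\F_q)| = q^3-q$.
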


\begin{proof}
  There are $q^2-1$ nonzero $(1,0)$-forms, and so $q+1$ up to scaling.
  The number of irreducible $(2,0)$-forms is taken from
  Lemma~\ref{lem:countbq}, taking into account scaling. The
  coefficients of a $(1,1)$-form may naturally be arranged as a 2 by 2
  matrix, and the form is irreducible if and only if this matrix is
  nonsingular. Therefore each irreducible $(1,1)$-form defines the graph of
  a Mobius map, and so $m_{11} = |\PGL_2(\F_q)| = q^3 - q$.  Finally
  we compute
  \[ m_{21} = (q^6-1)/(q-1) - m_{10}m_{11} - m_{10} m_{20} -
  m_{10}^2(m_{10}+1)/2 = q^5-q^3. \qedhere \]
\end{proof}

We now consider all the different ways in which a $(2,2)$-form can factor.
We use the notation $(a_1,b_1)^{e_1}\cdots (a_r,b_r)^{e_r}$ to denote the bidegrees of the irreducible factors, with multiplicity.
For example, the {\em factorization type} $(1,0)^2(0,1)(0,1)$ indicates that 
the $(2,2)$-form factors as a product $F_{10}^2 F_{01} G_{01}$, where 
$F_{10}$, $F_{01}$, and $G_{01}$ are irreducible polynomials in $\F_q[X_0,X_1;Y_0,Y_1]$ with 
bidegrees $(1,0)$, $(0,1)$, and $(0,1)$, respectively, and that $F_{01}$ 
is not an $\F_q^\times$-multiple of $G_{01}$. 

\begin{lemma} \label{lemma:number for each splitting type}
The number of reducible $(2,2)$-forms over $\F_q$ with each factorization
type are as follows. Moreover, the curve $C \subset \PP^1 \times \PP^1$ 
defined by such a form either always has a smooth $\F_q$-point,
or never has a smooth $\F_q$-point, as indicated in the right hand column.

\[ \hspace{-0.5em} \begin{array}{cr@{\,\,}c@{\,\,}lc}
\text{\rm{Factorization type}} & 
\multicolumn{3}{c}{\hspace{-1em}\text{\rm{Number of forms up to scaling by $\F_q^\times$}}} 
& \text{{\rm Smooth point?}} \\
\hline
(1,1)(1,1)               & \binom{m_{11}}{2} &=&  (q^3 - q) (q^3 - q - 1)/2 & \rm{yes} \\    
(2,1)(0,1) \text{ {\rm or} } (1,2)(1,0)  & 2 m_{21} m_{10} &=&  2 q^3 (q + 1)^2 (q - 1) & \rm{yes} \\    
(1,1)(1,0)(0,1)          & m_{11} m_{10}^2 &=& q (q + 1)^3 (q - 1) &               \rm{yes} \\    
(1,0)(1,0)(0,1)(0,1)    & \binom{m_{10}}{2}^2 &=&  q^2 (q + 1)^2/4 &                   \rm{yes} \\    
(2,0)(0,1)(0,1) \text{ {\rm or} } (0,2)(1,0)(1,0)  & 
                          2 m_{20} \binom{m_{10}}{2} &=& q^2 (q + 1) (q - 1)/2 &  \rm{yes} \\    
(2,0)(0,2)               & m_{20}^2 &=& q^2 (q - 1)^2/4 &                   \rm{no} \\    
(1,0)^2(0,1)(0,1) \text{ {\rm or} } (0,1)^2(1,0)(1,0)
                      & 2 m_{10} \binom{m_{10}}{2} &=& q (q + 1)^2 &                     \rm{yes} \\    
(2,0)(0,1)^2 \text{ {\rm or} } (0,2)(1,0)^2 & 2m_{20} m_{10} &=& q (q + 1) (q - 1) &    \rm{no} \\  
(1,1)^2                  & m_{11} &=& q (q + 1) (q - 1) &                  \rm{no} \\
(1,0)^2(0,1)^2           & m_{10}^2 &=& (q + 1)^2 &                         \rm{no}  
\end{array}  \hspace{-0.5em} \]
\end{lemma}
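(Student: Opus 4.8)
The plan is to treat the two columns of the table separately: first the enumeration of forms of each factorization type, and then the smooth-point dichotomy. Both rest on the fact that a bihomogeneous polynomial over $\F_q$ factors uniquely, up to reordering and scaling by $\F_q^\times$, into irreducible bihomogeneous factors. Consequently a $(2,2)$-form of a prescribed factorization type, taken up to scaling, is determined by and determines the unordered multiset of its irreducible factors, each regarded up to scaling. So the count for each type is an elementary product or binomial-coefficient expression in the irreducible counts $m_{10} = q+1$, $m_{20} = (q^2-q)/2$, $m_{11} = q^3-q$, $m_{21} = q^5-q^3$ supplied by Lemma~\ref{count:irred}, together with their $X \leftrightarrow Y$ mirror images $m_{01}=m_{10}$, $m_{02}=m_{20}$, $m_{12}=m_{21}$.

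Concretely, I would record the bookkeeping rule: a multiset with two distinct factors of the same bidegree contributes a binomial coefficient $\binom{m}{2}$; a repeated (squared) factor contributes $m$; and factors of differing bidegree contribute the product of their counts. For instance $(1,1)(1,1)$ gives $\binom{m_{11}}{2}$, the type $(2,0)(0,1)(0,1)$ gives $m_{20}\binom{m_{10}}{2}$, and $(1,1)(1,0)(0,1)$ gives $m_{11}m_{10}^2$. Whenever a type is paired with its mirror under $X \leftrightarrow Y$ (e.g.\ $(2,1)(0,1)$ with $(1,2)(1,0)$), the two contributions are equal and disjoint, producing the factor of $2$; here I would note that the mirror cases are genuinely distinct since, e.g., a $(2,1)$-factor is never a $(1,2)$-factor. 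Substituting the values of the $m$'s and simplifying then yields the closed forms in the middle column; these are routine polynomial identities.

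For the smooth-point column I would use three facts about $\PP^1 \times \PP^1$ over $\F_q$, each depending only on the factorization type and not on the particular factors (which is what makes the yes/no answer uniform across a type): (i) a curve of bidegree $(a,b)$ and one of bidegree $(c,d)$ meet in a scheme of length $ad+bc$; (ii) each $(1,0)$-, $(0,1)$-, or $(1,1)$-factor cuts out a smooth rational curve $\isom \PP^1$ with exactly $q+1 \ge 3$ points over $\F_q$, while an irreducible $(2,0)$- or $(0,2)$-factor cuts out a pair of conjugate fibers with no $\F_q$-point; and (iii) a point of $C = V(F)$ is singular whenever it lies on a factor of multiplicity $\ge 2$ or on two distinct reduced components, and is smooth otherwise. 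For each ``yes'' type I would single out a multiplicity-one smooth rational component $L$ defined over $\F_q$ (always the $(0,1)$- or $(1,0)$-line, or a $(1,1)$-curve, never the possibly-singular $(2,1)$-curve); by (i) it meets the residual divisor in a scheme of length $2$, so at most $2$ of its $q+1 \ge 3$ rational points lie on the rest of $C$, leaving a smooth $\F_q$-point. For each ``no'' type I would argue that $C$ has no smooth $\F_q$-point because either it has no $\F_q$-point at all (type $(2,0)(0,2)$, where both components are conjugate-fiber pairs), or all of its $\F_q$-points lie on a component of multiplicity $\ge 2$ and are therefore singular by (iii) (types $(2,0)(0,1)^2$, $(1,1)^2$, and $(1,0)^2(0,1)^2$).

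The counting is essentially bookkeeping once unique factorization and the mirror symmetry are in place, so the main obstacle is the smooth-point analysis, and within it the non-reduced types. I expect the delicate point to be verifying that a point on a multiplicity-$\ge 2$ component is always singular (so that the repeated-factor ``no'' cases genuinely lack smooth $\F_q$-points) while simultaneously confirming, in the mixed type $(1,0)^2(0,1)(0,1)$, that the reduced $(0,1)$-components still carry smooth $\F_q$-points away from the double line: the vanishing of $\nabla(G F^2) = (\nabla G)F^2 + 2GF\,\nabla F$ along $\{F=0\}$ handles the former, and the length-$2$ intersection bound (leaving $q-1 \ge 1$ smooth rational points on $L$) handles the latter.
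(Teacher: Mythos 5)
Your proposal is correct and follows essentially the same route as the paper: counts read off from Lemma~\ref{count:irred} via unique factorization, ``yes'' cases handled by a multiplicity-one rational component $\isom \PP^1$ meeting the rest of $C$ in at most $2 < q+1$ points, and ``no'' cases by noting every component is either repeated (hence everywhere singular) or a $(2,0)$/$(0,2)$ curve with no $\F_q$-points. The only (welcome) variation is that you obtain the bound of $2$ uniformly from the intersection number $ad+bc$ and always choose a line or $(1,1)$-curve as the marked component, whereas the paper does a case-by-case check and also permits the $(2,1)$- or $(1,2)$-factor.
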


\begin{proof}
The counts all follow from Lemma~\ref{count:irred} as indicated.
In each of the cases listed as having smooth $\F_q$-points, there is 
an irreducible factor of multiplicity 1 with bidegree $(1,0)$, $(0,1)$, $(1,1)$, $(2,1)$ or $(1,2)$.
Each such factor defines a smooth curve of genus $0$ which, by projection to 
one of the factors, is isomorphic to $\PP^1$. Moreover a case-by-case 
analysis shows 
that this curve meets the other components of $C$ in at most 2 points. 
Since $\# \PP^1(\F_q) = q + 1 > 2$ this shows
that $C$ has a smooth $\F_q$-point. In the remaining cases each irreducible factor
is either repeated or has bidegree $(2,0)$ or $(0,2)$. So in these cases
there are no smooth $\F_q$-points.
\end{proof}

\subsection{Irreducible $(2,2)$-forms}

We now consider the irreducible $(2,2)$-forms over $\F_q$. 
We distinguish between those that are absolutely irreducible (i.e., do not factor
over $\overline{\F}_q$) and those that factor over $\F_{q^2}$ as the product of a bidegree $(1,1)$-form 
and its conjugate.
In the latter case, we say the form has
factorization type $(1,1)\overline{(1,1)}$.

\begin{lemma}
\label{lem:irred}
Let $C \subset \PP^1 \times \PP^1$ be a curve defined by an absolutely 
irreducible $(2,2)$-form $F \in \F_q[X_0,X_1;Y_0,Y_1]$. Then $C$ has a smooth $\F_q$-point.
\end{lemma}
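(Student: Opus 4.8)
The plan is to exploit the fact that a $(2,2)$-form cuts out a curve of arithmetic genus $1$ on $\PP^1 \times \PP^1$, and to split into the smooth and the singular cases. By adjunction, since the canonical class of $\PP^1 \times \PP^1$ is $\calO(-2,-2)$, any curve $C$ of bidegree $(2,2)$ has $\omega_C \isom \calO_C$ and hence arithmetic genus $p_a(C) = 1$. Because $F$ is absolutely irreducible, $C$ is geometrically integral, so its geometric genus $g$ is well defined and satisfies $0 \le g \le p_a(C) = 1$.

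First I would dispose of the smooth case. If $C$ is smooth, then it is a smooth, projective, geometrically integral curve of genus $1$ over $\F_q$, and the Hasse--Weil bound gives $\# C(\F_q) \ge q + 1 - 2\sqrt{q} = (\sqrt{q}-1)^2 > 0$ for every $q \ge 2$. Since every point of a smooth curve is a smooth point, this already produces a smooth $\F_q$-point.

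Next, the singular case. If $C$ is singular, then $g < p_a(C) = 1$, so $g = 0$, and over $\Fbar_q$ the relation $p_a(C) = g + \sum_P \delta_P$ forces the total delta invariant to equal $1$; thus $C$ has a unique geometric singular point $P_0$ with $\delta_{P_0} = 1$. Being a single geometric point, $P_0$ is fixed by Frobenius and hence $\F_q$-rational, and the inequality $\delta_{P_0} \ge r_{P_0} - 1$ (with $r_{P_0}$ the number of branches) shows that at most two points of the normalization $\widetilde{C}$ lie over $P_0$. Now $\widetilde{C}$ is a smooth, geometrically integral curve of genus $0$ over $\F_q$; such a curve is a plane conic and therefore has an $\F_q$-point (a ternary quadratic form over a finite field has a nontrivial zero by Chevalley--Warning), so $\widetilde{C} \isom \PP^1_{\F_q}$ and $\# \widetilde{C}(\F_q) = q+1$. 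Since the normalization map is an isomorphism away from $P_0$, the smooth $\F_q$-points of $C$ correspond to the $\F_q$-points of $\widetilde{C}$ not lying over $P_0$, of which there are at least $(q+1) - 2 = q - 1 \ge 1$ for $q \ge 2$.

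I expect the only delicate part to be the bookkeeping in the singular case: one must confirm that the singular locus is genuinely a single $\F_q$-rational point carrying at most two branches, which is precisely what $\delta_{P_0} = 1$ buys, and that forming the normalization over $\F_q$ (rather than over $\Fbar_q$) is legitimate, which uses geometric integrality so that geometric genus and normalization are compatible with base change to $\Fbar_q$. The smooth case is immediate from Hasse--Weil, the only point worth checking being that the (barely positive) lower bound still forces a point for the small values $q = 2, 3$.
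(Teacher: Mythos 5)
Your proof is correct and follows essentially the same route as the paper: Hasse--Weil in the smooth genus-$1$ case, and in the singular case the normalization is $\PP^1_{\F_q}$ with at most two of its $q+1$ rational points lying over the singular locus, leaving $q-1 \ge 1$ smooth $\F_q$-points. Your $\delta$-invariant bookkeeping (unique, $\F_q$-rational singular point with at most two branches) simply makes explicit what the paper asserts when it says the preimage of the singular point has length at most $2$.
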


\begin{proof}
If $C$ is smooth then it has genus 1, and the lemma
follows by the Hasse-Weil bound. If $C$ is singular, then it has geometric genus $0$. The normalization is a smooth genus $0$ curve, in fact $\PP^1$ itself (e.g., by the Hasse-Weil bound), and thus has $(q+1)$ $\F_q$-points. Since the preimage of the singular point is at most length $2$, the curve $C$ must have at least one smooth $\F_q$-point.
\end{proof}

\begin{remark}
\label{rem:morecounts}
The numbers of irreducible $(2,2)$-forms are as follows:
 \[ \begin{array}{clll}
\text{\rm{Factorization type}} & 
\multicolumn{3}{c}{\text{\rm{Number of forms up to scaling by $\F_q^\times$}}} \\
\hline
\text{smooth}             & \hspace{2.5em} &q^4 (q + 1)^2 (q - 1)^2 \\
\text{absolutely irreducible yet singular}        & &q^3 (q + 1)^2 (q - 1)^2 \\
(1,1)\overline{(1,1)}    & &(q^3 - q)(q^3 + q - 1)/2 
\end{array} \]
In the case $(1,1)\overline{(1,1)}$ the $(1,1)$-forms are 
irreducible over $\F_{q^2}$, since the original $(2,2)$-form was 
irreducible over $\F_q$.
Each therefore defines the graph of a Mobius map, and so the count in 
the last row is $(\#\PGL_2(\F_{q^2}) -\#\PGL_2(\F_{q}))/2$.
We omit the details of the other two counts since these are not
needed for the proof of Theorem~\ref{thm:loc}. 
However, as a check on our calculations, we note that these counts, together 
with those in Lemma~\ref{lemma:number for each splitting type}, 
do indeed add up to $(q^9-1)/(q-1)$. 
\end{remark}

If $F$ has factorization type $(1,1)\overline{(1,1)}$ then $C$ is
geometrically the union of two rational curves. We subdivide into
cases according as these meet in
\begin{enumerate}
\item a pair of points defined over $\F_q$, 
\item a pair of conjugate points defined over $\F_{q^2}$, 
\item a single point defined over $\F_q$.
\end{enumerate}

\begin{lemma} 
\label{lem:count1}
The number of $(2,2)$-forms over $\F_q$ (up to scaling by $\F_q^\times$) in cases (i), (ii) and (iii) above are, respectively,
\begin{align*}
 n_{11} &= q^3 (q + 1)^2 (q - 1)/4, \\
 n_{12} &= q^2 (q + 1) (q - 1)^2 (q - 2)/4, \\
 n_{13} &= q (q + 1)^2 (q - 1)^2/2.
\end{align*}
\end{lemma}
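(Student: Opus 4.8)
The plan is to parametrize the forms of type $(1,1)\overline{(1,1)}$ by unordered Galois-conjugate pairs of graphs of Möbius transformations, and to recover the three cases (i)--(iii) from the fixed-point geometry of a single associated element of $\PGL_2(\F_{q^2})$. As recalled in Remark~\ref{rem:morecounts}, an irreducible $(1,1)$-form over $\F_{q^2}$ is the graph $\Gamma_\phi$ of a Möbius map $\phi\in\PGL_2(\F_{q^2})$, and a form of type $(1,1)\overline{(1,1)}$ is $L\overline{L}$ where $L$ corresponds to some $\phi$ with $\phi\neq\overline{\phi}$; here $\overline{\phi}$ is the image of $\phi$ under the $q$-power Frobenius. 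Thus such forms (up to $\F_q^\times$-scaling) are in bijection with unordered pairs $\{\Gamma_\phi,\Gamma_{\overline{\phi}}\}$, and writing $n_{1j}$ for the number in case $j$ I have $n_{1j}=\tfrac12\,\#\{\phi\in\PGL_2(\F_{q^2}):\phi\neq\overline{\phi}\text{ and }\{\Gamma_\phi,\Gamma_{\overline{\phi}}\}\text{ is in case }j\}$, the factor $\tfrac12$ accounting for the two elements $\phi,\overline{\phi}$ of each pair (which give the same form and the same case).

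Second, I would track the intersection $\Gamma_\phi\cap\Gamma_{\overline{\phi}}$ through the element $\tau:=\overline{\phi}^{-1}\phi\in\PGL_2(\F_{q^2})$. The two graphs meet where $\phi(x)=\overline{\phi}(x)$, that is, at points whose first coordinate is a fixed point of $\tau$; since the class of a $(1,1)$-curve has self-intersection $2$ in $\PP^1\times\PP^1$, this locus has length $2$. A direct computation gives $\overline{\tau}=\tau^{-1}$, so the fixed locus of $\tau$ is Frobenius-stable, and the three cases correspond precisely to $\tau$ being semisimple with both fixed points in $\PP^1(\F_q)$ (case (i)), semisimple with a conjugate pair of fixed points in $\PP^1(\F_{q^2})\setminus\PP^1(\F_q)$ (case (ii)), or parabolic with a single fixed point in $\PP^1(\F_q)$ (case (iii)). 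A short Frobenius computation at the fixed points, using that $\phi(x)=\overline{\phi}(x)$ there, confirms that the intersection points are then respectively a pair of $\F_q$-points, a conjugate $\F_{q^2}$-pair, and a single $\F_q$-point, matching (i)--(iii).

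Third, I would count fibers and then the elements $\tau$. If $\overline{\phi}^{-1}\phi=\overline{\phi'}^{-1}\phi'$ then $\phi'\phi^{-1}\in\PGL_2(\F_q)$, so every nonempty fiber of $\phi\mapsto\tau$ is a coset of $\PGL_2(\F_q)$ and has size $q^3-q$; hence the image $I$ has $(q^6-q^2)/(q^3-q)=q^3+q$ elements and $n_{1j}=\tfrac12(q^3-q)\,\#\{\tau\in I:\text{case }j\}$. It then remains to count $\tau$ with $\overline{\tau}=\tau^{-1}$ of each type. Normalizing the fixed points: in case (i) one has $\binom{q+1}{2}$ unordered $\F_q$-pairs, and for each the maps $x\mapsto tx$ with $t\in\mu_{q+1}\setminus\{1\}$ ($q$ of them); in case (ii) one has $(q^2-q)/2$ conjugate pairs, each with multiplier $s\in\F_q^\times\setminus\{1\}$ ($q-2$ of them); in case (iii) one has $q+1$ fixed points, each with a nonzero translation $b$ of trace zero ($q-1$ of them). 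This yields $q^2(q+1)/2$, $q(q-1)(q-2)/2$ and $q^2-1$ respectively; multiplying by $\tfrac12(q^3-q)$ gives exactly the stated values of $n_{11},n_{12},n_{13}$. As a consistency check, these three counts sum to $q^3+q-1$, which exhausts $\{\tau:\overline{\tau}=\tau^{-1}\}\setminus\{\mathrm{id}\}$, confirming both that $I=\{\tau:\overline{\tau}=\tau^{-1}\}$ and that the total agrees with the $(1,1)\overline{(1,1)}$ count of Remark~\ref{rem:morecounts}.

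The main obstacle is Step~two's translation of the single identity $\overline{\tau}=\tau^{-1}$ into the correct multiplier condition in each geometric case: the condition becomes $t^{q+1}=1$ when the fixed points are $\F_q$-rational but $s^{q-1}=1$ when they form a conjugate pair, the difference arising from whether Frobenius fixes or swaps the two fixed points (the swap inverting the multiplier). Getting these field conditions and the associated per-configuration counts right is the crux; the fiber computation and the final arithmetic are then routine, and the sum check provides a strong guard against error.
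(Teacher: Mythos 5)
Your proof is correct, and it takes a genuinely different route from the paper's. The paper proves Lemma~\ref{lem:count1} case by case: in each of (i)--(iii) it moves the intersection points into standard position by an $\F_q$-change of coordinates, observes that $\overline{F}$ is then $f(X_0Y_1,X_1Y_0)$, $f(X_0Y_0+rX_0Y_1+sX_1Y_1,\,X_0Y_1-X_1Y_0)$, or $f(X_0Y_0,\,X_0Y_1+cX_1Y_0)$ with $f$ an irreducible binary quadratic form, and then multiplies the number of point configurations by the number of admissible $f$ (discarding, in case (ii), the single choice of $f$ that produces a $(2,0)(0,2)$ factorization). You instead run a uniform group-theoretic count: forms correspond to unordered Frobenius-conjugate pairs $\{\Gamma_\phi,\Gamma_{\overline{\phi}}\}$, the case is read off from the fixed-point type of $\tau=\overline{\phi}^{-1}\phi$, fibers of $\phi\mapsto\tau$ are $\PGL_2(\F_q)$-cosets, and the cocycles $\overline{\tau}=\tau^{-1}$ are counted by type. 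I checked the key points: the fiber claim (which rests on the standard Hilbert~90 fact $\PGL_2(\F_{q^2})^{\mathrm{Frob}}=\PGL_2(\F_q)$, worth a line in a written-up version), the Frobenius computation showing the intersection points $(x,\phi(x))$ are rational exactly when the fixed points of $\tau$ are, the multiplier conditions $t^{q+1}=1$ (Frobenius fixes both fixed points) versus $s^{q-1}=1$ (Frobenius swaps them, inverting the multiplier), the count $q-1$ of nonzero trace-zero translations in the parabolic case, and the final arithmetic; all are right. Moreover, your closing squeeze argument --- the three per-type cocycle counts sum to $q^3+q-1=|I|-1$, forcing the image $I$ to equal the full cocycle set --- legitimately closes the one containment you would otherwise need Lang's theorem (vanishing of $H^1$ for $\PGL_2$ over finite fields) to establish. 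What your approach buys is uniformity across the three cases and that built-in consistency check; what the paper's buys is the explicit normal forms for $\overline{F}$, which are not a by-product of your argument but are reused verbatim later: Cases 1(i) and 1(iii) of the solubility analysis in Section~\ref{ss: nosmoothpts} begin from exactly those normal forms.
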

\begin{proof}  \ 
\begin{enumerate}[leftmargin={2\parindent},itemsep=0.5\baselineskip]
\item \label{lem:count1-1}
The singular points can be any pair of points in $\PP^1(\F_q) \times 
\PP^1(\F_q)$ that remain distinct under both projection maps. 
This last condition comes from the fact, noted in Remark~\ref{rem:morecounts},
that each $(1,1)$-form defines the graph of a Mobius map.
We make a change of coordinates to move the singular points to
$((0:1),(0:1))$ and $((1:0),(1:0))$. By hypothesis $F$
factors as the product of two $(1,1)$-forms over $\F_{q^2}$, and these are now
linear combinations of $X_0Y_1$ and $X_1Y_0$. Therefore
$F = f(X_0Y_1,X_1Y_0)$ for some irreducible binary
quadratic form $f$. We compute $n_{11}$ as the product of the
$(q^2 - q)/2$ choices for $f$ (up to scaling), and the $q^2(q+1)^2/2$ 
choices for the (unordered) pair of singular points.
\item 
\label{lem:count1-2}
We write $\F_{q^2} = \F_{q}(\alpha)$ where $\alpha^2 + r \alpha + s = 0$ 
for some $r,s \in \F_q$. As noted in (i), the singular points remain
distinct under both projection maps. After a change of coordinates, 
defined over $\F_q$, we may therefore assume that the singular points 
are $((\alpha:1),(\alpha:1))$ and its Galois conjugate.
Then $F = f(X_0 Y_0 + r X_0 Y_1 + s X_1 Y_1,X_0 Y_1 - X_1 Y_0)$
for some irreducible binary quadratic form $f$. There are $(q^2 - q)/2$ choices 
for $f$ (up to scaling), but one of these gives $F = (X_0^2 + r X_0 X_1 + s X_1^2)
(Y_0^2 + r Y_0 Y_1 + s Y_1^2)$. Therefore $n_{12}$ is the product of $(q+1)(q-2)/2$
and the $(q^2 - q)^2/2$ choices for the singular points.
\item \label{lem:count1-3}
We make a change of coordinates to move the singular point to
$((0:1),(0:1))$. Then $F$ is the product of 
$\alpha X_0 Y_0 + \beta X_0 Y_1 + \gamma X_1 Y_0$,
and its Galois conjugate, for some $\alpha,\beta,\gamma \in \F_{q^2}$
with $\beta,\gamma \not= 0$. 
If $\beta$ and $\gamma$ are a basis for
$\F_{q^2}$ as an $\F_q$-vector space, then by substitutions of the
form $X_1 \leftarrow X_1 + \lambda X_0$ and 
$Y_1 \leftarrow Y_1 + \mu Y_0$ with $\lambda,\mu \in \F_q$ we may
reduce to the case $\alpha =0$. But then the two components also
meet at $((1:0),(1:0))$, which cannot happen in case (iii).
Therefore
$F = f(X_0Y_0,X_0Y_1 + c X_1Y_0)$ for some irreducible binary
quadratic form $f$ and constant $c \in \F_q^\times$. 
We compute $n_{13}$ as the product of the $(q^2 - q)/2$ choices for 
$f$ (up to scaling), the $q-1$ choices for $c$, and the $(q+1)^2$ choices
for the singular point.
 \end{enumerate}

As a final check, 
we note that $n_{11} + n_{12} + n_{13} = (q^3 - q)(q^3 + q - 1)/2$.
\end{proof}

\section{Local solubility for bidegree $(2,2)$-forms} \label{sec:mainpf}

Fix a prime $p$, and consider the space of all $(2,2)$-forms $F \in \Z_p[X_0,X_1;Y_0,Y_1]$ with its natural product Haar measure when viewed as a copy of $\Z_p^9$. In this section, we determine the density of $\Q_p$-soluble forms in this space.

In order to determine the solubility of a given form $F$, it will
often suffice to look at its reduction mod $p$, denoted
$\overline{F} \in \F_p[X_0,X_1;Y_0,Y_1]$, and look for a smooth
$\F_p$-point on the curve $\overline{C}$ defined by $\overline{F}$, so
that we may apply Hensel's lemma. As seen in the tables of
Section~\ref{sec:count} (where we now take $q=p$), it is easy to see
that most of the factorization types for $\overline{F}$ have a smooth
$\F_p$-point on $\overline{C}$.  According to the results of
Section~\ref{sec:count}, only five cases require further
consideration, which we analyze in Section~\ref{ss: nosmoothpts}.

\subsection{Preliminaries}

Let $v(a) = v_p(a)$ denote the $p$-adic valuation of $a \in \Q_p$.

We will often keep track of the valuations of the coefficients of the $(2,2)$-form \eqref{eq:bideg22} as a $3 \times 3$ table:
\begin{equation} \label{eq:valtable}
\begin{matrix}
v(a_{00}) & v(a_{01}) & v(a_{02})\\
v(a_{10}) & v(a_{11}) & v(a_{12})\\
v(a_{20}) & v(a_{21}) & v(a_{22}){\rlap ,}
\end{matrix}
\end{equation}
where each entry is the valuation of the coefficient of the corresponding monomial term in
\begin{equation} \label{eq:coefftable}
\begin{matrix}
X_0^2Y_0^2 & X_0^2 Y_0Y_1 & X_0^2Y_1^2\\
X_0X_1Y_0^2 & X_0X_1Y_0Y_1 & X_0X_1Y_1^2\\
X_1^2Y_0^2 & X_1^2Y_0Y_1 & X_1^2Y_1^2{\rlap .}
\end{matrix}
\end{equation}

\subsection{A useful lemma} Before analyzing the cases where Hensel's lemma does not directly apply, we note that the following lemma will be used several times in the next section, 
and is, in some sense, typical of the arguments we use. In fact, we use it once in the analysis of
Case 3 and twice in the analysis of Case 5 (in Lemmas~\ref{lem:nu51} and~\ref{lem:nu5prime}).

\begin{lemma}
\label{L: solubility1/2}
Let $a_{00}, a_{01}, a_{02}, a_{10}, a_{11}, a_{20}$ be any fixed elements of $\Z_p$ satisfying
\begin{align*}
&v(a_{00}) \geq 2, \quad v(a_{01}) \geq 2, \quad v(a_{02})=1,\\
&v(a_{10}) \geq 1, \quad v(a_{11}) \geq 1,\\
&v(a_{20}) = 0.
\end{align*}
Let 
\[
\calS := \left\{\sum_{i,j = 0}^2a_{ij}X_0^{2-i}X_1^iY_0^{2-j}Y_1^j : 
  a_{12} \in p\Z_p, a_{21} \in \Z_p, a_{22} \in \Z_p\right\}. \]
Then the proportion of the polynomials in $\calS$ that have $\Q_p$-solutions for which $p \nmid X_1Y_1$  is $1/2$.
\end{lemma}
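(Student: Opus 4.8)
The plan is to parametrize the polynomials in $\calS$ by their free coefficients $a_{12} \in p\Z_p$, $a_{21} \in \Z_p$, $a_{22} \in \Z_p$ (each with its normalized Haar measure), and to compute the measure of the set of triples $(a_{12}, a_{21}, a_{22})$ for which the resulting $(2,2)$-form has a $\Q_p$-point with $p \nmid X_1 Y_1$. Since we only seek solutions with $X_1, Y_1$ both units, I can dehomogenize by setting $X_1 = Y_1 = 1$ and looking for a solution $(x, y) \in \Z_p \times \Z_p$ of the affine equation $F(x, 1, y, 1) = 0$; any such solution automatically satisfies $p \nmid X_1 Y_1$, and conversely a projective solution with $p \nmid X_1 Y_1$ scales to this form. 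First I would write out $F(x,1,y,1) = \sum_{i,j} a_{ij} x^{2-i} y^{2-j}$ and examine its reduction mod $p$ using the given valuation hypotheses.

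The key observation driving the proof should be that modulo $p$, the only surviving coefficients are those of valuation $0$, namely $a_{20}$ (a unit) together with whichever of $a_{21}, a_{22}$ happen to be units; note $a_{12} \in p\Z_p$ by hypothesis, and $a_{00}, a_{01}, a_{02}, a_{10}, a_{11}$ all reduce to $0$. So $\overline{F}(x,1,y,1) \equiv x^{2}(\overline{a_{20}} + \overline{a_{21}} y + \overline{a_{22}} y^{2}) \pmod{p}$ after collecting the $X_1^2$-terms. The idea is that a unit solution $(x,y)$ requires $\overline{x} \neq 0$, hence forces $\overline{a_{20}} + \overline{a_{21}}\,\overline{y} + \overline{a_{22}}\,\overline{y}^{2} \equiv 0$, reducing the question to the solubility of a binary quadratic in $y$ and then a Hensel lifting step in the $x$-variable. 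I would next fix $y \in \Z_p^\times$ (or more carefully, any $y$ with the right reduction) and view $F(x,1,y,1) = 0$ as a quadratic in $x$ whose leading coefficient $g(y) := a_{20} + a_{21} y + a_{22} y^2$ is a unit precisely when $\overline{g}(\overline{y}) \neq 0$; solubility in $x$ with $p \nmid x$ then follows from Hensel provided the relevant discriminant or the lower-order structure cooperates, and I expect a clean $1/2$ to emerge from a square-versus-nonsquare dichotomy in $\Z_p^\times / (\Z_p^\times)^2$.

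Concretely, I would treat $F(x,1,y,1)=0$ as a quadratic equation in $x$ over $\Z_p$ with coefficients depending on $y$, $a_{21}$, $a_{22}$, and the fixed data. Completing the square (or invoking the quadratic formula over $\Q_p$), solubility with $p \nmid x$ is governed by whether a certain discriminant $\Delta(y, a_{21}, a_{22})$ is a square in $\Q_p$ and whether the resulting root is a unit. Under the stated valuations, the dominant term of $\Delta$ modulo the relevant power of $p$ should be a unit times a square coming from the $a_{20}$-term, so that the condition collapses to a single square-class constraint on one free parameter; averaging the indicator of ``is a square'' over that parameter against Haar measure yields exactly $1/2$, since $[\Z_p^\times : (\Z_p^\times)^2] = 2$ and units split evenly into squares and nonsquares (the prime $p$ odd case being the generic one; the behaviour at $p = 2$ needs separate bookkeeping with valuations, but the measure of squares among the relevant units is still $1/2$ after accounting for the extra congruence conditions).

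The main obstacle I anticipate is the careful valuation bookkeeping needed to show that the ``obstruction'' to a unit solution is genuinely a single square-class condition on one coordinate, uniformly over the fibers, rather than a more intricate condition that could shift the proportion away from $1/2$. In particular, I must verify that for a positive-measure set of $(a_{21}, a_{22})$ the leading quadratic $g(y)$ has a unit root $\overline{y}$ mod $p$ (so that the Hensel lifting of $y$ is unobstructed), and then track how the remaining coefficients $a_{00}, \ldots, a_{11}$ (which are small but nonzero) perturb the $x$-discriminant; the hypotheses $v(a_{02}) = 1$ and $v(a_{20}) = 0$ are presumably calibrated precisely so that after completing the square the discriminant is a unit multiple of a perfect square plus a controlled correction, making the square-class of exactly one parameter the sole determining factor. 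I expect the cleanest route is to fix all but one of the free parameters, solve explicitly for the last, and show the solubility locus in that last coordinate is a union of cosets of $(\Z_p^\times)^2$ of total measure $1/2$; Fubini then gives the stated proportion.
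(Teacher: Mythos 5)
Your opening reduction (affine solutions $(x,y)\in\Z_p^2$ of $F(x,1,y,1)=0$ correspond exactly to projective solutions with $p\nmid X_1Y_1$) is correct, but the computations built on it are not. First, the reduction mod $p$ is $F(x,1,y,1)\equiv \overline{a_{20}}\,y^2+\overline{a_{21}}\,y+\overline{a_{22}} \pmod p$: there is no factor of $x^2$ (after setting $X_1=1$ the $X_1^2$-terms carry no power of $x$), so in particular nothing forces $\overline{x}\neq 0$. Indeed your requirement ``a unit solution $(x,y)$'' conflates $p\nmid X_1Y_1$ with $p\nmid X_0Y_0$: the solutions being counted may perfectly well have $p\mid x$, and the Hensel points produced in the split case (the points $((0:1),(\widetilde{\alpha}:1))$ of the paper's argument) have $x=0$. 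Second, viewed as a quadratic in $x$, the leading coefficient of $F(x,1,y,1)$ is $a_{00}y^2+a_{01}y+a_{02}$, which by hypothesis has valuation $\geq 1$ and is never a unit; your $g(y)$ is the \emph{constant} term. The useful structure is the transposed one: as a quadratic in $y$ for fixed $x$, the leading coefficient $A(x)=a_{00}x^2+a_{10}x+a_{20}$ is always a unit, so for odd $p$ solubility is equivalent to the existence of $x\in\Z_p$ with $\Delta(x)=B(x)^2-4A(x)C(x)$ a square in $\Q_p$, where $B(x)=a_{01}x^2+a_{11}x+a_{21}$ and $C(x)=a_{02}x^2+a_{12}x+a_{22}$.

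Even after these corrections, the core of your argument---that the answer is $1/2$ because a single square-class indicator averages to $1/2$ ``since units split evenly into squares and nonsquares''---is a genuine gap, and as stated it would produce the wrong constant. For any \emph{fixed} $x$, the quantity $\Delta(x)$ is uniformly distributed on $\Z_p$ as the free coefficients vary, and the measure of squares in $\Z_p$ is $\frac{p}{2(p+1)}$, not $\frac12$; so no one-parameter, one-shot square-class computation can give $\frac12$. What is true is that mod $p$ one has $\Delta(x)\equiv \overline{a_{21}}^2-4\overline{a_{20}}\,\overline{a_{22}}$ independently of $x$: when this is a nonzero square (probability $\frac12(1-\frac1p)$) the form is soluble, when it is a nonsquare (same probability) it is insoluble, and the entire difficulty is the degenerate stratum of probability $\frac1p$ where the reduction has a double root. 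On that stratum solubility depends on higher $p$-adic digits and on a genuine search over $x$, and one must prove its conditional probability is again exactly $\frac12$; this is where the hypotheses $v(a_{02})=1$ and $a_{12}\in p\Z_p$ earn their keep. The paper does this by a self-similarity argument: translate the double root to $0$ via $Y_0\leftarrow Y_0+cY_1$ (landing in the subfamily $\calT$ with $a_{12},a_{21},a_{22}\in p\Z_p$), then substitute $Y_0\leftarrow pY_0$, divide by $p$, and \emph{swap the roles of $X$ and $Y$}, which reproduces a family satisfying the original hypotheses; this yields the functional equations \eqref{sigmatau} and \eqref{tausigma}, whose unique solution is $\sigma=\tau=\frac12$. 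Your proposal contains no substitute for this recursion/fixed-point step (the hoped-for ``unit multiple of a perfect square plus a controlled correction'' simply does not exist uniformly on the degenerate stratum, where the soluble set is a countable union of scaled square classes with shrinking radii), and it also defers $p=2$, which the paper's argument covers uniformly because it never invokes discriminants.
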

\begin{proof}
Let $\sigma(a_{00},a_{01},a_{02},a_{10},a_{11},a_{20})$ be the desired probability (that a polynomial in $\calS$ has a $\Q_p$-solution with $p \nmid X_1 Y_1$), and let
$\tau(a_{00},a_{01},a_{02},a_{10},a_{11},a_{20})$ be the corresponding probability when 
$\calS$ is replaced by its subset
\[
\calT := \left\{\sum_{i,j = 0}^2a_{ij}X_0^{2-i}X_1^iY_0^{2-j}Y_1^j : 
  a_{12}, a_{21}, a_{22} \in p\Z_p\right\}. \]

If $F \in \calS$, then $F(0,1;Y_0,1)$ reduces mod $p$ to a quadratic polynomial in $Y_0$, i.e., the coefficient of $Y_0^2$ is nonzero mod $p$. Furthermore, all such quadratic polynomials occur with equal probability.
By Lemma \ref{L: affinebinquad}, this quadratic splits into distinct factors over $\F_p$ with probability 
$\frac{1}{2}(1-\frac{1}{p})$. In this case, the point $((0:1),(\alpha:1))$, where $\alpha$ is one of the roots of the quadratic, is a
smooth $\F_p$-point, so by Hensel's lemma, the curve defined by $F=0$ has a $\Q_p$-point of the 
form $((0:1),(\widetilde{\alpha}:1))$ for some lift $\widetilde{\alpha} \in \Z_p$. If the quadratic is instead
irreducible, as happens with probability $\frac{1}{2}(1-\frac{1}{p})$, then 
there are no $\Q_p$-points with $p \nmid X_1$.
It remains to consider the case $F \equiv a_{20} X_1^2 (Y_0 - c Y_1)^2 \pmod{p}$, 
for some $0 \le c \le p-1$. Transforming $F$ by the substitution 
$Y_0 \leftarrow Y_0 + c Y_1$ we find
\begin{equation}
\label{sigmatau}
\sigma \begin{pmatrix} a_{00} & a_{01} & a_{02} \\ a_{10} & a_{11} \\ a_{20} \end{pmatrix}
= \frac{1}{2}\bigg(1-\frac{1}{p}\bigg) + \frac{1}{p^2} \sum_{c=0}^{p-1} \tau
 \begin{pmatrix} a_{00} & 2ca_{00} + a_{01} & c^2a_{00} + ca_{01} + a_{02}  
\\ a_{10} & 2ca_{10} + a_{11} \\ a_{20} \end{pmatrix}.
\end{equation}
We note in particular that the arguments of $\tau$ 
satisfy the conditions in the statement of the lemma.

If $F \in \calT$ then $F \equiv a_{20} X_1^2 Y_0^2 \pmod{p}$. For a solution with
$p \nmid X_1$ we need $p \mid Y_0$. This suggests making the substitution 
$Y_0 \leftarrow p Y_0$. Dividing through by $p$, and then swapping the $X$'s and $Y$'s
we find
\begin{equation}
\label{tausigma}
 \tau \begin{pmatrix} a_{00} & a_{01} & a_{02} \\ a_{10} & a_{11} \\ a_{20} \end{pmatrix}
=  \sigma \begin{pmatrix} pa_{00} & pa_{10} & pa_{20} \\ a_{01} & a_{11} \\ p^{-1} a_{02} \end{pmatrix}.
\end{equation}

Using~\eqref{sigmatau} and~\eqref{tausigma} to solve for $\sigma$ and $\tau$, we find that
$\sigma = \tau = 1/2$.
\end{proof}

\subsection{The cases without smooth $\F_p$-points}
\label{ss: nosmoothpts}

As remarked before, these are the factorization types of the $(2,2)$-form $F$ over $\F_p$ which do not immediately yield a smooth $\F_p$-point in the reduction of the corresponding curve.
\[ \begin{array}{cll}
\text{Case number} &
\text{Factorization type} &  
\text{Number of forms up to scaling by $\F_p^\times$} \\
\hline \\[-10pt]
1 & (1,1)\overline{(1,1)} & \quad n_1 := (p^3-p)(p^3 + p - 1)/2 \\
2 & (2,0)(0,2)            & \quad n_2 := p^2(p - 1)^2/4 \\
3 & (2,0)(0,1)^2 \text{ or } (0,2)(1,0)^2 &  \quad n_3 := p(p + 1)(p - 1) \\
4 & (1,1)^2        &  \quad n_4 := p(p + 1)(p - 1)  \\
5 & (1,0)^2(0,1)^2 &  \quad n_5 := (p + 1)^2
\end{array} \]

Let 
$n_0 = (p^9-1)/(p-1) - (n_1 + n_2 + n_3 + n_4 + n_5)$
be the number of forms lying in none of the $5$ cases, and 
let $\xi_i$ be the probability of solubility in case $i$.
Then the overall probability of solubility is 
\[ \rho = \frac{ n_0 + n_1 \xi_1 + n_2 \xi_2 + n_3 \xi_3 +  n_4 \xi_4
     + n_5 \xi_5 } { (p^9-1)/(p-1) } \]

In this section we compute $\xi_1, \ldots, \xi_5$ and hence
obtain the final answer stated in Theorem~\ref{thm:loc}.
In the context of computing $\xi_5$, it is helpful to make
the following definition for $\xi'_i$, and to compute the $\xi'_i$ alongside the $\xi_i$:
\begin{definition}
\label{def:nuprime}
For $1 \le i \le 4$, we let $\xi'_i$ be the probability of solubility
given the following conditions: we are in case $i$, the point $((0:1),(0:1))$
is a singular point on the reduction mod $p$,
and $v(a_{22}) \ge 2$. We write $\xi'_{1j}$ for $1 \leq j \leq 3$ for the same
probability in cases 1(i), 1(ii), and 1(iii), as defined below.
We define $\xi'_5$ in the same way, 
except that we require that the singular point $((0:1),(0:1))$
is {\em not} the point where the two lines meet.
\end{definition}

We compute the values of $\xi_i$ and $\xi'_i$ in the next sections.

\subsubsection{Case 1} In this case the reduction of our curve mod $p$ is
geometrically the union of two rational curves. We subdivide into
the cases (i), (ii), (iii) as defined immediately before Lemma~\ref{lem:count1}, and note that
$n_1 = n_{11} + n_{12} + n_{13}$.
Writing $\xi_{11}, \xi_{12}, \xi_{13}$ for the probabilities of
solubility in cases 1(i), 1(ii) and 1(iii), respectively, we have
\[ \xi_1 = (n_{11} \xi_{11} + n_{12} \xi_{12} + n_{13} \xi_{13})/n_1. \]

\subsubsection*{Case 1(i)}
In this case, the two components meet at a 
pair of points defined over $\F_p$. We begin by computing $\xi_{11}$. 
As in the proof of Lemma~\ref{lem:count1}(i) 
we may assume that
$\overline{F} = f(X_0Y_1,X_1Y_0)$ for some irreducible binary
quadratic form $f$. 
The only $\F_p$-points on the reduction are the singular points 
$((0:1),(0:1))$ and $((1:0),(1:0))$. We must decide if they
lift to $\Q_p$-points. Let $\alpha$ be the probability that the
singular point $((0:1),(0:1))$ lifts.

Since $((0:1),(0:1))$ is a singular point on the curve defined by 
$\overline{F}$, 
we deduce that all the valuations 
$v(a_{12}), v(a_{21}), v(a_{22})$ are $\geq 1$. If $v(a_{22}) = 1$, then the singular point $((0:1),(0:1))$ does not lift. Otherwise (with probability $1/p$), we have $v(a_{22}) \ge 2$. Then the valuations of the coefficients of $F$ satisfy
\begin{center}
\begin{tabular}{ccc}
$\geq 1$ & $\geq 1$ & $  =  0$\\
$\geq 1$ & $\geq 0$ & $\geq 1$\\
$  =  0$ & $\geq 1$ & $\geq 2$
\end{tabular}
\end{center}
where the equalities follow from $f$ being irreducible.
Making the substitutions $X_0 \leftarrow pX_0$,
$Y_0 \leftarrow pY_0$ and dividing through by $p^2$, we obtain a $(2,2)$-form $G(X_0,X_1,Y_0,Y_1)$
whose coefficients $b_{ij}$ have valuations satisfying
\begin{center}
\begin{tabular}{ccc}
$\geq 3$ & $\geq 2$ & $  =  0$\\
$\geq 2$ & $\geq 0$ & $\geq 0$\\
$  =  0$ & $\geq 0$ & $\geq 0 \rlap{.}$
\end{tabular}
\end{center}

We now investigate whether 
$\overline{G} \in \F_p[X_0,X_1;Y_0,Y_1]$ is absolutely
irreducible.
Define a ternary quadratic form $Q(X_0,Y_0,Z_0)$ by
$\overline{G}(X_0,1,Y_0,1) = Q(X_0,Y_0,1)$, so that the zero-set of $Q$
in $\Aff^2_{X_0,Y_0} \subset \PP^2$ coincides with the zero-set of
$\overline{G}$ in
$\Aff^1_{X_0} \times \Aff^1_{Y_0} \subset \PP^1 \times \PP^1$. Then
the curve defined by $Q$ (and thus, the curve defined by
$\overline{G}$) is geometrically irreducible if and only if the
discriminant of $Q$ is nonzero, equivalently
\[ b_{22} \disc(f) - f(b_{21},-b_{12}) \not\equiv 0 \pmod{p}. \]
(This argument still works in characteristic $2$ provided that the formula for the discriminant of a ternary quadratic form is scaled by appropriate powers of $2$.)

If the curve defined by $\overline{G}$ is geometrically irreducible, then the argument in Lemma \ref{lem:irred} shows that it has a smooth $\F_p$-point.
Otherwise (with probability $1/p$), the reduction mod $p$ is geometrically the union of
two rational curves meeting at $((1:0),(1:0))$ and an $\F_p$-point of the form
$((\lambda:1),(\mu:1))$. The two rational curves are not defined over $\F_p$,
since the binary quadratic form $f$ is irreducible. 
We make the substitutions 
$X_0 \leftarrow X_0 + \lambda  X_1$ and $Y_0 \leftarrow Y_0 + \mu  Y_1$
to move the second point of intersection to $((0:1),(0:1))$, 
and start over again
considering whether this singular point lifts.
The probability that it lifts is again $\alpha$.
We thus obtain the recursive formula
\[\alpha = \frac{1}{p}\left(\left(1 - \frac{1}{p}\right) + \frac{1}{p}
\,  \alpha \right),\]
and so $\alpha = 1/(p+1)$.

We are interested in the probability
that at least one of the singular points lifts. Since these
events depend on different coefficients of the $(2,2)$-form they
are independent. Therefore
\[  \xi_{11} = 1 - \left( 1- \frac{1}{p+1} \right)^2 = \frac{2p + 1}{(p+1)^2}. \]
A small modification of this argument (as required by
Definition~\ref{def:nuprime}) gives
\[  \xi'_{11} = 1 - \left( 1- \frac{1}{p+1} \right)\left( 1- \frac{p}{p+1} \right) = \frac{p^2 + p + 1}{(p+1)^2}. \]

\subsubsection*{Case 1(ii)} 
The two components meet at 
a pair of conjugate points defined over $\F_{p^2}$. 
There are no $\F_p$-points on the reduction.
Therefore $\xi_{12}=0$ and $\xi'_{12}$ is not defined.

\subsubsection*{Case 1(iii)}  
The two components meet at 
a single point defined over $\F_p$.
As in the proof of Lemma~\ref{lem:count1}(iii) 
we may assume that
$\overline{F} = f(X_0Y_0,X_0Y_1 + X_1Y_0)$ for some irreducible binary
quadratic form $f$. 
The only $\F_p$-point on the reduction is the singular point 
$((0:1),(0:1))$. If this lifts to a $\Q_p$-point then we must have
$v(a_{22}) \ge 2$. Since this is exactly the condition in 
Definition~\ref{def:nuprime} it follows that $\xi_{13} = (1/p) \xi'_{13}$.

We show in Section~\ref{sec:quartics}, using results from
\cite{BCF-binaryquartic}, that 
\[ \xi_{13} = \frac{2 p^{10} + 3 p^9 - p^5 
          + 2 p^4 - 2 p^2 - 3 p - 1}{2 (p + 1)^2(p^9 - 1)}. \]
As noted in the last paragraph, we have $\xi'_{13} = p \xi_{13}$.

\subsubsection{Case 2} This is the case $(2,0)(0,2)$.
There are no $\F_p$-points on the reduction.
Therefore $\xi_{2} = 0$ and $\xi'_{2}$ is not defined.

\subsubsection{Case 3} This is the case
$(2,0)(0,1)^2$ or $(0,2)(1,0)^2$.
We may assume without loss of generality that $\overline{F} = f(X_0,X_1) Y_0^2$
for some irreducible binary quadratic form $f$. The coefficients
satisfy
\begin{center}
\begin{tabular}{ccc}
$= 0$ & $\geq 1$ & $\geq 1$\\
$\geq 0$ & $\geq 1$ & $\geq 1$\\
$= 0$ & $\geq 1$ & $\geq 1$
\end{tabular}
\end{center}
where the equalities follow from $f$ being irreducible.
Making the substitution $Y_0 \leftarrow p Y_0$ and dividing
through by $p$ gives
\begin{center}
\begin{tabular}{ccc}
$= 1$ & $\geq 1$ & $\geq 0$\\
$\geq 1$ & $\geq 1$ & $\geq 0$\\
$= 1$ & $\geq 1$ & $\geq 0$
\end{tabular}
\end{center}
The reduction mod $p$ is now $g(X_0,X_1) Y_1^2$
for some binary quadratic form $g$. If $g$ is irreducible,
splits, or has repeated roots, then the probability of solubility
is $0$, $1$, or $1/2$, respectively. In the last of these cases, we are using
Lemma~\ref{L: solubility1/2}: more specifically, 
we assume the double root is at $(X_0:X_1) = (0:1)$, make
the substitution $X_0 \leftarrow p X_0$, divide through by $p$,
and then apply the lemma. 
Note that the lemma applies as $v(a_{20})=0$, and there are
no solutions with $p \mid X_1 Y_1$ in view of the substitutions
we made to reach this situation.

If $g$ is identically zero, then we divide through by $p$, to obtain
a $(2,2)$-form $H$ satisfying the {\em line condition}, by which we mean
that $H(X_0,X_1;1,0) \pmod{p}$
is an irreducible binary quadratic form. Writing $\delta_{\rm line}$
for the probability of solubility in this case, we have
\begin{equation}
\label{eqn:nu3a}
\xi_3 = \frac{1}{p^3} \left( \frac{p^3 - p}{2} \cdot 1 + \frac{p(p-1)^2}{2} \cdot 0 +
(p^2 - 1) \cdot \frac{1}{2}
  + \delta_{\rm line} \right) 
\end{equation}
and 
\begin{equation}
\label{eqn:nu3prime}
\xi'_3 = \frac{1}{p^2} \left( p(p-1) \cdot 1 + (p - 1) \cdot \frac{1}{2}
  + \delta_{\rm line} \right).
\end{equation}

\begin{lemma}
There are (up to scaling by $\F_p^\times$) exactly $p^7(p-1)/2$ forms over $\F_p$
satisfying the line condition. The numbers of these in Cases 1 to 3 are
\begin{align*}
 r_{11} &= p^3(p+1)(p-1)^2/4 \\
 r_{12} &= p^2(p+1)(p-1)^2(p-2)/4 \\
 r_{13} &= p^2(p+1)(p-1)^2/2 \\
 r_2 &= p^2(p - 1)^2/4 \\
 r_3 &= p^2(p - 1)/2 
\end{align*} 
There are none in Cases 4 and 5.
\end{lemma}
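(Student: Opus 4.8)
The plan is to reinterpret the line condition geometrically and then treat the total count and the factorization-type breakdown separately. Setting $Y_0 = 1$, $Y_1 = 0$ in \eqref{eq:bideg22} gives
\[ F(X_0,X_1;1,0) = a_{00}X_0^2 + a_{10}X_0X_1 + a_{20}X_1^2, \]
which is the restriction of the curve $C$ to the horizontal ruling $L = \PP^1 \times \{(1:0)\}$. Since $C$ has bidegree $(2,2)$, this restriction is a binary quadratic form in $X_0,X_1$, and the line condition asks exactly that the degree-two subscheme $C \cap L$ be a conjugate pair of points over $\F_p$, rather than a pair of $\F_p$-points or a double point.

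For the total, I would note that the line condition depends only on $a_{00},a_{10},a_{20}$, which must form an irreducible binary quadratic. By Lemma~\ref{lem:countbq} there are $(p-1)(p^2-p)/2 = p(p-1)^2/2$ such forms (those with conjugate roots in $\F_{p^2}$), while the remaining six coefficients are free, contributing a factor $p^6$. Dividing $p^7(p-1)^2/2$ by $p-1$ to account for scaling by $\F_p^\times$ gives $p^7(p-1)/2$. For the breakdown, I would factor a representative $F$ in each type and restrict to $L$. In Case~2, writing $F = f(X_0,X_1)g(Y_0,Y_1)$ with $f,g$ irreducible binary quadratics, the restriction is $g(1,0)f(X_0,X_1)$; since $g$ is irreducible, $g(1,0)\neq 0$, so $F|_L$ is always irreducible and $r_2 = n_2$. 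In Case~3 the subtype $(2,0)(0,1)^2$ gives $F = f(X_0,X_1)\ell(Y_0,Y_1)^2$ with restriction $\ell(1,0)^2 f(X_0,X_1)$, irreducible exactly when the repeated factor $\ell$ does not vanish at $(1:0)$, i.e.\ for $p$ of the $p+1$ choices of $\ell$; the subtype $(0,2)(1,0)^2$ restricts to $g(1,0)\ell(X_0,X_1)^2$, a perfect square, and never contributes. This gives $r_3 = m_{20}\cdot p = p^2(p-1)/2$. In Cases~4 and~5 the restriction is likewise a scalar multiple of a square of a linear form, hence never irreducible, so these contribute nothing.

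The main work, and the main obstacle, is Case~1, where $C = C_1 \cup \overline{C_1}$ is a union of two $(1,1)$-curves conjugate over $\F_{p^2}$. The key point is that, because $L$ is defined over $\F_p$, the single points $C_1 \cap L$ and $\overline{C_1}\cap L = \overline{C_1 \cap L}$ are Galois conjugate; hence $F|_L$ is irreducible if and only if these two points are distinct, which happens if and only if $L$ passes through none of the intersection (singular) points $C_1 \cap \overline{C_1}$. Concretely, the line condition fails exactly when some singular point of $C$ has second coordinate $(1:0)$. I would then rerun the three counts of Lemma~\ref{lem:count1} subject to this constraint. In case~(i) the two $\F_p$-rational singular points must avoid second coordinate $(1:0)$, which replaces the $p^2(p+1)^2/2$ choices for the unordered pair by $p^2(p+1)(p-1)/2$, a fraction $(p-1)/(p+1)$ of them, yielding $r_{11} = p^3(p+1)(p-1)^2/4$. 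In case~(iii) the single $\F_p$-rational singular point must avoid second coordinate $(1:0)$, replacing the $(p+1)^2$ choices by $p(p+1)$, a fraction $p/(p+1)$, yielding $r_{13} = p^2(p+1)(p-1)^2/2$.

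Case~(ii) needs the cleanest argument and is where I expect the only genuine subtlety. There the two singular points are conjugate over $\F_{p^2}$, so their second coordinates are conjugate; if one equaled the self-conjugate point $(1:0)$, then so would the other, contradicting the fact (used in the proof of Lemma~\ref{lem:count1}) that the singular points stay distinct under the second projection. Hence $L$ never meets the singular locus, the line condition holds automatically, and $r_{12} = n_{12} = p^2(p+1)(p-1)^2(p-2)/4$. Summing $r_{11},r_{12},r_{13},r_2,r_3$ (and recalling that forms with a smooth $\F_p$-point also satisfy the line condition, so this breakdown need not exhaust the $p^7(p-1)/2$ total) completes the proof, with the ratios $(p-1)/(p+1)$, $1$, $p/(p+1)$ serving as a useful consistency check against the geometric picture.
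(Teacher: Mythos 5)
Your proof is correct, and it rests on the same geometric fact that underlies the paper's proof: for a Case 1 form, the restriction $F|_L$ to the horizontal line $L = \PP^1 \times \{(1:0)\}$ is irreducible precisely when $L$ misses the singular points of the curve. The difference is in how that fact is converted into the counts $r_{11}$ and $r_{13}$. The paper uses a double-counting argument on incidences: it counts pairs consisting of a $(2,2)$-form (in Case 1(i) or 1(iii)) and a $(0,1)$-form meeting in a conjugate pair of points over $\F_{p^2}$; since the two (resp.\ one) rational singular points block exactly $2$ (resp.\ $1$) of the $p+1$ horizontal $\F_p$-lines, this yields $(p+1)r_{11} = (p-1)n_{11}$ and $(p+1)r_{13} = p\, n_{13}$ in one stroke. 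You instead re-run the parametrized counts of Lemma~\ref{lem:count1} with the singular-point configurations constrained to avoid $L$, obtaining the same ratios $(p-1)/(p+1)$ and $p/(p+1)$; the two computations are equivalent in content, with the paper's being more compact and yours making the geometry explicit. Your write-up also supplies details the paper compresses into ``easy to check'': the verification of the total $p^7(p-1)/2$ (the line condition constrains only $a_{00}, a_{10}, a_{20}$, which must form an irreducible binary quadratic), and the Galois argument for $r_{12} = n_{12}$ (conjugate singular points cannot project to the Galois-fixed point $(1:0)$ without violating distinctness under the second projection). Cases 2, 3, 4, and 5 are handled identically in both proofs.
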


\begin{proof}
It is easy to check that forms in Cases 1(ii) and 2 always satisfy
the line condition, and those in Cases 4 and 5 never satisfy the
line condition. By double counting pairs consisting of $(2,2)$-forms
and $(0,1)$-forms (both up to scalars) that meet in 
a pair of conjugate points over $\F_{p^2}$, we find that
$(p+1)r_{11} = (p-1) n_{11}$ and $(p+1)r_{13} = p n_{13}$.
In Case~3 we must count the forms $f(X_0,X_1) g(Y_0,Y_1)^2$
where $f$ is an irreducible binary quadratic form, and $g$ is a
linear form with $g(1,0) \not=0$. We find that $r_3$ is the product
of the $(p^2-p)/2$ choices for $f$ and the $p$ choices for $g$.
\end{proof}

Let $r_0 = p^7(p-1)/2 - (r_{11} + r_{12} + r_{13} + r_2 + r_3)$ be
the number of forms satisfying the line condition not in Cases~1 to~3. Then 
\begin{equation}
\label{eqn:nu3b}
\delta_{\rm line} = \frac{r_0 + r_{11} \xi_{11} + r_{12} \xi_{12} 
 + r_{13} \xi_{13} + r_2 \xi_2 + r_3 \xi_3}{p^7(p-1)/2} 
\end{equation}
Using the values of $\xi_{11}$, $\xi_{12}$, $\xi_{13}$ and $\xi_2$
already computed, we can now solve~\eqref{eqn:nu3a}
and~\eqref{eqn:nu3b} for $\xi_3$ and $\delta_{\rm line}$. We then 
use~(\ref{eqn:nu3prime}) to compute $\xi'_3$. We find that
\[ \xi_3 = \frac{p^{10} + 2 p^9 + p^6 - 2 p^5 
                 + 2 p^3 + p^2 - 3 p - 2}{2 (p+1)(p^9 - 1) }  \]
and 
\[ \xi'_3 = \frac{2 p^{10} + p^9 + p^7 - 2 p^6 + 2 p^4
              + p^3 - 2 p^2 - 2 p - 1}{2 (p+1) (p^9-1) }. \]

\subsubsection{Case 4} This is the case $(1,1)^2$. By a 
change of coordinates we may assume 
\[ F \equiv (X_0 Y_1 - X_1 Y_0)^2 \pmod{p}.\]

We show in Section~\ref{sec:quartics}, using results from
\cite{BCF-binaryquartic}, that 
\[ \xi_4 = \frac{5 p^{10} + 8 p^9 + p^8 - p^7 + 2 p^6 
              - 3 p^5 + 4 p^3 - 10 p - 6}{8 (p+1) (p^9-1)}, \]
and
\[ \xi'_4 = \frac{4 p^{10} + 3 p^9 - p^7 + 2 p^6 - 2 p^5 
            + 2 p^3 - p^2 - 5 p - 2}{4 (p+1) (p^9-1)}. \]

\subsubsection{Case 5}
This is the case $(1,0)^2(0,1)^2$. By a 
change of coordinates we may assume 
\[F \equiv X_0^2 Y_0^2 \pmod{p}.\]
The coefficients of $F$ have valuations satisfying
\begin{center}
\begin{tabular}{ccc}
$=0$ & $\geq 1$ & $\geq 1$\\
$\geq 1$ & $\geq 1$ & $\geq 1$\\
$\geq 1$ & $\geq 1$ & $\geq 1\rlap{.}$
\end{tabular}
\end{center}
Let $Q$ and $Q'$ be the binary quadratic forms over $\F_p$ determined
by the last row and column, i.e.,
\begin{align*}
Q(Y_0,Y_1) &= \tfrac{1}{p} F(0,1;Y_0,Y_1) \pmod{p} \\
Q'(X_0,X_1) &= \tfrac{1}{p} F(X_0,X_1;0,1) \pmod{p} 
\end{align*}
Note that these forms have the same last coefficient $c \in \F_p$ as they share one
entry in the coefficient matrix corresponding to $X_1^2Y_1^2$.
Writing $\xi_{51}$ and $\xi_{52}$ for the probabilities
of solubility in the cases $c \not= 0$ and $c=0$, respectively, we have
\[ \xi_5 = (1-1/p) \xi_{51} + (1/p) \xi_{52}. \]

\begin{lemma}
\label{lem:nu51}
 We have $\xi_{51} = 3/4$.
\end{lemma}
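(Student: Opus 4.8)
The plan is to show that, when $c \neq 0$, such an $F$ is $\Q_p$-soluble if and only if it has a $\Q_p$-point reducing to one of the two lines $X_0 = 0$ or $Y_0 = 0$, that these two events $A$ and $B$ are independent, and that each has probability $\tfrac12$; then $\xi_{51} = P(A \cup B) = 1 - (\tfrac12)^2 = \tfrac34$.

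First I would dispose of the intersection point. Since $F \equiv X_0^2 Y_0^2 \pmod p$, every $\Q_p$-point reduces onto the line $X_0 = 0$ or the line $Y_0 = 0$, and the only point on both is $((0:1),(0:1))$. Substituting $X_0 = pu$, $X_1 = 1$, $Y_0 = pv$, $Y_1 = 1$ and comparing valuations shows $v(F) = v(a_{22}) = 1$ for all $u,v \in \Z_p$: the $a_{22}$ term dominates because $c \neq 0$ forces $v(a_{22}) = 1$, while every other monomial acquires valuation $\geq 2$. Hence no $\Q_p$-point reduces to the intersection, and solubility is equivalent to $A \cup B$, where $A$ (resp.\ $B$) is the event that there is a $\Q_p$-point on the line $X_0 = 0$ (resp.\ $Y_0 = 0$).

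Next I would compute $P(A) = \tfrac12$. Writing $X_0 = pu$, $X_1 = 1$ and dividing by $p$, the form reduces to a binary quadratic $Q(Y_0,Y_1)$ independent of $u$, whose $Y_1^2$-coefficient is $c \neq 0$. Conditioned on $c \neq 0$, the remaining coefficients of $Q$ are uniform, and a discriminant count gives that $Q$ splits, is inert, or has a double root with probabilities $\tfrac{p-1}{2p}$, $\tfrac{p-1}{2p}$, $\tfrac1p$ respectively (for $p$ odd; $p=2$ is analogous). If $Q$ splits, each simple root yields a smooth $\F_p$-point that lifts by Hensel, so $A$ holds; if $Q$ is inert there are no $\F_p$-points in this chart and $A$ fails; if $Q$ has a double root, moving it to $(0:1)$ and performing the substitution $Y_0 \leftarrow pY_0$ exactly as in Case~3 puts us in the hypotheses of Lemma~\ref{L: solubility1/2}, giving conditional probability $\tfrac12$. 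Summing, $P(A) = \tfrac{p-1}{2p} + \tfrac1p \cdot \tfrac12 = \tfrac12$, and by the $X \leftrightarrow Y$ symmetry $P(B) = \tfrac12$, governed by the analogous binary quadratic $Q'$ in the $X$-variables.

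Finally I would establish the independence of $A$ and $B$, which is the crux. Granting it, $P(A^c \cap B^c) = P(A^c)\,P(B^c) = \tfrac12 \cdot \tfrac12 = \tfrac14$, whence $\xi_{51} = 1 - \tfrac14 = \tfrac34$. When $Q$ or $Q'$ is split or inert the independence is manifest, since $A$ is then determined by $Q$ (built from $a_{20}, a_{21}$) and $B$ by $Q'$ (built from $a_{02}, a_{12}$), and these are independent once the shared corner $c = \overline{a_{22}/p} \neq 0$ is fixed. The delicate sub-case is when both $Q$ and $Q'$ have double roots: both refinements then descend to deeper $p$-adic digits and a priori both can involve $a_{02}$ and $a_{12}$, so I must check that the binary quadratics whose splitting governs $A$ and $B$ through Lemma~\ref{L: solubility1/2} depend on disjoint coefficient data and hence split independently, each with probability exactly $\tfrac12$. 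I expect this decoupling, rather than any individual probability computation, to be the main obstacle; I would resolve it by tracking precisely which coefficients enter each invocation of Lemma~\ref{L: solubility1/2}, or equivalently by enumerating the $3 \times 3$ possibilities for the types of $(Q,Q')$ and verifying that the weighted contributions sum to $\tfrac34$.
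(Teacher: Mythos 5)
Your set-up agrees with the paper's: any $\Q_p$-point must reduce to exactly one of the two double lines (since $v(a_{22})=1$), and the two events $A$ (point with $p \mid X_0$) and $B$ (point with $p \mid Y_0$) each have probability $1/2$. For the marginal probability, however, the paper does not redo the split/inert/double-root analysis: after substituting $X_0 \leftarrow pX_0$, dividing by $p$, and swapping $Y_0 \leftrightarrow Y_1$, the hypotheses of Lemma~\ref{L: solubility1/2} are met verbatim, and that lemma (whose own proof contains exactly the recursion you describe) is applied once. Your unrolled version is correct but redundant, and your appeal to the lemma in the double-root sub-case needs the same bookkeeping you are trying to avoid (after moving the root and zooming one must transpose $X \leftrightarrow Y$ to land in the lemma's hypotheses, just as in Case~3).

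The genuine gap is the independence of $A$ and $B$, which you explicitly leave open, and the route you sketch for closing it would fail. In the double--double case the relevant data is \emph{not} disjoint: tracking the substitutions shows that the quadratic governing $A$ at the next level has middle coefficient $\overline{y_*^2\,(a_{10}/p) + y_*\,(a_{11}/p) + (a_{12}/p)}$ (where $y_*$ is the double root of $Q$), so it involves $a_{12}$, one of the coefficients of $Q'$, as well as digits of $a_{11}$; symmetrically, $B$'s next-level quadratic involves $a_{21}$ and the same shared digits of $a_{11}$ and $a_{22}$. So "disjoint coefficient data'' is simply false, and your fallback of enumerating the $3\times 3$ types of $(Q,Q')$ does not terminate either, since the (double,\,double) cell forces a further recursion of the same shape. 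What the paper uses instead is the uniformity built into Lemma~\ref{L: solubility1/2}: its conclusion (probability exactly $1/2$) holds for \emph{every} fixed admissible value of the six complementary coefficients, and the triple randomized in the application to $A$, namely $(a_{10},a_{20},a_{21})$, is disjoint from the triple randomized in the application to $B$, namely $(a_{01},a_{02},a_{12})$. It is this "$1/2$ conditional on everything else'' statement — not disjointness of the coefficients that surface in the deeper recursions — that yields $P(A \cap B) = 1/4$ (on the events where $Q'$ is split or inert, $B$ is determined by the fixed data and the lemma gives $A$ conditional probability $1/2$; the entangled double-root events are then handled by the same uniformity one level down). Your proposal never invokes this "for any fixed coefficients'' clause for the joint event, and that is precisely the missing idea.
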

\begin{proof}
The coefficients satisfy
\begin{center}
\begin{tabular}{ccc}
$=0$ & $\geq 1$ & $\geq 1$\\
$\geq 1$ & $\geq 1$ & $\geq 1$\\
$\geq 1$ & $\geq 1$ & $= 1 \rlap{.}$ 
\end{tabular}
\end{center}
The reduction mod $p$ is the union of two double lines, meeting
at a single point. Any $\Q_p$-point has $p \mid X_0$ or $p \mid Y_0$, but
not both since $v(a_{22})=1$. In other words, any $\Q_p$-point must
reduce to lie on exactly one of the lines.

To investigate whether there are solutions with $p \mid X_0$ we make 
the substitution $X_0 \leftarrow pX_0$ and divide by $p$ to get
\begin{center}
\begin{tabular}{ccc}
$=1$ & $\geq 2$ & $\geq 2$\\
$\geq 1$ & $\geq 1$ & $\geq 1$\\
$\geq 0$ & $\geq 0$ & $= 0\rlap{.}$
\end{tabular}
\end{center}
We then apply Lemma~\ref{L: solubility1/2} (with $Y_0 \leftrightarrow Y_1$).
The probability of a solution with $p \mid X_0$ and the probability of a solution with $p \mid Y_0$ are each
$1/2$. The lemma also implies that these two events are independent of
each other, so the probability of insolubility of these polynomials is
$1/4$. Hence, the probability of solubility is $3/4$.
\end{proof}
 
\begin{definition}
Let $\delta_1$ and $\delta_2$ be the probabilities of 
solubility in the cases 
\begin{center}
\begin{tabular}{ccc}
$\geq 1$ & $\geq 1$ & $\geq 0$\\
$\geq 1$ & $\geq 0$ & $\geq 0$\\
$   = 0$ & $\geq 0$ & $\geq 0$
\end{tabular}
\qquad 
\begin{tabular}{ccc}
$\geq 1$ & $\geq 1$ & $   = 0$\\
$\geq 1$ & $\geq 0$ & $\geq 0$\\
$   = 0$ & $\geq 0$ & $\geq 0$
\end{tabular}
\end{center}
(The subscript is the number of equalities in the matrix.)
Let $\delta^*_1$ and $\delta^*_2$ be the probabilities when we change
the top left $\geq 1$ to $=1$. 
Let $\varepsilon_1$ and $\varepsilon_2$ be the probabilities when we change
the top left $\geq 1$ to $\geq 2$. 
\end{definition}

Clearly we have
\begin{equation}
\begin{aligned}
\delta_1 = ( 1- 1/p) \delta^*_1 + (1/p) \varepsilon_1 \\
\delta_2 = ( 1- 1/p) \delta^*_2 + (1/p) \varepsilon_2 
\end{aligned}
\end{equation}

\begin{lemma} 
We have
\[ \xi_{52} = \left( 1 - \frac{1}{p^2} \right)
  + \frac{1}{p^2} \left(  \frac{1}{p}  \left( 1- \frac{1}{p} \right)^2 \,
\delta^*_2 + 2 \, \frac{1}{p} \left( 1 - \frac{1}{p} \right) \delta^*_1
    + \left( \frac{1}{p} \right)^2 \varepsilon_1 \right) \] 
\end{lemma}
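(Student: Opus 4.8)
The plan is to decompose the probability space for Case~5 with $c=0$ according to how a $\Q_p$-point can reduce, just as in the other cases. Here $\overline{F}=X_0^2Y_0^2$ is a pair of double lines meeting at the singular point $s=((0:1),(0:1))$, and $v(a_{22})\ge 2$. Since $\overline F=X_0^2Y_0^2$, every $\Q_p$-point reduces onto one of the two lines $X_0=0$ or $Y_0=0$. To search for liftable points on $X_0=0$ I would substitute $X_0\leftarrow pX_0$ and divide by $p$; the new reduction is $X_1^2 Q(Y_0,Y_1)$, where $Q$ is the binary quadratic form obtained from the bottom row $(a_{20},a_{21},a_{22})$ after dividing by $p$, and symmetrically the line $Y_0=0$ produces $Y_1^2 Q'(X_0,X_1)$ from the right column. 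By Lemma~\ref{L: affinebinquad}, a simple root of $Q$ (resp.\ $Q'$) is a smooth $\F_p$-point of the reduced curve, which lifts to a $\Q_p$-point by Hensel's lemma.

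First I would extract the leading term $1-1/p^2$. Because $c=0$, both $Q$ and $Q'$ already vanish at $s$, so each factors as $Y_0$ (resp.\ $X_0$) times a second linear form; $Q$ acquires a \emph{distinct} second root precisely when $v(a_{21})=1$, and $Q'$ precisely when $v(a_{12})=1$. These two conditions involve disjoint coefficients, hence are independent, each of probability $1-1/p$. With probability $1-1/p^2$ at least one holds, producing a smooth $\F_p$-point and hence solubility; this accounts for the summand $1-1/p^2$.

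Next I would treat the complementary event, of conditional probability $1/p^2$, in which $v(a_{21})\ge 2$ and $v(a_{12})\ge 2$, so $Q$ and $Q'$ have at most a double root at $s$ and every $\Q_p$-point must reduce to $s$. I would resolve this by subdividing on the valuations of the corner coefficients $a_{20}$ and $a_{02}$, which record whether the restricted forms are nonzero double lines: (i) if exactly one of them has valuation $1$ (probability $2\cdot\tfrac1p(1-\tfrac1p)$ by the symmetry between the two lines), exactly one line is an honest double line, and after the relevant coordinate swap together with the substitution above one lands in the configuration of Definition~\ref{def:nuprime} whose solubility probability is $\delta^*_1$, the top-left coefficient having valuation exactly $1$; (ii) if both have valuation $\ge 2$ (probability $1/p^2$), the restricted forms vanish, a further division forces the top-left valuation to be $\ge 2$, and one lands in the $\varepsilon_1$ configuration; (iii) if both have valuation $1$ (probability $(1-\tfrac1p)^2$), both lines are honest double lines and one must decide whether the crossing point $s$ itself lifts, which requires an additional zoom at $s$. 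Collecting (i)--(iii) and multiplying by the prefactor $1/p^2$ yields the stated bracketed expression, provided case (iii) contributes exactly $\tfrac1p\,\delta^*_2$ with the remaining $(1-\tfrac1p)$-fraction insoluble; the reference values $\delta^*_1,\delta^*_2,\varepsilon_1$ are those computed in Lemma~\ref{lem:nu5prime}.

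The hard part will be case (iii) and, more broadly, the bookkeeping that identifies each zoomed valuation table with a reference configuration. The naive symmetric zoom at $s$ (substituting $X_0\leftarrow pX_0$, $Y_0\leftarrow pY_0$) is self-similar, returning a Case~5 configuration with $c=0$ of the same shape; so I would have to argue that the recursion terminates correctly and that the single extra factor $\tfrac1p$ in front of $\delta^*_2$ arises from the valuation split at $s$ (governed by the inner corner $v(a_{22})$) that breaks the self-similarity, while the complementary mass is genuinely insoluble. I would also need to verify that the extra valuation constraints carried by the zoomed tables—which are strictly stronger than those imposed in Definition~\ref{def:nuprime}—do not affect solubility, so that the probabilities $\delta^*_1,\delta^*_2,\varepsilon_1$ legitimately apply; the independence statements used for the product probabilities then follow because in each case the governing coefficients are disjoint.
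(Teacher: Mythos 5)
Your decomposition is the one the paper itself uses: the leading term $1-1/p^2$ comes from $v(a_{12})=1$ or $v(a_{21})=1$ (each giving a simple root of $Q'$ or $Q$, hence a smooth point that Hensel lifts), and the residual event of probability $1/p^2$ is split on the corner valuations $v(a_{02})$, $v(a_{20})$, with ``exactly one equal to $1$'' landing in $\delta^*_1$ and ``both $\geq 2$'' landing in $\varepsilon_1$; those identifications and their probabilities are correct. However, you leave the case $v(a_{02})=v(a_{20})=1$ as a proviso (``provided case (iii) contributes exactly $\tfrac1p\,\delta^*_2$ with the remaining $(1-\tfrac1p)$-fraction insoluble''), and that is exactly the nontrivial content of the lemma, so this is a genuine gap. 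The missing argument is short and involves no recursion. In this case every solution has $p\mid X_0$ and $p\mid Y_0$ with $X_1,Y_1$ units (as you observe); substituting $X_0\leftarrow pX_0$, $Y_0\leftarrow pY_0$ multiplies $a_{ij}$ by $p^{4-i-j}$, after which the $a_{22}$-term has valuation exactly $v(a_{22})$ while every other term has valuation at least $3$. Hence if $v(a_{22})=2$ (conditional probability $1-\tfrac1p$, given $v(a_{22})\ge 2$), the form cannot vanish at any admissible point, so $F$ is insoluble; and if $v(a_{22})\ge 3$ (conditional probability $\tfrac1p$), dividing through by $p^3$ produces a form whose valuation table is \emph{exactly} the $\delta^*_2$ table (top-left entry $=1$ from $pa_{00}$; the $(0,2)$ and $(2,0)$ entries $=0$ from $a_{02}/p$ and $a_{20}/p$; the $(0,1)$ and $(1,0)$ entries $\ge 1$; the rest $\ge 0$), and solubility of $F$ is equivalent to solubility of this new form. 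This is what produces the coefficient $\frac1p\left(1-\frac1p\right)^2\delta^*_2$ and kills the complementary mass.

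Your stated worry that the symmetric zoom is ``self-similar, returning a Case~5 configuration of the same shape'' is a misconception that would lead you astray. Precisely because the corners have valuation exactly $1$, the zoom either terminates at once in insolubility (when $v(a_{22})=2$, via the valuation-domination argument above) or lands in the $\delta^*_2$ configuration (when $v(a_{22})\ge 3$), whose reduction contains the monomials $X_0^2Y_1^2$ and $X_1^2Y_0^2$ with unit coefficients and is therefore not of Case~5 type; no termination argument is needed. Likewise, the ``strictly stronger valuation constraints'' you propose to verify do not occur: in each of the three sub-cases the zoomed table coincides exactly with the table defining $\delta^*_1$, $\delta^*_2$, or $\varepsilon_1$, after swapping $Y_0\leftrightarrow Y_1$ and interchanging the roles of $X$ and $Y$ where needed. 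Finally, these quantities are not ``computed in Lemma~\ref{lem:nu5prime}''; at this stage they are unknowns, defined only by their valuation tables, and they are solved for afterwards from the linear system built out of the $s$-counts. The genuine circularity in the argument (namely that $\delta_1$ depends on $\xi_5$ itself) is resolved by that linear system, not within this lemma.
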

\begin{proof}
If at least one of the forms $Q$ and $Q'$ has distinct roots in $\F_p$
then the $(2,2)$-form is soluble over $\Q_p$. This happens with
probability $1- 1/p^2$. Otherwise (with probability $1/p^2$) the coefficients
satisfy
\begin{center}
\begin{tabular}{ccc}
$   = 0$ & $\geq 1$ & $\geq 1$\\
$\geq 1$ & $\geq 1$ & $\geq 2$\\
$\geq 1$ & $\geq 2$ & $\geq 2$
\end{tabular}
\end{center}
(The bottom right $\geq 2$ comes from the assumption that $c = 0$, and the two adjacent $\geq 2$ entries arise from assuming that neither $Q$ nor $Q'$ has distinct roots in $\F_p$.)
We split into 3 cases:
\begin{enumerate}[topsep=0.5\baselineskip,itemsep=0.5\baselineskip]
\item Suppose $v(a_{02}) = v(a_{20})=1$. If $v(a_{22})=2$ then 
the $(2,2)$-form is insoluble over $\Q_p$. 
Otherwise, we find by substituting $X_0 \leftarrow p X_0$,
$Y_0 \leftarrow p Y_0$, and dividing through by $p^3$, that
the probability of solubility is $\delta^*_2$.
\item Suppose $v(a_{02}) = 1$ and $v(a_{20}) \ge 2$. We find by
substituting $X_0 \leftarrow p X_0$, and dividing through by $p^2$, 
that the probability of solubility is $\delta^*_1$. 
The case where $v(a_{02}) \ge 2$ and $v(a_{20}) = 1$ works in 
exactly the same way  via the substitution $Y_0 \leftarrow p Y_0$. 
\item Suppose $v(a_{02}) \ge 2$ and $v(a_{20}) \ge 2$.
Via either of the substitutions in (ii),
the probability of solubility is $\varepsilon_1$.
\end{enumerate}
Combining these gives the desired expression for $\xi_{52}$.
\end{proof}

\begin{lemma} 
\label{lem:nu5prime}
We have
\[ \xi'_{5} = \left( 1 - \frac{1}{p} \right)
  + \frac{1}{p} \left( 1- \frac{1}{p} \right) \frac{3}{4}
  + \left( \frac{1}{p} \right)^2 \left( 1- \frac{1}{p} \right)
 + \left( \frac{1}{p} \right)^3 \delta_1. \]
\end{lemma}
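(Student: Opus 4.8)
The plan is to normalize coordinates, reduce $\Q_p$-solubility to Hensel liftings on the two double lines, and organize the whole computation as a four-way stratification according to the valuations $\bigl(v(a_{21}),v(a_{20}),v(a_{10})\bigr)$, which reproduces the four terms of the asserted formula one at a time.

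First I would fix the normalization. We are in Case~5 with the singular point $((0:1),(0:1))$ lying on one double line but away from the intersection of the two lines. After a coordinate change that fixes $((0:1),(0:1))$ and moves the other line to $Y_1=0$ (the substitution $Y_1\leftarrow Y_1-cY_0$ leaves the coefficient $a_{22}$ of $X_1^2Y_1^2$ unchanged, so the hypothesis $v(a_{22})\ge 2$ is preserved), we may assume $\overline F=\overline{a_{02}}X_0^2Y_1^2$ with $a_{02}$ a unit; the vertical line $X_0=0$ passes through $((0:1),(0:1))$, the horizontal line $Y_1=0$ does not, and they meet at $((0:1),(1:0))$. The valuation table is
\[
\begin{matrix}
v(a_{00}) \ge 1 & v(a_{01}) \ge 1 & v(a_{02}) = 0 \\
v(a_{10}) \ge 1 & v(a_{11}) \ge 1 & v(a_{12}) \ge 1 \\
v(a_{20}) \ge 1 & v(a_{21}) \ge 1 & v(a_{22}) \ge 2 .
\end{matrix}
\]
Since $\overline F=\overline{a_{02}}X_0^2Y_1^2$, every $\Q_p$-point reduces onto one of the two lines, and I would detect solubility by restricting to $X=(0:1)$, giving the binary quadratic $C(Y)=a_{20}Y_0^2+a_{21}Y_0Y_1+a_{22}Y_1^2$, and to $Y=(1:0)$, giving $P(X)=a_{00}X_0^2+a_{10}X_0X_1+a_{20}X_1^2$, lifting simple roots by Hensel and recursing on the degenerate cases.

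The four disjoint strata then produce the four terms. If $v(a_{21})=1$ (probability $1-1/p$), then $\tfrac1p C(Y_0,1)$ has constant term $\equiv a_{22}/p\equiv 0$ and linear coefficient $\equiv a_{21}/p\ne 0$, so it has a simple root at $Y_0\equiv 0$ that Hensel lifts; the marked point lifts and the curve is soluble, giving the first term. If instead $v(a_{21})\ge 2$ but $v(a_{20})=1$ (probability $\tfrac1p(1-1/p)$), the reduction is two double lines meeting at $((0:1),(1:0))$ with the opposite corner $a_{20}$ of valuation exactly $1$; interchanging $Y_0\leftrightarrow Y_1$ turns this into exactly the configuration of Lemma~\ref{lem:nu51} (unit $a_{00}$, $v(a_{22})=1$), whose proof applies Lemma~\ref{L: solubility1/2} independently to each line to give solubility $3/4$, yielding the second term. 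If $v(a_{21}),v(a_{20})\ge 2$ but $v(a_{10})=1$ (probability $\tfrac1{p^2}(1-1/p)$), then $\overline{\tfrac1p P}=\overline{a_{00}/p}\,X_0^2+\overline{a_{10}/p}\,X_0X_1=X_0(\overline{a_{00}/p}\,X_0+\overline{a_{10}/p}\,X_1)$ has a simple root at $X=(0:1)$, which lifts to a zero of $P$ and hence to a $\Q_p$-point on $Y=(1:0)$; the curve is soluble, giving the third term. Finally, when $v(a_{21}),v(a_{20}),v(a_{10})$ are all $\ge 2$ (probability $1/p^3$), I would apply the substitution $X_0\leftarrow pX_0$, rescale by $p^2$, and identify the resulting form (after a reflection of coordinates) with the configuration defining $\delta_1$, contributing the last term. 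Summing over the four strata gives precisely the claimed value of $\xi'_5$.

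The main obstacle is the identification of the two degenerate strata with the already-computed quantities $\xi_{51}$ and $\delta_1$. For the $3/4$ stratum one must verify that the extra vanishing conditions $v(a_{21})\ge 2$ and $v(a_{22})\ge 2$ only constrain coefficients that are held fixed in Lemma~\ref{L: solubility1/2}, so that the two line-events remain independent with probability exactly $1/2$ each and the value is exactly $3/4$. For the $\delta_1$ stratum the bookkeeping is most delicate: one must check that the substitution $X_0\leftarrow pX_0$ reproduces the $\delta_1$ configuration and that the points with $p\nmid X_0$ are correctly accounted for, so that the conditional solubility of the deepest stratum is exactly $\delta_1$ and not merely a sub-probability. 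Everything else is routine Hensel lifting and the elementary observation that the three strata-defining valuations involve disjoint coefficients, so the strata genuinely partition the probability space.
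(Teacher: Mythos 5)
Your proposal is, up to a harmless relabeling of coordinates, the same proof as the paper's: your stratification by $\bigl(v(a_{21}),v(a_{20}),v(a_{10})\bigr)$ is exactly the paper's stratification by $\bigl(v(a_{12}),v(a_{22}),v(a_{21})\bigr)$ read through that relabeling, your Hensel arguments in the first and third strata are the paper's, and your treatment of the deepest stratum (substitute $X_0 \leftarrow pX_0$, rescale by $p^2$, transpose, land on the $\delta_1$ configuration) is the paper's reduction to $\delta_1$. Your worry about points with $p \nmid X_0$ in that last stratum is vacuous: the substitution is a $\Q_p$-linear change of coordinates on $\PP^1 \times \PP^1$, so it preserves solubility exactly, and the induced map on coefficient space is measure-preserving onto the $\delta_1$ configuration.

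The genuine gap is in the $3/4$ stratum. Interchanging $Y_0 \leftrightarrow Y_1$ does \emph{not} produce exactly the configuration of Lemma~\ref{lem:nu51}: your stratum carries the extra conditions $v(a_{21})\ge 2$ and $v(a_{22})\ge 2$, and your proposed repair---that these extra conditions only constrain coefficients held \emph{fixed} in Lemma~\ref{L: solubility1/2}---is false for one of the two lines. Concretely, in your coordinates consider solutions with $p \mid X_0$, i.e., on the double line through the marked point. Following Lemma~\ref{lem:nu51} you would substitute $X_0 \leftarrow pX_0$ and divide by $p$, producing a form whose rows of coefficients are $pa_{00}, pa_{01}, pa_{02}$, then $a_{10}, a_{11}, a_{12}$, then $a_{20}/p, a_{21}/p, a_{22}/p$; this matches the shape of Lemma~\ref{L: solubility1/2} with $a_{21}/p$ and $a_{22}/p$ sitting in two of the three \emph{random} slots, which the lemma requires to be uniform in $\Z_p$. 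But your stratum forces $a_{21}/p,\,a_{22}/p \in p\Z_p$, so the lemma's hypotheses fail precisely there: the extra vanishing lands on random coefficients, not fixed ones. (For the other line, with $p \mid Y_1$, the random slots are filled by $a_{00},a_{01},a_{10}$, which are unconstrained beyond $v\ge 1$, so that half of your argument is fine.) The missing idea---the content of the parenthetical remark in the paper's proof---is that under these extra conditions the reduction of the substituted form is $\overline{(a_{20}/p)}\,X_1^2Y_0^2$, so any solution on this line must also have $p \mid Y_0$, i.e., must reduce to the marked point itself; one therefore substitutes $Y_0 \leftarrow pY_0$ and divides by $p$ a second time, and only then applies Lemma~\ref{L: solubility1/2}, whose random coefficients become $a_{21}/p$, $a_{12}/p$, $a_{22}/p^2$ and are now distributed exactly as the lemma demands. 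This still gives $1/2$ for that line, so your value $3/4$ and the final formula are correct, but as written your justification of this stratum does not go through.
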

\begin{proof}
According to Definition~\ref{def:nuprime},
we may suppose the coefficients of $F$ satisfy
\begin{center}
\begin{tabular}{ccc}
$   = 0$ & $\geq 1$ & $\geq 2$\\
$\geq 1$ & $\geq 1$ & $\geq 1$\\
$\geq 1$ & $\geq 1$ & $\geq 1$
\end{tabular}
\end{center}
If $v(a_{12}) = 1$, then $Q'$ has distinct roots
in $\F_p$ and so $F$ is soluble over $\Q_p$. Otherwise (with
probability $1/p$), we have $v(a_{12}) \ge 2$. 

If $v(a_{22}) = 1$, then
by an argument 
similar to Lemma~\ref{lem:nu51},
the probability of
solubility is $3/4$. 
(The solutions with $p \mid X_0$ are analysed in exactly the
same way as before, whereas to analyse those with $p \mid Y_0$ we
substitute $Y_0 \leftarrow p Y_0$ and then $X_1 \leftarrow pX_1$.)

Otherwise (with probability $1/p$), we have
$v(a_{22}) \ge 2$. If $v(a_{21}) = 1$, then $Q$ 
has distinct roots in $\F_p$ and so $F$ is soluble over
$\Q_p$. Otherwise (with probability $1/p$), we have $v(a_{21}) \ge 2$.
We find by substituting $Y_0 \leftarrow p Y_0$ and then dividing 
through by $p^2$ that the 
probability of solubility is $\delta_1$.
\end{proof}

\begin{lemma}
There are $p^5$ possibilities
for $\overline{F}$ (up to scaling by $\F_p^\times$) 
satisfying the conditions in the definition of 
$\delta_1$. The number of these in Cases 1 to 5 are
\begin{align*}
s_{11} &= p^3 (p-1)/2, \\
s_{12} &= 0,           \\
s_{13} &= p (p-1)^2/2, \\
s_2 &= 0,             \\
s_3 &= p (p-1)/2,     \\
s_4 &= p (p-1)        \\
s_5 &= p.            
\end{align*}
In Cases 1(i), 1(iii), and 4, these forms also satisfy the conditions
in the definition of $\delta_2$. In Cases 3 and 5, they do not.
\end{lemma}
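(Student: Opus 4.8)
The plan is to enumerate the $p^5$ reductions $\overline F$ directly and sort them by factorization type. First, after using the $\F_p^\times$-scaling to normalize $\bar a_{20}=1$, the defining inequalities of $\delta_1$ force $\bar a_{00}=\bar a_{01}=\bar a_{10}=0$ and leave $\bar a_{02},\bar a_{11},\bar a_{12},\bar a_{21},\bar a_{22}$ free in $\F_p$, giving the total $p^5$. I would record at the outset that $\overline F(1,0;1,0)=\bar a_{00}=0$, so $((1:0),(1:0))$ is always an $\F_p$-point of $\overline C$ (indeed a singular one, as $\bar a_{01}=\bar a_{10}=0$ kills the linear terms there). Since the curves in Cases 1(ii) and 2 have no $\F_p$-points at all (for Case 2 by Lemma~\ref{lemma:number for each splitting type}, and for Case 1(ii) because the two conjugate components meet only at $\F_{p^2}$-points), this immediately yields $s_{12}=s_2=0$.

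Next I would split on whether $\bar a_{02}=0$. If $\bar a_{02}=0$ then $X_1\mid\overline F$, so $\overline F=X_1G$ with $G$ of bidegree $(1,2)$; writing $\ell=\bar a_{11}Y_0+\bar a_{12}Y_1$ and $Q=Y_0^2+\bar a_{21}Y_0Y_1+\bar a_{22}Y_1^2$, a short $\gcd$ computation (using $Y_1\nmid Q$, which holds as $\bar a_{20}\ne 0$) shows that $\overline F$ has a smooth point unless $\ell=0$. When $\ell=0$ we have $\overline F=X_1^2Q$, and Lemma~\ref{L: affinebinquad} partitions the $p^2$ choices of $Q$ into $p(p-1)/2$ irreducible (Case 3), $p$ with a double root (Case 5), and $p(p-1)/2$ split (smooth point), giving $s_3=p(p-1)/2$ and $s_5=p$.

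If instead $\bar a_{02}\ne 0$, I would exploit the conic-bundle structure of the projection to the second factor, whose branch locus is cut out by $\disc_X=Y_1^2\Delta$ with $\Delta=(\bar a_{11}Y_0+\bar a_{12}Y_1)^2-4\bar a_{02}(Y_0^2+\bar a_{21}Y_0Y_1+\bar a_{22}Y_1^2)$. The type is then read off from $\Delta$: distinct roots give an absolutely irreducible curve, hence a smooth point by Lemma~\ref{lem:irred}; $\Delta\equiv 0$ forces $\bar a_{11}^2=4\bar a_{02}\ne 0$, so $\overline F$ is the square of an irreducible $(1,1)$-form (Case 4); and $\Delta=c\ell^2\ne 0$ makes the double cover split, over $\F_{p^2}$ but not $\F_p$ precisely when $c$ is a nonsquare, which is Case 1, with subcase 1(iii) when $\ell\propto Y_1$ and 1(i) otherwise. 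Parametrizing $\Delta$ by its coefficients and counting the nonsquare leading terms then yields $s_4=p(p-1)$, $s_{13}=p(p-1)^2/2$, and $s_{11}=p^3(p-1)/2$. The $\delta_2$ assertion is finally immediate: the definitions of $\delta_1$ and $\delta_2$ differ only in the extra requirement $v(a_{02})=0$, i.e.\ $\bar a_{02}\ne 0$, which holds in Cases 1(i), 1(iii), 4 (all obtained under $\bar a_{02}\ne 0$) and fails in Cases 3, 5 (obtained under $\bar a_{02}=0$).

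The step I expect to be most delicate is the discriminant analysis when $p=2$, where completing the square and the usual discriminant formula are invalid; there one must instead use the $2$-adically rescaled discriminant (as already flagged in the Case 1(i) argument) or classify by hand. Since the full family over $\F_2$ contains only $2^5=32$ forms, I would simply verify the $p=2$ tallies by direct inspection to confirm that the polynomial formulas hold in all characteristics.
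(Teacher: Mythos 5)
Your proof is correct, but it takes a genuinely different route from the paper's. The paper first translates the $\delta_1$ conditions into geometry (singular at $((1:0),(1:0))$ and not containing the line $Y_1=0$) and then gets the two hardest counts by double counting against the totals of Lemma~\ref{lem:count1}: since $\PGL_2\times\PGL_2$ acts transitively on $\F_p$-points of $\PP^1\times\PP^1$ and Case~1 curves contain no lines, one has $(p+1)^2 s_{11}=2n_{11}$ and $(p+1)^2 s_{13}=n_{13}$; the counts $s_3,s_4,s_5$ are read off from explicit normal forms, and the $\delta_2$ statement follows by observing that only Cases 3 and 5 contain the line $X_1=0$. You instead stratify the $p^5$ coefficient tuples directly: when $\bar a_{02}=0$ you factor out $X_1$, and your gcd analysis correctly places the $\ell\neq 0$ forms among types with smooth points (namely $(1,2)(1,0)$ and $(1,1)(1,0)(0,1)$), reducing to $X_1^2Q$ and Lemma~\ref{L: affinebinquad}; when $\bar a_{02}\neq 0$ you classify by the discriminant $Y_1^2\Delta$ of $\overline{F}$ as a binary quadratic in $(X_0,X_1)$, the trichotomy on $\Delta$ (distinct roots / identically zero / a double root, the last split by whether the leading scalar is a square) yielding absolute irreducibility, Case~4, and Case~1 respectively, with the location of the double root of $\Delta$ separating 1(i) from 1(iii); your parametrized counts check out and reproduce $s_4$, $s_{13}$, $s_{11}$. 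What the paper's approach buys is brevity and characteristic independence (it reuses $n_{11},n_{13}$ already computed, and nothing degenerates at $p=2$); what yours buys is self-containedness (no appeal to Lemma~\ref{lem:count1}) plus the fact that the $\delta_2$ assertion and the absence of Case 1(ii) fall out of the case split for free --- at the cost of the $p=2$ degeneration of the discriminant, which you rightly flag and which your proposed inspection of the $2^5$ forms would legitimately settle. Two small points to tighten: you overload $\ell$ (it denotes both $\bar a_{11}Y_0+\bar a_{12}Y_1$ and the double root of $\Delta$, which are different linear forms), and in the nonsquare case you should say explicitly why the conjugate factors are irreducible $(1,1)$-forms over $\F_{p^2}$ --- the alternative type $(2,0)(0,2)$ is excluded because such curves have no $\F_p$-points, whereas all of your forms pass through $((1:0),(1:0))$.
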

\begin{proof}
The conditions in the definition of $\delta_1$ are that 
$\overline{F}=0$ is singular at $((1:0),(1:0))$ but  
does not contain the line $Y_1=0$.
We have $(p+1)^2 s_{11} = 2 n_{11}$ and $(p+1)^2 s_{13} = n_{13}$.
In Cases~1(ii) and~2, there are no $\F_p$-points so 
$s_{12} = s_{2} = 0$. In Cases~3, 4, and 5, we count the forms
$X_1^2 f(Y_0,Y_1)$, $(\alpha X_1 Y_0 + \beta X_0 Y_1 + \gamma X_1 Y_1)^2$,
and $X_1^2 g(Y_0,Y_1)^2$ where $f$ is an irreducible binary 
quadratic form, $\alpha, \beta, \gamma \in \F_p^\times$ with
$\alpha \beta \not= 0$, and $g$ is a linear form with $g(1,0) \not= 0$.
Finally, it is only in Cases 3 and 5 that the reduction mod $p$
contains the line $X_1 = 0$.
\end{proof}

For the final computation of $\xi_5$, let 
$s_0 = p^5 - (s_{11} + s_{13} + s_{3} + s_{4} + s_{5})$
and $t_0 = p^4(p-1) - (s_{11} + s_{13} + s_{4})$ so that
\begin{align*}
\delta_1 &= \frac{s_0 + s_{11} \xi_{11} + s_{13} \xi_{13} 
                  + s_3 \xi_3 + s_4 \xi_4 + s_5 \xi_5}
  {p^5}  \\
\delta_2 &= \frac{t_0 + s_{11} \xi_{11} + s_{13} \xi_{13} 
                  + s_4 \xi_4}
  {p^4(p-1)}  .
\end{align*}
Replacing each $\xi$ by $\xi'$ (see Definition~\ref{def:nuprime})  
we have
\begin{align*}
\varepsilon_1 &= \frac{s_0 + s_{11} \xi'_{11} + s_{13} \xi'_{13} 
                  + s_3 \xi'_3 + s_4 \xi'_4 + s_5 \xi'_5}
  {p^5}  \\
\varepsilon_2 &= \frac{t_0 + s_{11} \xi'_{11} + s_{13} \xi'_{13} 
                  + s_4 \xi'_4}
  {p^4(p-1)}  .
\end{align*}
Putting together all the equations derived in this section, together
with the previously computed $\xi$'s, we are now
able to solve for $\xi_5$. We find $\xi_5 = f(p)/g(p)$ where
\begin{align*}
f(p) &= 6 p^{18} + 8 p^{17} + 2 p^{16} - 8 p^{15} + 16 p^{14} - 12 p^{13} - 4 p^{12} + 3 p^{11} + 9 p^{10} - 35 p^9 \\ &~\qquad 
   + 8 p^8 - 11 p^7 + 3 p^6 - p^5 + 8 p^4 - 6 p^3 - 4 p^2 + 10 p + 8, \\
g(p) &= 8 (p+1) (p^9-1) (p^8-1).
\end{align*}

\section{Relation to binary quartics}
\label{sec:quartics}

We compute some of the probabilities required in Section~\ref{sec:mainpf}
by reducing them to probabilities already computed in
\cite{BCF-binaryquartic}. The basic idea is that a $(2,2)$-form
determines a binary quartic form, by writing the $(2,2)$-form as a binary
quadratic form in $Y_0,Y_1$ and taking the discriminant.
However, since we also want results in the case $p=2$, we will in 
fact work with generalised binary quartics, defined as follows.

\begin{definition}
A {\em generalised binary quartic} $(G_2,G_4)$ is a pair of binary forms
of degrees 2 and 4. A generalised binary quartic $(G_2,G_4)$ is {\em soluble} 
over a field $K$ if for some $X_0, X_1, Z \in K$ with $(X_0,X_1) \not= (0,0)$ 
we have $Z^2 + G_2(X_0,X_1) Z = G_4(X_0,X_1)$.
\end{definition}

We write $\Z_p[X_0,X_1] = \oplus_d \Z_p[X_0,X_1]_d$ 
and $\Z_p[X_0,X_1;Y_0,Y_1] = \oplus_{d,e} \Z_p[X_0,X_1;Y_0,Y_1]_{de}$ 
for the gradings of these rings by degree $d$ and by bidegree $(d,e)$, respectively.

\begin{lemma}
\label{lem:BCF1}
Let $\ell,a \in \F_p$ such that $Z^2 + \ell Z - a$ is irreducible over $\F_p$, and let
\begin{align*}
\calS &:= \{ (G_2,G_4) \in \Z_p[X_0,X_1]_2 \times \Z_p[X_0,X_1]_4 : 
G_2 \equiv \ell X_0^2 \!\!\!\pmod{p} \text{ and } G_4 \equiv a X_0^4 \!\!\!\pmod{p} \}, \\
\calT &:= \{ (G_2,G_4) \in \Z_p[X_0,X_1]_2 \times \Z_p[X_0,X_1]_4 : 
G_2 \equiv 0 \!\!\!\pmod{p} \text{ and } G_4 \equiv 0 \!\!\!\pmod{p} \}, 
\end{align*}
and $\calT^* := \{ (G_2,G_4) \in \calT : G_4(0,1) \not\equiv 0 \pmod{p^2} \}$.
Then the proportions of generalised binary quartics in $\calS$, 
$\calT$ and $\calT^*$
that are soluble over $\Q_p$ are, respectively,
\[ \sigma = \frac{2 p^{10} + 3 p^9 - p^5 
          + 2 p^4 - 2 p^2 - 3 p - 1}{2 (p + 1)^2(p^9 - 1)}, \]
\[ \tau = \frac{5 p^{10} + 8 p^9 + p^8 - p^7 + 2 p^6 
              - 3 p^5 + 4 p^3 - 10 p - 6}{8 (p+1) (p^9-1)}, \]
and
\[ \tau^* = \frac{5 p^{10} + 5 p^9 - p^7 + 3 p^6
                  - 4 p^5 + 4 p^3 - 8 p - 4}{8 (p+1)(p^9-1)}. \]

\end{lemma}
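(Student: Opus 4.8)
The plan is to reduce all three densities to the $p$-adic solubility calculations for genus-one curves carried out in \cite{BCF-binaryquartic}, via the dictionary that a generalised binary quartic $(G_2,G_4)$ is soluble over $\Q_p$ exactly when the curve $Z^2 + G_2(X_0,X_1)Z = G_4(X_0,X_1)$ has a $\Q_p$-point. For $p$ odd this is equivalent, after completing the square, to $\Q_p$-solubility of $y^2 = \disc$ with $\disc := G_2^2 + 4G_4$ the associated binary quartic; the generalised formulation is introduced precisely so that one recursion applies uniformly, including at $p=2$ where one cannot divide by $2$. Under this dictionary I would identify the three sets with specific reduction types: on $\calS$ one has $\disc \equiv (\ell^2+4a)X_0^4 \pmod p$ with $\ell^2+4a$ a nonsquare, so the reduced curve is a node at $((0{:}1),Z{=}0)$ whose two branches are conjugate over $\F_{p^2}$; on $\calT$ one has $G_2\equiv G_4\equiv 0 \pmod p$ (the ``primitive part divisible by $p$'' type); and $\calT^*$ is the refinement of $\calT$ with $v(G_4(0,1))=1$.

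Next I would run, for each of $\calS$, $\calT$, $\calT^*$, the same condition-then-rescale template used in Section~\ref{sec:mainpf}: condition on the reduction mod $p$, dispose of the Hensel-liftable part, then rescale the degenerate part to produce a self-referential system. For $\calS$ the only $\F_p$-point is the node, so every $\Q_p$-point reduces to it; it lifts precisely when $Z^2 + G_2(0,1)Z = G_4(0,1)$ is solvable, which forces $v(G_4(0,1))$ even, and otherwise one substitutes $X_0 \leftarrow pX_0$ and divides out to return to a form of controlled reduction type. For $\calT$ one writes $(G_2,G_4)=(pH_2,pH_4)$ and tracks solubility through the valuations of $Z$ and of the discriminant, the leading case ($v$ of the relevant coefficient equal to $1$) being settled by Hensel after one rescaling and the deeper cases looping back to $\tau$ and to the $\calT^*$-density. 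Since these rescalings carry each type into combinations of the same three types plus Hensel-liftable forms, the upshot is a small coupled linear system in $\sigma$, $\tau$, $\tau^*$, with the generic solubility density of \cite{BCF-binaryquartic} entering as a known inhomogeneous input; solving and simplifying then yields the displayed rational functions. As consistency checks I would verify that the resulting formulas reproduce exactly the values $\xi_{13}=\sigma$ and $\xi_4=\tau$ and the datum used to compute $\xi'_4$ in Section~\ref{sec:mainpf}, and that the various partial densities sum correctly (in the spirit of the count totals verified after Lemmas~\ref{count:irred} and~\ref{lem:count1}).

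The hard part will be pinning down the precise dictionary between the congruence conditions defining $\calS$, $\calT$, $\calT^*$ and the reduction ``states'' of the recursion in \cite{BCF-binaryquartic}, and checking that each rescaling substitution returns one to exactly those states rather than to a neighbouring congruence class. Two points demand particular care. First, the case $p=2$: here solubility of $Z^2 + G_2 Z = G_4$ is genuinely finer than that of $y^2 = \disc$, and the discriminant must be carried with the correct powers of $2$ (as flagged at the start of Section~\ref{sec:quartics}); getting these normalisations right is what makes the single formula valid at all primes. Second, the condition $G_4(0,1)\not\equiv 0 \pmod{p^2}$ defining $\calT^*$ must be shown to be exactly the extra local datum that separates $\tau^*$ from $\tau$ in the recursion, so that the two densities can be solved for simultaneously. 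Once this bookkeeping is settled, the remaining work—assembling and solving the linear system and simplifying to the stated rational functions—is routine, if lengthy, algebra.
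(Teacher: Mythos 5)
The paper and your proposal part ways at the very first step: the paper sets up no recursion at all for these three densities. Its proof of Lemma~\ref{lem:BCF1} is a one-line identification: the sets $\calS$, $\calT$, $\calT^*$ are precisely congruence classes whose solubility densities were already computed in \cite{BCF-binaryquartic} --- $\sigma$ is the quantity denoted $\alpha_4^-$ there (their Lemma 2.12), and $\tau$, $\tau^*$ are the quantities $\sigma_4$ and $\beta_4^0$ from their Section 2.3. The generalised-binary-quartic formalism, including the $p=2$ normalisation you rightly flag, is itself taken from that paper, which is why the values transfer verbatim. What you propose instead is to reconstruct those computations from scratch by a condition-and-rescale recursion. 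That route is viable in principle --- it is essentially how \cite{BCF-binaryquartic} obtained the formulas in the first place --- but it amounts to redoing a substantial portion of that paper rather than quoting it, and your sketch defers exactly the parts where all the content lies.

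Moreover, as written the sketch has concrete gaps. First, the recursion does not close on the three unknowns $\sigma$, $\tau$, $\tau^*$. For $\calT$, any solution with $(X_0,X_1)$ primitive forces $Z = pW$, and after dividing by $p$ one needs $H_4 := G_4/p$ to vanish mod $p$ at the point; conditioning on the factorization type of the random binary quartic $\overline{H}_4$ (simple root, double root, triple or quadruple root, irreducible quadratic factors, identically zero) brings in the lifting density for \emph{each} of these types --- the full family $\alpha_d^{\pm}$, $\beta_d^{\epsilon}$ of \cite{BCF-binaryquartic}, together with the unconditioned density --- not a single ``generic'' inhomogeneous input. So either you import all of those values from \cite{BCF-binaryquartic} (at which point you are back to the paper's citation), or your linear system is far larger than advertised and must be derived in full. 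Second, your claim that in case $\calS$ the node ``lifts precisely when $Z^2+G_2(0,1)Z = G_4(0,1)$ is solvable'' is false: solubility of that fiber quadratic is sufficient but not necessary, since a $\Q_p$-point reducing to the node may have $v(X_0) \ge 1$ with $X_0 \neq 0$; the correct criterion is the recursive one (substitute $X_0 \leftarrow pX_0$, rescale, and re-analyze, which is only possible when $v(G_4(0,1)) \ge 2$), which your next clause gestures at but contradicts. Third, the proposed consistency check --- that the formulas reproduce $\xi_{13}=\sigma$ and $\xi_4=\tau$ --- is circular: in the paper those equalities are \emph{deduced} from Lemma~\ref{lem:BCF1} via the measure-preserving map of Lemma~\ref{lem:measpres}, so they cannot serve as independent confirmation. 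In the end the proposal never actually produces the three displayed rational functions; to be a proof it would need either the full \cite{BCF-binaryquartic}-style system worked out, or the direct identification with $\alpha_4^-$, $\sigma_4$, $\beta_4^0$ that the paper uses.
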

\begin{proof}
These probabilities were computed in \cite{BCF-binaryquartic}.
The probability $\sigma$ was computed following Corollary 18, where it
was denoted $\lambda$. The probabilities $\tau$ and $\tau^*$ were computed in Sections 2.3.6 and
2.3.5, where they were denoted $\sigma_4$ and $\sigma'_4$.
\end{proof}

We define a map
\begin{align*}
\Phi : \qquad \Z_p[X_0,X_1;Y_0,Y_1]_{22} & \to \Z_p[X_0,X_1]_2 \times \Z_p[X_0,X_1]_4 \\
F_0 Y_0^2 + F_1 Y_0 Y_1 + F_2 Y_1^2 & \mapsto (F_1,-F_0 F_2). 
\end{align*}
It is easy to check that a $(2,2)$-form $F$ is soluble over $\Q_p$
if and only if the generalised binary quartic $\Phi(F)$ is soluble over
$\Q_p$. We write $\Phi_p$ for the corresponding map on
forms with coefficients in $\F_p$. 
The cases (i) and (ii) in the following lemma relate to Cases~1(iii)
and~4 in Section~\ref{ss: nosmoothpts}.

\begin{lemma}
\label{lem:measpres}
Let $\overline{F} \in \F_p[X_0,X_1;Y_0,Y_1]_{22}$ take one of the following forms:
\begin{enumerate}
\item $\overline{F} = f(X_0 Y_0, X_0 Y_1 + X_1 Y_0)$ where $f$ is an irreducible 
binary quadratic form,
\item $\overline{F} = (X_0 Y_1 - X_1 Y_0)^2$.
\end{enumerate}
Then $\Phi$ restricts to a measure-preserving map 
\begin{align*} \{ F \in \Z_p[X_0,X_1;Y_0,Y_1]_{22} & : 
  F \equiv \overline{F} \!\!\!\pmod{p} \} \\
& \to \{ G \in \Z_p[X_0,X_1]_2 \times \Z_p[X_0,X_1]_4 
              : G \equiv \Phi_p(\overline{F}) \!\!\!\pmod{p} \}. 
\end{align*}
\end{lemma}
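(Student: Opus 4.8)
The strategy is to separate the two coordinates of $\Phi$. Write $F = F_0 Y_0^2 + F_1 Y_0 Y_1 + F_2 Y_1^2$ with $F_0,F_1,F_2 \in \Z_p[X_0,X_1]_2 \cong \Z_p^3$, and let $\overline{F}_0, \overline{F}_2$ denote the $Y_0^2$- and $Y_1^2$-coefficients of $\overline{F}$, so that $\Phi_p(\overline{F}) = (\overline{F}_1, -\overline{F}_0\overline{F}_2)$. Under the identification of the source with $\Z_p^3 \times \Z_p^3 \times \Z_p^3$ and the target with $\Z_p^3 \times \Z_p^5$, each carrying the product of the normalized Haar measures on the relevant fibers, $\Phi$ factors as the identity on the $F_1$-coordinate times the map $(F_0,F_2) \mapsto -F_0 F_2$; these two pieces involve disjoint sets of coefficients, so the pushforward of the product measure under the product map is the product of the pushforwards. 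The identity factor is measure-preserving, and since negation is an isometry of $\Z_p[X_0,X_1]_4$, the whole lemma reduces to showing that the multiplication map
\[ m \colon \{F_0 \equiv \overline{F}_0\} \times \{F_2 \equiv \overline{F}_2\} \longrightarrow \{H \equiv \overline{F}_0\overline{F}_2\}, \qquad (F_0,F_2) \mapsto F_0 F_2, \]
is measure-preserving, where all congruences are taken modulo $p$.

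First I would check that $\overline{F}_0$ and $\overline{F}_2$ are coprime in $\F_p[X_0,X_1]$, which is the only property of $\overline{F}$ that enters. In case (i) a short computation gives $\overline{F}_0 = f(X_0,X_1)$ and $\overline{F}_2 = c X_0^2$ with $c \neq 0$ (otherwise $f$ would be reducible); as $f$ is irreducible it is coprime to $X_0$, hence to $\overline{F}_2$. In case (ii) one has $\overline{F}_0 = X_1^2$ and $\overline{F}_2 = X_0^2$, which are obviously coprime. Next I would parametrize the fibers: fixing lifts and writing $F_0 = \overline{F}_0 + pU$ and $F_2 = \overline{F}_2 + pV$ with $(U,V) \in \Z_p^3 \times \Z_p^3$ uniform, the product satisfies $F_0 F_2 = \overline{F}_0 \overline{F}_2 + pW$ with
\[ W = \overline{F}_2\,U + \overline{F}_0\,V + p\,UV \in \Z_p[X_0,X_1]_4 \cong \Z_p^5. \]
Under these coordinates the normalized measures correspond to the uniform measures on $(U,V) \in \Z_p^6$ and on $W \in \Z_p^5$, so $m$ is measure-preserving if and only if $W$ is uniformly distributed on $\Z_p^5$ when $(U,V)$ is uniform.

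The crux is that modulo $p$ the map $W$ is the \emph{linear} map $L \colon (U,V) \mapsto \overline{F}_2 U + \overline{F}_0 V$ (the correction $pUV$ drops out), and that $L \colon \F_p^6 \to \F_p^5$ is surjective. Indeed its kernel consists of $(U,V)$ with $\overline{F}_2 U = -\overline{F}_0 V$; coprimality forces $\overline{F}_0 \mid U$, and by degrees $U = \lambda \overline{F}_0$, $V = -\lambda \overline{F}_2$ for a scalar $\lambda$, so $\ker L$ is one-dimensional and $\dim \im L = 5$. Hence every nonempty fiber of $L$ has exactly $p$ points. I would then lift this congruence by congruence: given a solution of $W \equiv \bar y \pmod{p^k}$ with $k \geq 1$ and substituting $U \mapsto U + p^k s$, $V \mapsto V + p^k t$, one finds $W \equiv W(U,V) + p^k(\overline{F}_2 s + \overline{F}_0 t) \pmod{p^{k+1}}$ since all remaining terms are divisible by $p^{k+1}$; the number of lifts $(s,t) \bmod p$ solving the resulting congruence is again $|\ker L| = p$, independently of $\bar y$. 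It follows inductively that $W \equiv \bar y \pmod{p^k}$ has exactly $p^k$ solutions modulo $p^k$ for every $\bar y$, so the $m$-preimage of each radius-$p^{-k}$ ball in the target has measure $p^k \cdot p^{-6k} = p^{-5k}$, equal to the measure of the ball. Thus $W$ is uniform and $m$ is measure-preserving. The only real obstacle is the passage from coprimality of $\overline{F}_0, \overline{F}_2$ to surjectivity of $L$; once that holds the lifting is automatic, precisely because $W$ is linear modulo $p$ at each stage, so no point-counting error terms appear.
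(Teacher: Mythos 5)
Your proof is correct and is in essence the paper's own proof: the paper's entire argument is that the derivative of $\Phi_p$ at $\overline{F}$ is a surjective linear map $\F_p^9 \to \F_p^8$ — which, after splitting off the identity block on the $F_1$-coordinates, is exactly your map $L \colon (U,V) \mapsto \overline{F}_2 U + \overline{F}_0 V$ — with surjectivity asserted without proof. You additionally supply the two details the paper treats as standard: the coprimality/kernel-dimension computation establishing surjectivity of $L$, and the Hensel-style lifting argument showing that a surjective mod-$p$ linearization forces the map to be measure-preserving on residue disks.
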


\begin{proof}
The proof comes down to showing that the derivative of $\Phi_p$ 
at $\overline{F}$ is a surjective linear map $\F_p^9 \to \F_p^8$.
In cases (i) and (ii), this linear map is given by
\begin{align*} 
F_0 Y_0^2 + F_1 Y_0 Y_1 + F_2 Y_1^2 & \mapsto (F_1, - f(0,1) X_0^2 F_0 - f(X_0,X_1) F_2), \textrm{ and}\\
F_0 Y_0^2 + F_1 Y_0 Y_1 + F_2 Y_1^2 & \mapsto (F_1, - X_0^2 F_0 - X_1^2 F_2),
\end{align*} 
respectively, which are both surjective.
\end{proof}

\subsubsection*{Case 1(iii)}  
We use these lemmas to compute $\xi_{13}$, that is, the probability
of solubility where $F$ mod $p$ is of the form indicated 
in Lemma~\ref{lem:measpres}(i). Let $f$ have coefficients $a,b,c$. 
Then $\Phi_p(\overline{F})$ is the generalised binary quartic with
equation 
\[ (Z + c X_0 X_1)^2 + b X_0^2 (Z + c X_0 X_1) + a c X_0^4 = 0. \]
From this we see that $\xi_{13} = \sigma$ as defined in Lemma~\ref{lem:BCF1}.

\subsubsection*{Case 4}  
We use these lemmas to compute $\xi_{4}$, that is, the probability
of solubility where $F$ mod $p$ is of form indicated 
in Lemma~\ref{lem:measpres}(ii). 
Since $\Phi_p(\overline{F})$ is identically zero,
we see that $\xi_{4} = \tau$ as defined in Lemma~\ref{lem:BCF1}.
To compute $\xi'_{4}$ we must consider $(2,2)$-forms $F$ that
additionally satisfy $v(a_{22}) \ge 2$.
Under the measure preserving map in Lemma~\ref{lem:measpres}(ii)
these are mapped to $\calT \setminus \calT^*$. Therefore
$\tau = (1 - 1/p)\tau^*  + (1/p)\xi_4'$ and so
\[ \xi'_4 = p \tau - (p-1) \tau^* = \frac{4 p^{10} + 3 p^9 - p^7 + 2 p^6 - 2 p^5 
            + 2 p^3 - p^2 - 5 p - 2}{4 (p+1) (p^9-1)}.\]

\begin{remark} The same approach could be used to compute $\xi_{11}$, and 
indeed our answer agrees with \cite{BCF-binaryquartic}*{Lemma 15}.
\end{remark}

\section{Connections to the Hasse principle}
\label{sec:exp}

In Theorem~\ref{thm:global},
we determined that the proportion of $(2,2)$-forms 
that are everywhere locally soluble is $c \approx 0.8739$.
As we explain further in \S\S \ref{sec:exp-res} and \ref{sec:marked},
a heuristic similar to \cite[Conjectures 6 and 7]{manjul-hassecubic}
predicts that the proportion of everywhere locally soluble $(2,2)$-forms 
that are globally soluble is $\frac{1}{4}$, i.e., in the notation of \S \ref{S: PoonenVoloch},
that $\lim_{H \to \infty} \frac{N(H)}{\Nloc(H)} = \frac{1}{4}$
and thus $\lim_{H \to \infty} \frac{N(H)}{\Ntot(H)} = \frac{1}{4}c$.

In this section we report on some experiments to test this
conjecture numerically. A similar study in the case of plane
cubics was made in \cite{FisherOberwolfach}. With the one exception noted
below, all computations were performed using
Magma \cite{Magma} and the data may be found at \cite{dataexpts}.

\subsection{Experiments and results}
\label{sec:exp-res}
For each $H \in \{10,30,100,300,1000\}$, we chose $1000$ 
$(2,2)$-forms 
(i.e., polynomials of the form~\eqref{eq:bideg22}),
with coefficients chosen
uniformly at random from $[-H,H] \cap \Z$.
The numbers of these
that were soluble or everywhere locally soluble (ELS) were
as follows:
\[
\renewcommand{\arraystretch}{1.2}
\begin{array}{rccc} 
& \text{initial range} & \text{improved range} & \\
\multicolumn{1}{c}{H} & \sol & \sol & \els \\  \hline
10   &  [753, 755]  &     753     & 885 \\
30   &  [640, 652]  &     642     & 885 \\
100  &  [536, 582]  &     549     & 875 \\
300  &  [378, 502]  &  [432, 433] & 867 \\
1000 &  [275, 464]  &  [357, 464] & 879 
\end{array}
\]
The second column gives our initial estimate for
the number of soluble $(2,2)$-forms out of the $1000$. The lower
bound was obtained by searching for rational solutions, with
the assistance of 4-descent 
in Magma. 
The upper bound was obtained by computing the 
Cassels-Tate pairing on the $2$-Selmer group of the Jacobian.
For the improved estimates in the third column we used a 
range of methods, described more fully below, that are conditional
on standard conjectures and sometimes were only practical 
for $H$ sufficiently small.

For the first 4 experiments, we also give the breakdown of these
totals by the rank of the Jacobian elliptic curve $E/\Q$. 
The annotations $+$ and $-$ in the case $H = 300$ indicate
that we should add or subtract one if the remaining form
whose solubility has not yet been decided turns out to be soluble.
\[
\renewcommand{\arraystretch}{1.2}
 \begin{array}{cc|cccccccc|c} 
& \rank E(\Q) & {\rm singular} & 0 & 1 & 2 & 3 & 4 & 5 & 6 & {\rm Total} \\ 
\hline

H = 10
& \els & 5 & 0 & 86 & 344 & 313 & 116 & 21 & 0 & 885 \\
& \sol & 5 & 0 & 16 & 285 & 310 & 116 & 21 & 0 & 753 \\[5pt]

H = 30
& \els & 0 & 0 & 122 & 310 & 291 & 129 & 29 & 4 & 885 \\
& \sol & 0 & 0 & 1 & 208 & 274 & 126 & 29 & 4 & 642 \\[5pt]

H = 100
& \els & 0 & 0 & 171 & 321 & 257 & 96 & 25 & 5 & 875 \\
& \sol & 0 & 0 & 0 & 205 & 221 & 93 & 25 & 5 & 549 \\[5pt]

H = 300 
& \els & 0 & 0 & \,\,\,230-\! & 373 
                & \,\,\,187+\! & 58 & 19 & 0 & 867 \\
& \sol & 0 & 0 & 0 & 210 & \,\,\,151+\! & 52 
                & 19 & 0 & \,\,\,432+\! 
\end{array} \]

Although we can see from our first table that the proportion of
everywhere locally soluble forms that are globally soluble is
decreasing with $H$, this hardly amounts to strong evidence that the
limit is $1/4$. However, the prediction of $1/4$ 
arises since, in the limit, it is expected that (i)
$50\%$ of the Jacobians have rank
1 and $50\%$ have rank 2 (by, e.g., the Minimalist Conjecture), and (ii) the proportions of forms 
that are soluble in these two cases are 
$0$ and $1/2$, respectively (as explained below).  Our second table therefore provides much stronger
evidence for the conjecture, and indeed we see that
the convergence in (ii) is happening much faster than that in (i).

\subsection{The marked point and heuristics}
\label{sec:marked}
As we saw in Section~\ref{sec:quartics}, if 
$F \in \Z[X_0,X_1;Y_0,Y_1]$ is a $(2,2)$-form,
then it determines a pair of binary quartics.
These binary quartics have the same discriminant, which is accordingly
called the discriminant of $F$.  We should expect a randomly chosen
$(2,2)$-form to have nonzero discriminant (and hence define a smooth
curve). This was true in all our experiments, except for $5$ cases
with $H = 10$, which were all in any case soluble. From now on we
assume that the discriminant is nonzero, and write $E$ for the
Jacobian of $C = \{F = 0\} \subset \PP^1 \times \PP^1$. Since the
discriminant is a degree $12$ polynomial in the coefficients of $F$,
the conductor and discriminant of $E$ each have size about
$H^{12}$.

There are two maps $C \to \PP^1$,
given by projection to each factor, and the difference of fibres
is a nonzero marked point $P_0 \in E(\Q)$. There is an explicit formula for $P_0$ 
(see \cite[Section 6.1.2]{BhargavaHo} or \cite[Lemma 2.1]{FisherRadicevic}) 
in terms of the coefficients of $F$. As might be predicted from
this formula, we found in our experiments that $P_0$ had canonical
height at most $\log(2H^2)$. The torsion subgroup of $E(\Q)$ was 
trivial in all but $6$ cases with $H = 10$, when it had order $2$.
In only one of these cases was $P_0$ a torsion point.

We should expect that for a randomly chosen $(2,2)$-form, 
the associated binary quartics should not have any rational roots
(i.e., linear factors). This was true in all but $69$, $9$ and $1$ 
of our examples with $H = 10, 30$ and $100$. We should also
expect that $P_0 \notin 2 E(\Q)$, and this was true in all but
$6$ cases when $H = 10$, and $2$ cases when $H = 30$.
If $P_0 \notin 2 E(\Q)$ and the associated binary quartics do not have
any rational roots, then if $\rank E(\Q) = 1$, the $(2,2)$-form is not soluble.
We thus expect that the $(2,2)$-form is not soluble in general if $\rank E(\Q) = 1$.

We now explain why half of the forms with rank $2$ Jacobian 
are expected to be soluble.
For an elliptic curve $E$ of rank $2$, we want to estimate the proportion
of elements in $\Sel^2(E/\Q)$ that are in the image of $E(\Q)/2E(\Q)$. The average
size of $\Sel^2(E/\Q)$ is $6$ in this family \cite{BH-counting}, but two of the $2$-Selmer group elements
correspond to $(2,2)$-forms for which one of the associated binary quartics
has a rational root, which should only happen 0\% of the time when we order
by height. Now the size of $E(\Q)/2E(\Q)$ is 4 for the 100\% of elliptic curves $E$
for which there is no $2$-torsion, but we also subtract the same $2$ elements corresponding
to the $(2,2)$-forms for which one of the associated binary quartics
has a rational root. We thus predict that $(4-2)/(6-2) = 1/2$ of the forms are soluble.

\begin{remark} Although one of the best methods for finding generators of large height on an
elliptic curve $E/\Q$ is to use Heegner points, this only works for curves of rank $1$.
Since our elliptic curves all come with a known point of infinite order, 
this method was of no use to us. We instead relied almost exclusively on descent methods.
\end{remark}

\subsection{The initial estimates: computing ranks of elliptic curves}
The curve defined by a $(2,2)$-form is isomorphic to the curve
defined by either of the associated binary quartics. Our interest
is therefore in deciding the solubility of the genus one curves
associated to binary quartics.

Let $C/\Q$ be a genus one curve defined by a binary quartic, 
and let $E/\Q$ be its Jacobian. If $C$ 
is everywhere locally soluble, then it defines a class $[C]$ 
in the $2$-Selmer group $\Sel^2(E/\Q)$. Moreover $C(\Q) \not=
\emptyset$ if and only if $[C] \in \im(\delta)$ 
where $\delta$ in the connecting map in the Kummer exact sequence
\[  0 \ra E(\Q)/2E(\Q) \stackrel{\delta}{\ra} \Sel^2(E/\Q) 
\ra \Sha(E/\Q)[2] \to 0. \]
Given a point $P \in E(\Q)$ the Magma function {\tt GenusOneModel(2,P)}
computes a binary quartic representing $\delta(P)$.
In conjunction
with the function {\tt IsEquivalent} for testing equivalence of binary
quartics, this gives a convenient way of reducing the problem of
deciding whether $C(\Q) \not= \emptyset$ to that of finding generators
for $E(\Q)$. 

An initial upper bound for the rank of $E(\Q)$ is
obtained by $2$-descent, that is, by 
computing the 2-Selmer group $\Sel^2(E/\Q)$. 
This upper bound can sometimes
be improved by computing the Cassels-Tate pairing.
Let $S_n$ be the image of the natural map $\Sel^{2^n}(E/\Q) \to \Sel^2(E/\Q)$.
If $\xi,\eta \in S_n$, say with $\xi' \mapsto \xi$ and $\eta' \mapsto \eta$,
then there is an alternating pairing 
\begin{equation}
\label{ctpn}
 \langle~,~\rangle_n :  
  S_n \times S_n \to \F_2 \, ; \quad (\xi,\eta) \mapsto \langle \xi', \eta
\rangle_{\rm CT} = \langle \xi, \eta' \rangle_{\rm CT} 
\end{equation}
whose kernel is $S_{n+1}$. We note the inclusions of $\F_2$-vector spaces
\[  \im(\delta) \subset \ldots \subset 
    S_3 \subset S_2 \subset S_1 = \Sel^2(E/\Q). \]
The function {\tt CasselsTatePairing} in Magma, written by S. Donnelly,
computes the pairings 
\begin{equation}
\label{ctp22}
 \langle~,~\rangle_{\rm CT} : \Sel^2(E/\Q) \times \Sel^2(E/\Q) \to \F_2, 
\end{equation}
and 
\begin{equation}
\label{ctp24}
 \langle~,~\rangle_{\rm CT} : \Sel^2(E/\Q) \times \Sel^4(E/\Q) \to \F_2, 
\end{equation}
taking as input either a pair of binary quartics, or a binary
quartic and a quadric intersection.
(A variant of his method for computing~\eqref{ctp22} is described
in \cite{FisherCTP}.) We may thus 
compute the pairing~\eqref{ctpn} for $n=1$ and $n=2$.

A lower bound for the rank of $E(\Q)$ may be obtained by searching
for points either directly on $E$, or better on one of its 
$4$-coverings as computed using {\tt FourDescent} in Magma.
In this way we obtained generators for a subgroup
$\Gamma \subset E(\Q)$ of known points.
In all cases where it is possible that $\rank \Gamma < \rank E(\Q)$ 
we searched up to height $10^{10}$ on the $4$-coverings.

Our initial (unconditional) estimate on the number of $(2,2)$-forms
that are soluble was obtained as follows. First, if the curve
$C$ defined by our $(2,2)$-form is not everywhere locally soluble, then it
is obviously not soluble. Otherwise $C$ (or more precisely one of
the associated binary quartics) determines a class 
$[C] \in \Sel^2(E/\Q)$. If $[C] \in \delta(\Gamma)$ then we 
know that $C(\Q) \not= \emptyset$, and indeed from the generators
for $\Gamma$ we may compute 
an explicit solution. Otherwise we look for $[D]  \in \Sel^2(E/\Q)$ with 
$\langle [C], [D] \rangle_{\rm CT} \not= 0$. If we succeed in finding
a binary quartic $D$ with these properties, then it 
is a witness to the fact that $C(\Q) = \emptyset$. 

\subsection{The improved estimates}
Both the improved estimates, and the second table (giving the
breakdown by rank of the Jacobian) are conditional on the following
two standard conjectures.
\begin{itemize}
\item Parity conjecture: This is the parity part of the Birch--Swinnerton-Dyer
conjecture, i.e., the Mordell-Weil rank of an elliptic curve
$E/\Q$ is even or odd according as its root number $w(E/\Q)$ is $+1$ or $-1$.
\item Generalised Riemann Hypothesis (GRH) : This is needed 
for the class number calculations for $2$-descent (but 
would be easy to remove for small $H$) and for the computation of 
analytic rank bounds.
\end{itemize}

For ease of exposition, we will assume (as is the case in all examples
of interest) that $E(\Q)$ has trivial torsion subgroup. We
write $r_{2^n} = \dim_{\F_2} S_n$ for the upper bound on the
rank of $E(\Q)$ obtained by $2^n$-descent. Thus we have
\[  \rank \Gamma \le \rank E(\Q) \le \ldots \le r_8 \le r_4 \le r_2. \]
It is a theorem, originally due to Monsky \cite{Monsky}, 
that $w(E/\Q) = (-1)^{r_2}$.

We improve our lower bounds on the number of forms that are soluble
by using the parity conjecture. 
Indeed, if $\rank \Gamma = r_{2^n} - 1$ for some $n$, then we may conclude
by the parity conjecture that $\im(\delta) = S_n$. We mainly used
this idea with $n=1$, when the conclusion is that 
binary quartics with Jacobian $E$ satisfy the Hasse principle,
but also used it with $n=2$ in four examples with $H = 1000$.

In the two examples where we used the parity conjecture in the
case $H = 30$, we were also able to find the missing generators
using {\tt EightDescent} in Magma \cite{StammingerThesis}. 
In particular on the curve $\{F = 0\} \subset \PP^1 \times \PP^1$ where
\begin{align*} 
 F &= 27 X_0^2 Y_0^2 - X_0 X_1 Y_0^2 + 17 X_1^2 Y_0^2 - 27 X_0^2 Y_0 Y_1 
    + 15 X_0 X_1 Y_0 Y_1 + 9 X_1^2 Y_0 Y_1 \\ & \qquad \qquad  - 25 X_0^2 Y_1^2 
    - 12 X_0 X_1 Y_1^2 - 13 X_1^2 Y_1^2, 
\end{align*}
we found the solution
\begin{align*}
X_0 &= 5998800628516423107297133082973646629266881508307007941326966876023, \\
X_1 &= 342294900150114936634770190317380320064921533929615189995360150770683, \\
Y_0 &= 246468494594162038245191010835877699291643209107952263886240062422805, \\
Y_1 &= -206172926328604047309514129427033995615708844556361901784128916991449.
\end{align*}
This maps to a point on the Jacobian of canonical height 
$\widehat{h} \approx 644.736$,
% 644.735879151398636278354058708
which is well beyond the range that could be found by $4$-descent.
Unfortunately it was not practical to run {\tt EightDescent} in 
the experiments with $H=100,300$ and $1000$, and so our improved lower bounds
in those cases remain conditional on the parity conjecture.

The main method we used to improve the upper bounds on the number 
of forms that are soluble was to compute the Cassels-Tate 
pairing~\eqref{ctp24}.
In the experiments with $H = 10,30, 100$ and $300$, we were left with $0,1,2$ 
and $5$ examples where $r_2 = r_4 = r_8 = 3$, yet (despite searching
on all $4$-coverings up to height bound $10^{10}$) we could only find
one generator. The elliptic curves in question are recorded in the
following table.
\begin{align*}
 y^2 &= x^3 - 385216 x - 118546643  & \Delta = 2.0 \\
 y^2 + x y &= x^3 - x^2 - 21940631 x - 10062163381  & \Delta = 2.6 \\
 y^2 + x y &= x^3 - x^2 - 130106786 x - 418444299752  & \Delta = 2.5 \\
 y^2 &= x^3 + x^2 - 674939767 x + 9768411280745  & \Delta = 3.5 \\
 y^2 + x y + y &= x^3 + 1365438724 x + 1088450102306  & \Delta = 3.0 \\
 y^2 + x y &= x^3 + x^2 + 13646956 x + 36868880351052  & \Delta = 3.6 \\
 y^2 + y &= x^3 + 463718380 x - 1653282652263  & \Delta = 3.9 \\
 y^2 + x y &= x^3 + x^2 - 6811523942 x + 180704627470189  & 
\end{align*}

In all but the last of these examples, we were able to prove 
that the rank is $1$
by using Sage \cite{SageMath} to compute an upper bound on the analytic rank.
The parameter $\Delta$ we used for this calculation (see \cite{Bober}) is 
recorded in the right hand column. 
In the last example we obtained
no rank bound better than $3$, despite taking $\Delta = 4.0$.

Unfortunately it was not practical to compute the pairing~\eqref{ctp24}  
in the experiment with $H = 1000$. So we are left with a large 
number of unresolved cases. Writing $t = \rank \Gamma$ 
for the number of generators
known, there were 90 cases with $(r_2,r_4,t) = ( 3, 3, 1 )$,
14 cases with $(r_2,r_4,t) = ( 4, 4, 2 )$, and one each with
$(r_2,r_4,t) = (4,4,1)$, $(5,3,1)$ and $(5, 5, 3)$.

\bibliography{hasse22}
\bibliographystyle{amsalpha}

\end{document}